\newtheorem{prop}{Proposition}[section]
\newtheorem{thm}[prop]{Theorem}
\newtheorem{defn}[prop]{Definition}
\newtheorem{cor}[prop]{Corollary}
\newtheorem{rem}[prop]{Remark}
\newtheorem{lem}[prop]{Lemma}
\numberwithin{equation}{section}
\begin{document}

\title{Local Homology with Respect to a Pair of Ideals}
\author{V.H. Jorge P\'erez$^{1,}\,$\thanks{Universidade de S{\~a}o Paulo -
ICMC, Caixa Postal 668, 13560-970, S{\~a}o Carlos-SP, Brazil ({\it
e-mail: vhjperez@icmc.usp.br}) 2000 Mathematics Subject Classification: 13D45; 16E30; 13J99. {\it Key words}: Inverse limit; Local homology; Local cohomology; Linearly compact module.}\,\,\,\,and\,\,\,C.H. Tognon$^{2,}$ \thanks{Universidade de S{\~a}o Paulo -
ICMC, Caixa Postal 668, 13560-970, S{\~a}o Carlos-SP, Brazil ({\it
e-mail: tognon@icmc.usp.br}).}}

\date{}
\maketitle

\vspace{0.3cm}
\begin{abstract}
We introduce a generalization of the notion of local homology module, which we call a local homology module with respect to a pair of ideals $\left(I,J\right)$, and study its various properties such as vanishing, co-support and co-associated. We also discuss its connection with ordinary local homology.
\end{abstract}

\maketitle
\section{Introduction}
Throughout this paper, $R$ is a commutative ring with non-zero identity. For a $R$-module $M$ and a ideal $I$ of ring $R$ there are two important functors in commutative algebra and algebraic geometry which are the $I$-torsion functor $\Gamma_{I}\left(\bullet\right)$ and the $I$-adic completion functor $\Lambda_{I}\left(\bullet\right)$ defined by $\Gamma_{I}\left(M\right) = \bigcup_{t > 0} \left(0:_{M} I^{t}\right)$ and $\Lambda_{I}\left(M\right) = \varprojlim_{t \in \mathbb{N}} M/I^{t}M$. It should be noted that the $I$-torsion functor $\Gamma_{I}\left(\bullet\right)$ is left exact and its $i$th right derived functor ${\rm H}^{i}_{I}\left(\bullet\right)$ is called the $i$th local cohomology functor with respect to $I$. However, the $I$-adic completion functor $\Lambda_{I}\left(\bullet\right)$ is neither right nor left exact, so computing its left derived functors is in general difficult.

The local cohomology theory of Grothendieck has proved to be an important tool in algebraic geometry, commutative algebra and algebraic topology. Its dual theory of local homology is also studied by many mathematicians: Greenlees and May \cite{may}, Tarr\'io \cite{tarrio}, and Cuong and Nam \cite{completamento I-adico}, etc. In \cite{grothendieck}, we have that Grothendieck introduced the definition of local cohomology module. Let $\mathfrak{a}$ be an ideal of $R$, and $M$ be an $R$-module, then the module ${\rm H}^{i}_{\mathfrak{a}}\left(M\right) = \varinjlim_{t \in \mathbb{N}}{\rm Ext}^{i}_{R}\left(R/\mathfrak{a}^{t},M\right)$ is called the $i$-th local cohomology module of $M$ with respect to $\mathfrak{a}$.

In this paper, we define the local homology module with respect to a pair of ideals $\left(I,J\right)$ which is in some sense dual to \cite{cohomologia local de um par de ideais} local cohomology defined by a pair of ideals. We have studied local homology module defined by a pair of ideals for linearly compact modules. It should be mentioned that the class of linearly compact modules is great, it contains important classes of modules in algebra. For example, artinian modules are linearly compact and discrete \cite[$3.10$]{macdonald}. Moreover, if $R$ is a complete local Noetherian ring and $M$ is a finitely generated $R$-module, then $M$ is semidiscrete (that means every submodules of M is closed) and linearly compact \cite[$7.3$]{macdonald}.

Section $2$ is devoted to recall some definitions, results obtained for the functor of local homology $\Lambda_{I,J}\left(-\right)$ defined by a pair of ideals $\left(I,J\right)$ and to prove results for the local homology module ${\rm H}_{i}^{I,J}\left(M\right)$ defined by a pair of ideals when $M$ is a linearly compact $R$-module. In section $3$ put some results on vanishing of local homology module with respect to a pair of ideals.

In the last section, we define an $I$-stable module as a module $M$ such that for each element $x \in I$ there is a positive integer $n$ such that $x^{t}M = x^{n}M$ for all $t \geq n$. The Theorem \ref{sequencia exata de linearmente compacto} provides a long exact sequence in local homology modules with respect to the pair of ideals $\left(I,J\right)$, and to finish we have the Theorem \ref{finitude}, which shows that, being $i$ be a non-negative integer, if ${\rm H}_{j}^{I,J}\left(M\right)$ is $\mathfrak{a}$-stable, for all $j < i$, and $G$ is a closed $R$-submodule of ${\rm H}_{i}^{I,J}\left(M\right)$ such that ${\rm H}_{i}^{I,J}\left(M\right)/G$ is $\mathfrak{a}$-stable then the set ${\rm Coass}_{R}\left(G\right)$ is finite where $R$ is a Noetherian ring and $M$ an $\mathfrak{a}$-stable semidiscrete linearly compact $R$-module.

\section{Definition and basic properties}

\begin{defn}\label{funtor} {\rm (\cite[Section $2$]{completamento I-adico}) Let $R$ be a ring (it is not assumed here that $R$ is Noetherian) and $I$ an ideal of $R$. Let $M$ be an $R$-module. Consider the inverse system of $R$-modules $\left\{M/I^{t}M\right\}_{t \in \mathbb{N}}$ with natural epimorphisms
\begin{center}
$\pi_{k,t}: M/I^{k}M \longrightarrow M/I^{t}M$, for all $0 < t \leq k$, $t,k \in \mathbb{N}$.
\end{center}
We use $\Lambda_{I}\left(M\right) = \varprojlim_{t \in \mathbb{N}} M/I^{t}M$ to denote the $I$-adic completion of $M$. It is known that the functor of the $I$-adic completion $\Lambda_{I}\left(-\right)$ is an additive covariant functor from the category of $R$-modules and $R$-homomorphisms to itself. We denote by $\mathfrak{L}_{i}^{I}\left(M\right)$ the $i$th left derived module of $\Lambda_{I}\left(M\right)$. Since the tensor functor is not left exact and the inverse limit is not right exact on the category of $R$-modules, the functor $\Lambda_{I}\left(-\right)$ is neither left nor right exact.}
\end{defn}

\begin{defn}\label{funtor de dois ideais} {\rm (\cite[Definition $3.1$]{cohomologia local de um par de ideais}) Let $\tilde{W}\left(I,J\right)$ denote the set of ideals $\mathfrak{a}$ of ring $R$ such that $I^{n}\subseteq \mathfrak{a} + J$ for some integer $n \geq 1$, i.e.,
\begin{center}
$\tilde{W}\left(I,J\right) := \left\{\mathfrak{a}\subset R \,|\, \mathfrak{a} \ \textrm{is ideal of} \ R \ \textrm{and} \ I^{n}\subseteq \mathfrak{a} + J \ \textrm{for some} \ n \in \mathbb{N}\right\}$.
\end{center}
We define a partial order on $\tilde{W}\left(I,J\right)$ by letting $\mathfrak{a} \leq \mathfrak{b}$ if and only if $\mathfrak{b}\subseteq \mathfrak{a}$, for $\mathfrak{a}, \mathfrak{b} \in \tilde{W}\left(I,J\right)$. With this relation of partial order we have that $\tilde{W}\left(I,J\right)$ is a directed set. Moreover, the family of $R$-modules $\left\{\Lambda_{\mathfrak{a}}\left(M\right)\right\}_{\mathfrak{a} \in \tilde{W}\left(I,J\right)}$ is an inverse system of $R$-modules. We define then,
\begin{center}
$\Lambda_{I,J}\left(M\right) := \varprojlim_{\mathfrak{a} \in \tilde{W}\left(I,J\right)} \Lambda_{\mathfrak{a}}\left(M\right) = \varprojlim_{\mathfrak{a} \in \tilde{W}\left(I,J\right)} \varprojlim_{n \in \mathbb{N}} M/\mathfrak{a}^{n}M$.
\end{center}}
\end{defn}

\begin{rem} {\rm $\left(i\right)$ For $\mathfrak{a} \in \tilde{W}\left(I,J\right)$, since the functor $\Lambda_{\mathfrak{a}}\left(-\right)$ is neither left nor right exact and the inverse limit is not right exact on the category of $R$-modules, we have that the functor $\Lambda_{I,J}\left(-\right)$ is neither left nor right exact. Moreover, the functor of the $\mathfrak{a}$-adic completion $\Lambda_{\mathfrak{a}}\left(-\right)$ is an covariant functor and $\varprojlim_{\mathfrak{a} \in \tilde{W}\left(I,J\right)}$ is also an covariant functor; thus, $\Lambda_{I,J}\left(-\right)$ is an covariant functor from the category of $R$-modules and $R$-homomorphisms to itself.\newline
$\left(ii\right)$ We denote by $\mathfrak{L}_{i}^{I,J}\left(M\right)$ the $i$th left derived module of $\Lambda_{I,J}\left(M\right)$. So, as $\Lambda_{I,J}\left(-\right)$ is neither left nor right exact, it follows that, in general, $\mathfrak{L}_{0}^{I,J} \neq \Lambda_{I,J}$. However, $\mathfrak{L}_{0}^{I,J}$ is a right exact functor and its left derived functors for $i > 0$ are the same as those of $\Lambda_{I,J}$.}
\end{rem}

Now, let $M$ be a $R$-module and $N$ be a submodule of $M$. For $m \in M$, we define a subset of $M$: $m + N = \left\{m + n \;|\; n \in N\right\}$. A subset of $M$ is said to be a coset of $N$ if there exists $m \in M$ such that it is equal to $m + N$ \cite[Definition $5$]{cosets}. Moreover, $x \in m + N$ if and only if there exists $n$ such that $n \in N$ and $x = m + n$.

Consider that the ring $R$ is Noetherian and has a topological structure. Let us recall the concept of linearly compact modules by terminology of Macdonald \cite[Definition $3.1$]{macdonald}. Let $M$ be a topological $R$-module. A nucleus of $M$ is a neighbourhood of the zero element of $M$, and a nuclear base of $M$ is a base for the nuclei of $M$. $M$ is Hausdorff if and only if the intersection of all the nuclei of M is $0$. It is said to be linearly topologized if $M$ has a nuclear base $\Sigma$ consisting of submodules. A Hausdorff linearly topologized $R$-module $M$ is said to be linearly compact if $M$ has the following property: if $\mathfrak{F}$ is a family of closed cosets (i.e., cosets of closed submodules) in $M$ which has the finite intersection property, then the cosets in $\mathfrak{F}$ have a non-empty intersection. It should be noted that an Artinian $R$-module is linearly compact with the discrete topology \cite[Theorem $2.1$]{modulos linearmente compactos sobre aneis noetherianos}. A Hausdorff linearly topologized $R$-module $M$ is called {\it semidiscrete} if every submodule of $M$ is closed. The class of semidiscrete linearly compact modules contains all Artinian modules. For an $R$-module $M$ and a submodule $N \subseteq M$ we define the set $\left(N:_{M} \mathfrak{a}\right) = \left\{m \in M \;|\; \mathfrak{a}m \subseteq N\right\}$. Observe that $\left(N:_{M} \mathfrak{a}\right)$ is a submodule of $M$ and that $N \subseteq \left(N:_{M} \mathfrak{a}\right)$. For an $R$-module $M$, the $\mathfrak{a}$-torsion of $M$ is defined by
\begin{center}
$\Gamma_{\mathfrak{a}}\left(M\right) := \bigcup_{n \in \mathbb{N}} \left(0:_{M} \mathfrak{a}^{n}\right) = \left\{m \in M \;|\; \mathfrak{a}^{n}m = 0, \textrm{for some integer} \ n \geq 1\right\}$.
\end{center}
Observe that $\Gamma_{\mathfrak{a}}\left(M\right)$ is a submodule of $M$.

\begin{defn}\label{categoria nova} {\rm Denote by $\mathfrak{C}_{\Lambda_{I,J}}\left(R\right)$ the category of all $R$-modules $M$ (and homomorphisms) such that $\mathfrak{L}_{0}^{I,J}\left(M\right) \cong \Lambda_{I,J}\left(M\right)$.}
\end{defn}

{\rm The following result is a generalization, to the case of a pair of ideals, of \cite[Corollary $2.5$]{left derived functors}.}

\begin{prop}\label{homomorfismo sobrejetivo} Let $f: M\rightarrow N$ be a homomorphism of $R$-modules, where $M$ is a linearly compact $R$-module. If $f$ is surjective, then the homomorphism $\Lambda_{I,J}\left(f\right): \Lambda_{I,J}\left(M\right)\rightarrow \Lambda_{I,J}\left(N\right)$ is also surjective.
\end{prop}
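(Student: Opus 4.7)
The plan is to reduce the surjectivity claim to the single-ideal case already handled in \cite[Corollary $2.5$]{left derived functors}, and then to transfer that surjectivity through the iterated inverse limit defining $\Lambda_{I,J}$, exploiting the good exactness properties of inverse limits on linearly compact modules.

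First I would fix $\mathfrak{a} \in \tilde{W}\left(I,J\right)$. Since $M$ is linearly compact and $f$ is surjective, the cited Corollary $2.5$ of \cite{left derived functors} gives that $\Lambda_{\mathfrak{a}}\left(f\right) : \Lambda_{\mathfrak{a}}\left(M\right) \to \Lambda_{\mathfrak{a}}\left(N\right)$ is surjective. These maps are natural in $\mathfrak{a}$, so they assemble into a morphism of inverse systems indexed by the directed set $\tilde{W}\left(I,J\right)$, and applying $\varprojlim_{\mathfrak{a}}$ recovers $\Lambda_{I,J}\left(f\right)$ by the very definition of $\Lambda_{I,J}$.

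The main obstacle is then the second step, for $\varprojlim$ is only left exact in general, so levelwise surjectivity need not survive. To overcome this I would use the fact that each $\Lambda_{\mathfrak{a}}\left(M\right)$ is itself linearly compact: every quotient $M/\mathfrak{a}^{n}M$ is linearly compact as a Hausdorff quotient of the linearly compact module $M$, and an inverse limit of linearly compact modules under continuous $R$-homomorphisms is again linearly compact, by the corresponding results of Macdonald \cite{macdonald}. I would then invoke Macdonald's theorem that an inverse limit of a system of surjective continuous $R$-homomorphisms between linearly compact Hausdorff $R$-modules over a directed index set remains surjective. Applied to the compatible family of surjections $\left\{\Lambda_{\mathfrak{a}}\left(f\right)\right\}_{\mathfrak{a} \in \tilde{W}\left(I,J\right)}$, this yields the desired surjectivity of $\Lambda_{I,J}\left(f\right)$ and completes the argument.
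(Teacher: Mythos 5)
Your proposal is correct and follows essentially the same route as the paper: it first reduces to the single-ideal case via \cite[Corollary $2.5$]{left derived functors}, and then uses linear compactness of the modules $\Lambda_{\mathfrak{a}}\left(M\right)$ to push the levelwise surjectivity through $\varprojlim_{\mathfrak{a} \in \tilde{W}\left(I,J\right)}$. The only cosmetic difference is that the paper implements the limit step by forming the short exact sequence of inverse systems involving the kernels ${\rm Ker}\left(\Lambda_{\mathfrak{a}}\left(f\right)\right)$ and invoking the exactness of inverse limits on systems of linearly compact modules, which is precisely the standard proof of the surjectivity-preservation statement you cite directly.
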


\begin{proof}
We have the epimorphism $f: M\rightarrow N$. By \cite[Corollary $2.5$]{left derived functors} we have that, for all $\mathfrak{a} \in \tilde{W}\left(I,J\right)$, the homomorphism $\Lambda_{\mathfrak{a}}\left(f\right): \Lambda_{\mathfrak{a}}\left(M\right)\rightarrow \Lambda_{\mathfrak{a}}\left(N\right)$ is surjective. As $M$ is linearly compact $R$-module, it follows that $\left\{M/\mathfrak{a}^{t}M\right\}_{t \in \mathbb{N}}$ is a inverse system of linearly compact $R$-modules. By \cite[Lemma $2.3$, item $\left(iv\right)$]{uma teoria de homologia local para modulo linearmente compacto} it follows that $\varprojlim_{t \in \mathbb{N}} M/\mathfrak{a}^{t}M = \Lambda_{\mathfrak{a}}\left(M\right)$ is linearly compact $R$-module. Thus, $\left\{{\rm Ker}\left(\Lambda_{\mathfrak{a}}\left(f\right)\right)\right\}_{\mathfrak{a} \in \tilde{W}\left(I,J\right)}$ is also a inverse system of linearly compact $R$-modules. Therefore, we have
\begin{center}
$0\rightarrow \left\{{\rm Ker}\left(\Lambda_{\mathfrak{a}}\left(f\right)\right)\right\}_{\mathfrak{a} \in \tilde{W}\left(I,J\right)}\rightarrow \left\{\Lambda_{\mathfrak{a}}\left(M\right)\right\}_{\mathfrak{a} \in \tilde{W}\left(I,J\right)}\rightarrow \left\{\Lambda_{\mathfrak{a}}\left(N\right)\right\}_{\mathfrak{a} \in \tilde{W}\left(I,J\right)}\rightarrow 0$,
\end{center}
a short exact sequence of inverse systems of $R$-modules. By \cite[Lemma $2.4$]{uma teoria de homologia local para modulo linearmente compacto} we have that the sequence of inverse limits
\begin{center}
$0\rightarrow \varprojlim_{\mathfrak{a} \in \tilde{W}\left(I,J\right)} {\bf L}_{\mathfrak{a}}\rightarrow \varprojlim_{\mathfrak{a} \in \tilde{W}\left(I,J\right)} \Lambda_{\mathfrak{a}}\left(M\right)\rightarrow \varprojlim_{\mathfrak{a} \in \tilde{W}\left(I,J\right)} \Lambda_{\mathfrak{a}}\left(N\right)\rightarrow 0$,
\end{center}
is exact, where ${\bf L}_{\mathfrak{a}} := {\rm Ker}\left(\Lambda_{\mathfrak{a}}\left(f\right)\right)$. Therefore, the homomorphism $\Lambda_{I,J}\left(f\right): \Lambda_{I,J}\left(M\right)\rightarrow \Lambda_{I,J}\left(N\right)$ is also surjective, as required.
\end{proof}

\begin{lem}\label{limite inverso comuta} Let $M$ be a linearly compact $R$-module and let $\mathfrak{a} \in \tilde{W}\left(I,J\right)$ an ideal any of $R$. Then we have the following isomorphism:
\begin{center}
$\varprojlim_{\mathfrak{a} \in \tilde{W}\left(I,J\right)} \mathfrak{L}_{i}^{\mathfrak{a}}\left(M\right) \cong \mathfrak{L}_{i}^{I,J}\left(M\right)$, for all $i \geq 0$.
\end{center}
\end{lem}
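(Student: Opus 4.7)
The plan is to compute both sides from a single projective resolution of $M$ and then swap the inverse limit with the homology functor, using that every relevant module stays within the category of linearly compact $R$-modules. Concretely, choose a projective resolution $P_\bullet \to M \to 0$. By definition of the left derived functors, $\mathfrak{L}_i^{\mathfrak{a}}(M) = H_i(\Lambda_{\mathfrak{a}}(P_\bullet))$ for each $\mathfrak{a} \in \tilde{W}(I,J)$. Moreover, by Definition \ref{funtor de dois ideais}, $\Lambda_{I,J}$ is $\varprojlim_{\mathfrak{a}} \Lambda_{\mathfrak{a}}$ applied termwise, so $\mathfrak{L}_i^{I,J}(M) = H_i(\Lambda_{I,J}(P_\bullet)) = H_i\bigl(\varprojlim_{\mathfrak{a}} \Lambda_{\mathfrak{a}}(P_\bullet)\bigr)$. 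The isomorphism to be proved therefore amounts to commuting $\varprojlim_{\mathfrak{a}}$ with $H_i$ on the inverse system of complexes $\{\Lambda_{\mathfrak{a}}(P_\bullet)\}_{\mathfrak{a} \in \tilde{W}(I,J)}$.

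To carry out the interchange, I would break each complex $\Lambda_{\mathfrak{a}}(P_\bullet)$ into the usual two families of short exact sequences $0 \to Z_i^{\mathfrak{a}} \to \Lambda_{\mathfrak{a}}(P_i) \to B_{i-1}^{\mathfrak{a}} \to 0$ and $0 \to B_i^{\mathfrak{a}} \to Z_i^{\mathfrak{a}} \to \mathfrak{L}_i^{\mathfrak{a}}(M) \to 0$, assembled into short exact sequences of inverse systems indexed by $\tilde{W}(I,J)$. If each term in these inverse systems is linearly compact with continuous transition maps, then Lemma $2.4$ of \cite{uma teoria de homologia local para modulo linearmente compacto}, already invoked in the proof of Proposition \ref{homomorfismo sobrejetivo}, ensures that $\varprojlim_{\mathfrak{a}}$ preserves exactness on each of them. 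Chaining these exactness statements through the cycle/boundary decomposition lets one pass $\varprojlim_{\mathfrak{a}}$ inside the homology, producing the desired identification $\varprojlim_{\mathfrak{a}} \mathfrak{L}_i^{\mathfrak{a}}(M) \cong \mathfrak{L}_i^{I,J}(M)$ for every $i \geq 0$.

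The main obstacle is precisely establishing linear compactness at every stage of the construction, since an arbitrary projective resolution of $M$ need not have linearly compact completions $\Lambda_{\mathfrak{a}}(P_i)$. Here the hypothesis that $M$ is linearly compact is essential: combined with Proposition \ref{homomorfismo sobrejetivo} (which yields surjectivity of $\Lambda_{I,J}$ out of a linearly compact module) and the fact invoked there that $\Lambda_{\mathfrak{a}}$ of a linearly compact $R$-module is again linearly compact, one proceeds by dimension shifting. Replacing $P_\bullet$ by syzygies of $M$ inside the linearly compact category, every $\Lambda_{\mathfrak{a}}(P_i)$ together with its cycles, boundaries, and homologies is linearly compact, and the transition maps induced by $\mathfrak{b} \subseteq \mathfrak{a}$ are continuous. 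With this setup in place, a straightforward induction on $i$ using Lemma $2.4$ delivers the required commutation of $\varprojlim_{\mathfrak{a}}$ with $H_i$, and hence the stated isomorphism.
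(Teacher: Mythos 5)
Your overall skeleton is the same as the paper's: compute both sides from a single resolution of $M$, apply $\Lambda_{\mathfrak{a}}$ termwise, and interchange $\varprojlim_{\mathfrak{a} \in \tilde{W}(I,J)}$ with homology using that $\varprojlim_{\mathfrak{a}}$ is exact on inverse systems of linearly compact modules (\cite[Lemma $2.4$]{uma teoria de homologia local para modulo linearmente compacto}). Your explicit cycle/boundary decomposition is exactly the content of the result the paper invokes for this interchange, namely \cite[$6.1$, Theorem $1$]{northcott}, which says that an additive exact functor commutes with homology; so up to that point the two arguments coincide.

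The genuine gap is in your handling of the crux you correctly identify, the linear compactness of the terms $\Lambda_{\mathfrak{a}}(P_i)$. Your proposed repair --- ``replacing $P_\bullet$ by syzygies of $M$ inside the linearly compact category'' and proceeding by dimension shifting --- cannot be carried out as stated. The syzygies of a projective resolution are kernels of maps between projective modules, and there is no reason for them to be linearly compact or even to carry a natural topology; the category of linearly compact modules offers no evident supply of projective objects to resolve with; and the tools you invoke (Proposition \ref{homomorfismo sobrejetivo}, and the fact that $\Lambda_{\mathfrak{a}}$ of a linearly compact module is linearly compact) apply only to linearly compact inputs, so they never reach $\Lambda_{\mathfrak{a}}(P_i)$ --- the repair is circular. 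Dimension shifting is additionally obstructed because $\Lambda_{\mathfrak{a}}$ is not right exact, so $\mathfrak{L}_0^{\mathfrak{a}} \neq \Lambda_{\mathfrak{a}}$ in general and the usual low-degree identifications fail. The paper closes this point without modifying the resolution at all: it cites \cite[Lemma $2.3$, items $(iii)$ and $(iv)$]{uma teoria de homologia local para modulo linearmente compacto} to assert that for each free module $F_i$ in the resolution the family $\left\{F_i \otimes_R R/\mathfrak{a}^t\right\}_{t \in \mathbb{N}}$ is an inverse system of linearly compact modules, whence each $\Lambda_{\mathfrak{a}}(F_i) = \varprojlim_{t} F_i/\mathfrak{a}^t F_i$ is linearly compact and $\left\{\Lambda_{\mathfrak{a}}(F_i)\right\}_{\mathfrak{a} \in \tilde{W}(I,J)}$ is a system on which $\varprojlim_{\mathfrak{a}}$ is exact. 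That cited lemma is precisely the ingredient your write-up is missing; with it, your cycle/boundary argument goes through on the original resolution, and without it (or some substitute) your proof does not close.
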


\begin{proof}
Let ${\bf F}_{\bullet}: \ldots \rightarrow F_{i}\rightarrow \ldots \rightarrow F_{1}\rightarrow F_{0}\rightarrow M\rightarrow 0$ be a free resolution of $M$. Since $M$ is linearly compact, we have that $\left\{R/\mathfrak{a}^{t}\right\}_{t \in \mathbb{N}}$ is a inverse system of linearly compact $R$-modules, for all $\mathfrak{a} \in \tilde{W}\left(I,J\right)$; so we have by \cite[Lemma $2.3$, item $\left(iii\right)$]{uma teoria de homologia local para modulo linearmente compacto} that $\left\{F_{i} \otimes_{R} R/\mathfrak{a}^{t}\right\}_{t \in \mathbb{N}}$ is a inverse system of linearly compact $R$-modules, for all $i \geq 0$. Now, by \cite[Lemma $2.3$, item $\left(iv\right)$]{uma teoria de homologia local para modulo linearmente compacto}, we have that $\left\{\varprojlim_{t \in \mathbb{N}} F_{i}/\mathfrak{a}^{t}F_{i}\right\}_{\mathfrak{a} \in \tilde{W}\left(I,J\right)}$ is a inverse system of linearly compact $R$-modules. Moreover, we have that $\varprojlim_{\mathfrak{a} \in \tilde{W}\left(I,J\right)}$ is a covariant additive exact functor on linearly compact $R$-modules, by \cite[Lemma $2.4$]{uma teoria de homologia local para modulo linearmente compacto}. Thus, we have that
\begin{center}
${\rm H}_{i}\left(\varprojlim_{\mathfrak{a} \in \tilde{W}\left(I,J\right)}\left(\varprojlim_{t \in \mathbb{N}} {\bf F}_{\bullet} \otimes_{R} R/\mathfrak{a}^{t}\right)\right) \cong \varprojlim_{\mathfrak{a} \in \tilde{W}\left(I,J\right)} {\rm H}_{i}\left(\varprojlim_{t \in \mathbb{N}} {\bf F}_{\bullet} \otimes_{R} R/\mathfrak{a}^{t}\right)$,
\end{center}
by \cite[$6.1$, Theorem $1$]{northcott}. Therefore, $\varprojlim_{\mathfrak{a} \in \tilde{W}\left(I,J\right)} \mathfrak{L}_{i}^{\mathfrak{a}}\left(M\right) \cong \mathfrak{L}_{i}^{I,J}\left(M\right)$, for all $i \geq 0$, as required.
\end{proof}

\begin{thm}\label{pertence a categoria nova} Let $M$ be a linearly compact $R$-module such that the inverse system of $R$-modules $\left\{\mathfrak{a}^{t}M\right\}_{t \in \mathbb{N}}$, for all $\mathfrak{a} \in \tilde{W}\left(I,J\right)$, is stationary, i.e., there is a positive integer $n$ such that $\mathfrak{a}^{t}M = \mathfrak{a}^{n}M$, for all $t \geq n$. Then, $M \in \mathfrak{C}_{\Lambda_{I,J}}\left(R\right)$.
\end{thm}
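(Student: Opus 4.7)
The plan is to reduce the two-ideal statement to its single-ideal counterpart via Lemma \ref{limite inverso comuta}, and then settle the single-ideal case using the stationarity hypothesis together with the results already cited from \cite{left derived functors} and \cite{uma teoria de homologia local para modulo linearmente compacto}.

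First, Lemma \ref{limite inverso comuta} at $i=0$ yields
\[
\mathfrak{L}_{0}^{I,J}(M)\;\cong\;\varprojlim_{\mathfrak{a}\in\tilde{W}(I,J)}\mathfrak{L}_{0}^{\mathfrak{a}}(M),
\]
while by Definition \ref{funtor de dois ideais} we have $\Lambda_{I,J}(M)=\varprojlim_{\mathfrak{a}\in\tilde{W}(I,J)}\Lambda_{\mathfrak{a}}(M)$. Hence it suffices to produce, naturally in $\mathfrak{a}\in\tilde{W}(I,J)$, an isomorphism $\mathfrak{L}_{0}^{\mathfrak{a}}(M)\cong\Lambda_{\mathfrak{a}}(M)$; passing to inverse limits on both sides then gives $\mathfrak{L}_{0}^{I,J}(M)\cong\Lambda_{I,J}(M)$, that is, $M\in\mathfrak{C}_{\Lambda_{I,J}}(R)$.

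Fix $\mathfrak{a}\in\tilde{W}(I,J)$ and pick $n$ with $\mathfrak{a}^{t}M=\mathfrak{a}^{n}M$ for every $t\geq n$, so the inverse system $\{M/\mathfrak{a}^{t}M\}_{t}$ is eventually constant and $\Lambda_{\mathfrak{a}}(M)\cong M/\mathfrak{a}^{n}M$. Take a free resolution $\mathbf{F}_{\bullet}\to M\to 0$, put $K=\ker(F_{0}\to M)$, and note that by definition $\mathfrak{L}_{0}^{\mathfrak{a}}(M)=\mathrm{coker}\bigl(\Lambda_{\mathfrak{a}}(F_{1})\to\Lambda_{\mathfrak{a}}(F_{0})\bigr)$, together with a natural morphism $\mathfrak{L}_{0}^{\mathfrak{a}}(M)\to\Lambda_{\mathfrak{a}}(M)$ induced by $F_{0}\to M$. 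Surjectivity of this natural morphism follows from \cite[Corollary $2.5$]{left derived functors}, the single-ideal analogue of Proposition \ref{homomorfismo sobrejetivo}, applied to the epimorphism $F_{0}\to M$ of linearly compact target. For injectivity, consider the short exact sequence of inverse systems
\[
0\to K/(K\cap \mathfrak{a}^{t}F_{0})\to F_{0}/\mathfrak{a}^{t}F_{0}\to M/\mathfrak{a}^{t}M\to 0,
\]
and observe that stationarity forces the right-hand system to be Mittag-Leffler; combining this with \cite[Lemma $2.4$]{uma teoria de homologia local para modulo linearmente compacto} kills the $\varprojlim^{1}$-obstruction, so passing to $\varprojlim_{t}$ preserves left-exactness and identifies the kernel of $\Lambda_{\mathfrak{a}}(F_{0})\to\Lambda_{\mathfrak{a}}(M)$ with the image of $\Lambda_{\mathfrak{a}}(F_{1})\to\Lambda_{\mathfrak{a}}(F_{0})$.

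Naturality of these isomorphisms in $\mathfrak{a}$ is routine, so assembling through the inverse limit over $\tilde{W}(I,J)$ finishes the proof. I expect the main obstacle to be the injectivity step above: the non-right-exactness of $\Lambda_{\mathfrak{a}}$ forces a careful use of stationarity both to annihilate the Mittag-Leffler obstruction and to match the resulting limit kernel with the image of $\Lambda_{\mathfrak{a}}(F_{1})$; the latter match-up implicitly requires comparing the filtration $\{K\cap\mathfrak{a}^{t}F_{0}\}_{t}$ on $K$ with $\{\mathfrak{a}^{t}K\}_{t}$ by an Artin--Rees type argument valid in the linearly compact setting.
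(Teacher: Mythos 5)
Your reduction is exactly the paper's: both proofs combine Lemma \ref{limite inverso comuta} at $i=0$ with the definition $\Lambda_{I,J}(M)=\varprojlim_{\mathfrak{a}\in\tilde{W}(I,J)}\Lambda_{\mathfrak{a}}(M)$, so that everything rests on the single-ideal isomorphism $\mathfrak{L}_{0}^{\mathfrak{a}}(M)\cong\Lambda_{\mathfrak{a}}(M)$. The difference is that the paper simply quotes this isomorphism from \cite[Theorem $2.3$]{completamento I-adico}, whereas you attempt to reprove it from a free resolution, and it is there that your argument has a genuine gap. Left exactness of $\varprojlim_{t}$ already identifies $\ker\left(\Lambda_{\mathfrak{a}}(F_{0})\to\Lambda_{\mathfrak{a}}(M)\right)$ with $\varprojlim_{t}K/(K\cap\mathfrak{a}^{t}F_{0})$; no Mittag--Leffler input is needed for that, and the $\varprojlim^{1}$ relevant to surjectivity is that of the kernel system $\{K/(K\cap\mathfrak{a}^{t}F_{0})\}_{t}$, which vanishes because its transition maps are surjective, not because $\{M/\mathfrak{a}^{t}M\}_{t}$ is stationary. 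The crux, which you assert rather than prove, is that this kernel coincides with the image of $\Lambda_{\mathfrak{a}}(F_{1})\to\Lambda_{\mathfrak{a}}(F_{0})$, i.e.\ that $F_{1}\to K$ remains surjective after completing $K$ with respect to the filtration $\{K\cap\mathfrak{a}^{t}F_{0}\}_{t}$. You defer this to an ``Artin--Rees type argument valid in the linearly compact setting,'' but no such argument is available along these lines: $R$ is Noetherian, yet $K\subseteq F_{0}$ is in general not finitely generated, and none of $F_{0}$, $F_{1}$, $K$ is linearly compact (linear compactness of $M$ does not pass to a free resolution), so neither the classical Artin--Rees lemma nor the linearly compact machinery of \cite{uma teoria de homologia local para modulo linearmente compacto} applies. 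What you are deferring is precisely the content of the theorem the paper cites; the honest repair is to invoke \cite[Theorem $2.3$]{completamento I-adico} at this point, which collapses your proof into the paper's two-line argument.

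A secondary slip: you invoke \cite[Corollary $2.5$]{left derived functors} for the epimorphism $F_{0}\to M$ ``of linearly compact target,'' but that result (like Proposition \ref{homomorfismo sobrejetivo}) requires the \emph{source} to be linearly compact, which a free module $F_{0}$ generally is not. The surjectivity you need is nevertheless true for elementary reasons: the kernels $K/(K\cap\mathfrak{a}^{t}F_{0})$ form a system with surjective transition maps, so $\varprojlim^{1}$ vanishes and $\Lambda_{\mathfrak{a}}(F_{0})\to\Lambda_{\mathfrak{a}}(M)$ is onto; alternatively, under stationarity $\Lambda_{\mathfrak{a}}(M)=M/\mathfrak{a}^{n}M$ and surjectivity is immediate. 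So this citation error is repairable, unlike the injectivity step.
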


\begin{proof}
From of \cite[Theorem $2.3$]{completamento I-adico} we have that $\mathfrak{L}_{0}^{\mathfrak{a}}\left(M\right) \cong \Lambda_{\mathfrak{a}}\left(M\right)$, for all $\mathfrak{a} \in \tilde{W}\left(I,J\right)$. Therefore, we have that $\varprojlim_{\mathfrak{a} \in \tilde{W}\left(I,J\right)} \mathfrak{L}_{0}^{\mathfrak{a}}\left(M\right) \cong \Lambda_{I,J}\left(M\right)$. Now, by Lemma \ref{limite inverso comuta}, it follows that $\varprojlim_{\mathfrak{a} \in \tilde{W}\left(I,J\right)} \mathfrak{L}_{0}^{\mathfrak{a}}\left(M\right) \cong \mathfrak{L}_{0}^{I,J}\left(M\right)$ and so $\mathfrak{L}_{0}^{I,J}\left(M\right) \cong \Lambda_{I,J}\left(M\right)$. Thus, we have by definition that, $M \in \mathfrak{C}_{\Lambda_{I,J}}\left(R\right)$, as required.
\end{proof}

Artinian modules certainly satisfy the hypothesis of Theorem \ref{pertence a categoria nova}. Therefore, we have the following immediate consequence.

\begin{cor} Let $M$ be an Artinian $R$-module. Then, we have that the homomorphism of $R$-modules
\begin{center}
$\phi_{M}: \mathfrak{L}_{0}^{I,J}\left(M\right) \rightarrow \Lambda_{I,J}\left(M\right)$,
\end{center}
is an isomorphism.
\end{cor}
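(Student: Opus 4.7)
The plan is to verify the two hypotheses of Theorem \ref{pertence a categoria nova} for an Artinian $R$-module $M$ and then simply invoke that theorem, since by Definition \ref{categoria nova} the conclusion $M \in \mathfrak{C}_{\Lambda_{I,J}}(R)$ is exactly the statement that the natural map $\phi_{M}: \mathfrak{L}_{0}^{I,J}(M) \to \Lambda_{I,J}(M)$ is an isomorphism.

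First I would recall, as noted in the introduction, that any Artinian $R$-module is linearly compact when equipped with the discrete topology (this is Macdonald \cite[3.10]{macdonald}, or equivalently \cite[Theorem $2.1$]{modulos linearmente compactos sobre aneis noetherianos} as cited earlier in the paper). Hence the first hypothesis of Theorem \ref{pertence a categoria nova} is automatic.

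Next I would verify the stationarity of the inverse system $\{\mathfrak{a}^{t}M\}_{t \in \mathbb{N}}$ for every $\mathfrak{a} \in \tilde{W}(I,J)$. This is where the Artinian hypothesis genuinely does the work: the chain
\[
M \supseteq \mathfrak{a}M \supseteq \mathfrak{a}^{2}M \supseteq \mathfrak{a}^{3}M \supseteq \cdots
\]
is a descending chain of $R$-submodules of the Artinian module $M$, so it must eventually stabilize; that is, there exists a positive integer $n$ (depending on $\mathfrak{a}$) such that $\mathfrak{a}^{t}M = \mathfrak{a}^{n}M$ for all $t \geq n$. This is precisely the stationarity hypothesis of Theorem \ref{pertence a categoria nova}.

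With both hypotheses verified, Theorem \ref{pertence a categoria nova} immediately yields $M \in \mathfrak{C}_{\Lambda_{I,J}}(R)$, hence $\mathfrak{L}_{0}^{I,J}(M) \cong \Lambda_{I,J}(M)$ via $\phi_{M}$, as required. There is really no obstacle in this argument: the nontrivial content is entirely contained in Theorem \ref{pertence a categoria nova} and Lemma \ref{limite inverso comuta}, and the Artinian hypothesis is just strong enough to supply both the linear compactness and the descending chain condition on powers of ideals.
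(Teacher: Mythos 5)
Your proof is correct and follows exactly the same route as the paper's: verify that an Artinian module is linearly compact with the discrete topology and that the descending chains $\{\mathfrak{a}^{t}M\}_{t \in \mathbb{N}}$ stabilize, then invoke Theorem \ref{pertence a categoria nova}. In fact you spell out the descending-chain argument for stationarity, which the paper merely asserts, so your write-up is if anything slightly more complete.
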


\begin{proof}
Indeed, as $M$ is an Artinian $R$-module, we have that for all $\mathfrak{a} \in \tilde{W}\left(I,J\right)$ the family of $R$-modules $\left\{\mathfrak{a}^{t}M\right\}_{t \in \mathbb{N}}$, that is a family of $R$-submodules of $M$, is stationary. Moreover, $M$ is a linearly compact $R$-module with the discrete topology. Therefore, by the Theorem \ref{pertence a categoria nova}, we have that $\mathfrak{L}_{0}^{I,J}\left(M\right)$ is isomorphic to $\Lambda_{I,J}\left(M\right)$. Therefore, $\phi_{M}$ is an isomorphism, as required.
\end{proof}

We also have the following consequence.

\begin{cor} Let $M$ be an linearly compact $R$-module. Then the following statements:
\begin{itemize}
\item[$\left(i\right)$] $\mathfrak{a}M = M$, for all $\mathfrak{a} \in \tilde{W}\left(I,J\right)$;
\item[$\left(ii\right)$] $\mathfrak{L}_{0}^{I,J}\left(M\right) = 0$;
\item[$\left(iii\right)$] $\Lambda_{I,J}\left(M\right) = 0$,
\end{itemize}
are such that we have the implications: $\left(i\right) \Rightarrow \left(ii\right)$; $\left(i\right)$ and $\left(ii\right) \Rightarrow \left(iii\right)$.
\end{cor}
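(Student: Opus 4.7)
The plan is to reduce both implications to a single observation, namely that hypothesis $(i)$ places $M$ in the category $\mathfrak{C}_{\Lambda_{I,J}}(R)$ of Definition \ref{categoria nova}, which makes $\mathfrak{L}_{0}^{I,J}(M)$ and $\Lambda_{I,J}(M)$ canonically isomorphic. Once that isomorphism is in hand, the two implications become essentially bookkeeping.

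For $(i)\Rightarrow (ii)$, I would first note that if $\mathfrak{a}M = M$ for every $\mathfrak{a}\in\tilde{W}(I,J)$, then by induction $\mathfrak{a}^{t}M = M$ for every $t\geq 1$, so in particular the descending chain $\{\mathfrak{a}^{t}M\}_{t\in\mathbb{N}}$ is stationary (it is constantly $M$). Since $M$ is linearly compact, the hypotheses of Theorem \ref{pertence a categoria nova} are met, giving $M\in\mathfrak{C}_{\Lambda_{I,J}}(R)$, i.e.\ an isomorphism $\mathfrak{L}_{0}^{I,J}(M)\cong\Lambda_{I,J}(M)$. On the other hand, $M/\mathfrak{a}^{t}M = 0$ for every $t$ and every $\mathfrak{a}\in\tilde{W}(I,J)$, so
\[
\Lambda_{\mathfrak{a}}(M) \;=\; \varprojlim_{t\in\mathbb{N}} M/\mathfrak{a}^{t}M \;=\; 0,
\]
and therefore $\Lambda_{I,J}(M) = \varprojlim_{\mathfrak{a}\in\tilde{W}(I,J)}\Lambda_{\mathfrak{a}}(M) = 0$. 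Combining with the previous isomorphism, $\mathfrak{L}_{0}^{I,J}(M) = 0$, which is $(ii)$.

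For $(i)$ together with $(ii)\Rightarrow(iii)$, I would simply reuse the isomorphism $\mathfrak{L}_{0}^{I,J}(M)\cong \Lambda_{I,J}(M)$ produced in the previous step from hypothesis $(i)$ alone (via Theorem \ref{pertence a categoria nova}), and then substitute the vanishing given by $(ii)$ to conclude $\Lambda_{I,J}(M) = 0$.

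There is no real obstacle here; the only subtlety is remembering that without hypothesis $(i)$ we do not know that $M$ belongs to $\mathfrak{C}_{\Lambda_{I,J}}(R)$, which is precisely why the implication $(ii)\Rightarrow(iii)$ needs $(i)$ as an auxiliary assumption rather than being a general fact. I would close by pointing out that, in fact, the argument for the first implication already produces $\Lambda_{I,J}(M) = 0$ en route, so $(i)$ alone implies $(iii)$ as well, but only the two implications requested are recorded in the statement.
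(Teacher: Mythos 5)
Your proposal is correct and follows essentially the same route as the paper: both implications are reduced to Theorem \ref{pertence a categoria nova} (hypothesis $(i)$ makes each system $\{\mathfrak{a}^{t}M\}_{t\in\mathbb{N}}$ stationary, hence $\mathfrak{L}_{0}^{I,J}(M)\cong\Lambda_{I,J}(M)$), combined with the direct computation $\Lambda_{I,J}(M)=\varprojlim_{\mathfrak{a}}\varprojlim_{t}M/\mathfrak{a}^{t}M=0$ under $(i)$. Your closing remark that $(i)$ alone already yields $(iii)$ is also accurate and is implicit in the paper's own argument.
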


\begin{proof}
$\left(i\right) \Rightarrow \left(ii\right)$: From the hypothesis we have that $M = \mathfrak{a}^{t}M$, for all $t \in \mathbb{N}$ and for all $\mathfrak{a} \in \tilde{W}\left(I,J\right)$. Thus, the inverse system of $R$-modules $\left\{\mathfrak{a}^{t}M\right\}_{t \in \mathbb{N}}$ is stationary. Therefore, by the Theorem \ref{pertence a categoria nova}, we have that $\mathfrak{L}_{0}^{I,J}\left(M\right) \cong \Lambda_{I,J}\left(M\right) = \varprojlim_{\mathfrak{a} \in \tilde{W}\left(I,J\right)} \varprojlim_{t \in \mathbb{N}} M/\mathfrak{a}^{t}M = 0$, as required.\newline
$\left(i\right)$ and $\left(ii\right) \Rightarrow \left(iii\right)$: By $\left(i\right)$ we have that $\left\{\mathfrak{a}^{t}M\right\}_{t \in \mathbb{N}}$ is stationary; by the hypothesis $\left(ii\right)$ we have that $\mathfrak{L}_{0}^{I,J}\left(M\right) = 0$; by the Theorem \ref{pertence a categoria nova} we have that $\mathfrak{L}_{0}^{I,J}\left(M\right) \cong \Lambda_{I,J}\left(M\right)$, and then it follows that $\Lambda_{I,J}\left(M\right) = 0$.
\end{proof}

{\rm The following result is a generalization, to the case of a pair of ideals, of \cite[Lemma $2.6$]{left derived functors}.}

\begin{prop} Let $G\rightarrow L\stackrel{f}{\rightarrow} N\rightarrow 0$ be an exact sequence of $R$-modules, where $G$ and $L$ are linearly compact $R$-modules, and such that the inverse system $\left\{\mathfrak{a}^{t}N\right\}_{t \in \mathbb{N}}$ is stationary, for all $\mathfrak{a} \in \tilde{W}\left(I,J\right)$. Then the following induced sequence
\begin{center}
$\Lambda_{I,J}\left(G\right)\rightarrow \Lambda_{I,J}\left(L\right)\rightarrow \Lambda_{I,J}\left(N\right)\rightarrow 0$,
\end{center}
is exact.
\end{prop}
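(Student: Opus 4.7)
The plan is to apply the single-ideal version \cite[Lemma $2.6$]{left derived functors} separately for each $\mathfrak{a} \in \tilde{W}\left(I,J\right)$ and then pass to the inverse limit over the directed set $\tilde{W}\left(I,J\right)$. For any fixed $\mathfrak{a} \in \tilde{W}\left(I,J\right)$, the hypothesis that $\left\{\mathfrak{a}^{t}N\right\}_{t \in \mathbb{N}}$ is stationary together with the linear compactness of $G$ and $L$ matches exactly the setting of that lemma, yielding an exact sequence
\begin{center}
$\Lambda_{\mathfrak{a}}\left(G\right)\rightarrow \Lambda_{\mathfrak{a}}\left(L\right)\rightarrow \Lambda_{\mathfrak{a}}\left(N\right)\rightarrow 0$.
\end{center}

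Next, set ${\bf K}_{\mathfrak{a}} := {\rm Ker}\left(\Lambda_{\mathfrak{a}}\left(L\right)\rightarrow \Lambda_{\mathfrak{a}}\left(N\right)\right)$, so one obtains a short exact sequence $0\rightarrow {\bf K}_{\mathfrak{a}}\rightarrow \Lambda_{\mathfrak{a}}\left(L\right)\rightarrow \Lambda_{\mathfrak{a}}\left(N\right)\rightarrow 0$ together with a surjection $\Lambda_{\mathfrak{a}}\left(G\right)\rightarrow {\bf K}_{\mathfrak{a}}$. As in the proof of Proposition \ref{homomorfismo sobrejetivo}, $\left\{\Lambda_{\mathfrak{a}}\left(G\right)\right\}_{\mathfrak{a}}$ and $\left\{\Lambda_{\mathfrak{a}}\left(L\right)\right\}_{\mathfrak{a}}$ are inverse systems of linearly compact $R$-modules by \cite[Lemma $2.3$, item $\left(iv\right)$]{uma teoria de homologia local para modulo linearmente compacto}; the same holds for $\left\{\Lambda_{\mathfrak{a}}\left(N\right)\right\}_{\mathfrak{a}}$, since each $\Lambda_{\mathfrak{a}}\left(N\right)$ is the continuous image of the linearly compact $\Lambda_{\mathfrak{a}}\left(L\right)$, and consequently $\left\{{\bf K}_{\mathfrak{a}}\right\}_{\mathfrak{a}}$ is an inverse system of linearly compact modules as well, each being a closed submodule of $\Lambda_{\mathfrak{a}}\left(L\right)$. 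Invoking the exactness of $\varprojlim_{\mathfrak{a} \in \tilde{W}\left(I,J\right)}$ on inverse systems of linearly compact $R$-modules \cite[Lemma $2.4$]{uma teoria de homologia local para modulo linearmente compacto}, passage to the inverse limit of the short exact sequence gives
\begin{center}
$0\rightarrow \varprojlim_{\mathfrak{a}} {\bf K}_{\mathfrak{a}}\rightarrow \Lambda_{I,J}\left(L\right)\rightarrow \Lambda_{I,J}\left(N\right)\rightarrow 0$,
\end{center}
and passage to the inverse limit of the surjection $\Lambda_{\mathfrak{a}}\left(G\right)\rightarrow {\bf K}_{\mathfrak{a}}$ preserves surjectivity; splicing these two facts yields the desired exactness of $\Lambda_{I,J}\left(G\right)\rightarrow \Lambda_{I,J}\left(L\right)\rightarrow \Lambda_{I,J}\left(N\right)\rightarrow 0$. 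The surjectivity at the right can alternatively be read off directly from Proposition \ref{homomorfismo sobrejetivo}, since $f: L\rightarrow N$ is surjective and $L$ is linearly compact.

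The main obstacle is confirming linear compactness of the auxiliary inverse systems $\left\{\Lambda_{\mathfrak{a}}\left(N\right)\right\}_{\mathfrak{a}}$ and $\left\{{\bf K}_{\mathfrak{a}}\right\}_{\mathfrak{a}}$, since the exactness result for $\varprojlim$ from \cite{uma teoria de homologia local para modulo linearmente compacto} applies only in that category. Linear compactness of $\Lambda_{\mathfrak{a}}\left(N\right)$ reduces to the standard fact that Hausdorff quotients of linearly compact modules remain linearly compact, applied to the surjection $\Lambda_{\mathfrak{a}}\left(L\right)\rightarrow \Lambda_{\mathfrak{a}}\left(N\right)$; once this is in hand, ${\bf K}_{\mathfrak{a}}$ is automatically a closed submodule of a linearly compact module and hence linearly compact. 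After settling this technicality, the rest of the argument is a formal diagram chase through the exactness of $\varprojlim$ on linearly compact systems.
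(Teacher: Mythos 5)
Your proof is correct, but it follows a genuinely different decomposition than the paper's. The paper never invokes \cite[Lemma $2.6$]{left derived functors} ideal-by-ideal; instead it sets $K = {\rm Ker}\left(f\right)$ at the module level, splits the hypothesis into $G\rightarrow K\rightarrow 0$ and $0\rightarrow K\rightarrow L\rightarrow N\rightarrow 0$, obtains the surjection $\Lambda_{I,J}\left(G\right)\rightarrow \Lambda_{I,J}\left(K\right)$ from Proposition \ref{homomorfismo sobrejetivo}, re-derives for each $\mathfrak{a} \in \tilde{W}\left(I,J\right)$ the exactness of $\Lambda_{\mathfrak{a}}\left(K\right)\rightarrow \Lambda_{\mathfrak{a}}\left(L\right)\rightarrow \Lambda_{\mathfrak{a}}\left(N\right)\rightarrow 0$ following the proof of \cite[Theorem $2.3$]{completamento I-adico}, and then passes to the limit over $\tilde{W}\left(I,J\right)$ using \cite[Lemma $2.4$]{uma teoria de homologia local para modulo linearmente compacto}, splicing the two pieces at the end. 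You instead quote the single-ideal statement wholesale (legitimate, since the proposition is announced as its generalization) and form the kernels ${\bf K}_{\mathfrak{a}}$ inside the completions $\Lambda_{\mathfrak{a}}\left(L\right)$ rather than inside $L$, so all of your work is concentrated in the limit passage. The trade-off: the paper's module-level kernel $K$ is a submodule of the linearly compact $L$, so its auxiliary objects are of the form $\Lambda_{\mathfrak{a}}$ of a linearly compact module, where \cite[Lemma $2.3$, item $\left(iv\right)$]{uma teoria de homologia local para modulo linearmente compacto} applies directly; your completion-level kernels force you to verify linear compactness of $\Lambda_{\mathfrak{a}}\left(N\right)$ and ${\bf K}_{\mathfrak{a}}$ by hand, which you correctly do via the Hausdorff-quotient and closed-submodule facts. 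This actually makes your write-up more careful than the paper's at one point: the paper applies the exactness of $\varprojlim_{\mathfrak{a}}$ to a sequence ending in $\Lambda_{\mathfrak{a}}\left(N\right)$ without ever checking that $\Lambda_{\mathfrak{a}}\left(N\right)$ is linearly compact (recall $N$ itself is not assumed linearly compact), a point your argument settles. One detail you should still make explicit: the claim that $\varprojlim_{\mathfrak{a}}$ preserves the surjectivity of $\Lambda_{\mathfrak{a}}\left(G\right)\rightarrow {\bf K}_{\mathfrak{a}}$ itself requires forming the (closed, hence linearly compact) kernel system and applying \cite[Lemma $2.4$]{uma teoria de homologia local para modulo linearmente compacto} to the resulting short exact sequence, exactly as in the proof of Proposition \ref{homomorfismo sobrejetivo}.
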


\begin{proof}
Set $K = {\rm Ker}\left(f\right)$; we have induced exact sequences
\begin{center}
$G\rightarrow K\rightarrow 0$ and $0\rightarrow K\rightarrow L\rightarrow N\rightarrow 0$.
\end{center}
From of Proposition \ref{homomorfismo sobrejetivo}, the first exact sequence induces an exact sequence
\begin{center}
$\Lambda_{I,J}\left(G\right)\rightarrow \Lambda_{I,J}\left(K\right)\rightarrow 0$.
\end{center}
As in the proof of \cite[Theorem $2.3$]{completamento I-adico}, the second exact sequence gives an exact sequence
\begin{center}
$\Lambda_{\mathfrak{a}}\left(K\right)\rightarrow \Lambda_{\mathfrak{a}}\left(L\right)\rightarrow \Lambda_{\mathfrak{a}}\left(N\right)\rightarrow 0$,
\end{center}
for all $\mathfrak{a} \in \tilde{W}\left(I,J\right)$. Since $K$ is linearly compact $R$-module, because $L$ is linearly compact, we have that $\left\{K/\mathfrak{a}^{t}K\right\}_{t \in \mathbb{N}}$ is a inverse system of linearly compact $R$-modules; by \cite[Lemma $2.3$, item $\left(iv\right)$]{uma teoria de homologia local para modulo linearmente compacto} it follows that $\Lambda_{\mathfrak{a}}\left(K\right)$ is linearly compact $R$-module. Therefore, $\left\{\Lambda_{\mathfrak{a}}\left(K\right)\right\}_{\mathfrak{a} \in \tilde{W}\left(I,J\right)}$ is a inverse system of linearly compact modules. By \cite[Lemma $2.4$]{uma teoria de homologia local para modulo linearmente compacto} we have that the sequence of inverse limits
\begin{center}
$\Lambda_{I,J}\left(K\right)\rightarrow \Lambda_{I,J}\left(L\right)\rightarrow \Lambda_{I,J}\left(N\right)\rightarrow 0$,
\end{center}
is exact. Therefore, the induced sequence
\begin{center}
$\Lambda_{I,J}\left(G\right)\rightarrow \Lambda_{I,J}\left(L\right)\rightarrow \Lambda_{I,J}\left(N\right)\rightarrow 0$,
\end{center}
is exact, as required.
\end{proof}

\begin{defn} {\rm By \cite[Theorem $3.2$]{cohomologia local de um par de ideais}, we have that for an $R$-module $M$  there is a natural isomorphism
\begin{center}
${\rm H}^{i}_{I,J}\left(M\right) \cong \varinjlim_{\mathfrak{a} \in \tilde{W}\left(I,J\right)} {\rm H}^{i}_{\mathfrak{a}}\left(M\right)$
\end{center}
for any integer $i \geq 0$.}
\end{defn}

This suggests the following definition: Let $I, J$ be an ideals of $R$ and $M$ an $R$-module. The $i$th local homology module ${\rm H}_{i}^{I,J}\left(M\right)$ of $M$ with respect to the pair of ideals $\left(I,J\right)$ is defined by
\begin{center}
${\rm H}_{i}^{I,J}\left(M\right) = \varprojlim_{\mathfrak{a} \in \tilde{W}\left(I,J\right)} {\rm H}_{i}^{\mathfrak{a}}\left(M\right) = \varprojlim_{\mathfrak{a} \in \tilde{W}\left(I,J\right)} \varprojlim_{t \in \mathbb{N}} {\rm Tor}_{i}^{R}\left(R/\mathfrak{a}^{t},M\right)$.
\end{center}

\begin{rem} {\rm In the above definition, we have immediately that when $i = 0$, ${\rm H}_{0}^{I,J}\left(M\right) = \Lambda_{I,J}\left(M\right)$. Moreover, for all $i \geq 0$, we have that when $J = 0$, ${\rm H}_{i}^{I,0}\left(M\right)$ coincides with the $i$th local homology module ${\rm H}_{i}^{I}\left(M\right)$ of $M$ with respect to $I$ (\cite[Definition $3.1$]{completamento I-adico}).}
\end{rem}

{\rm Denote by $\varprojlim_{t \in \mathbb{N}}^{i}$ the $i$th right derived functor of the inverse limit $\varprojlim_{t \in \mathbb{N}}$. If $\left\{M_{t}\right\}_{t \in \mathbb{N}}$ is an inverse system of linearly compact $R$-modules with continuous homomorphisms, then $\varprojlim_{t \in \mathbb{N}}^{1} M_{t} = 0$, by \cite[Theorem $7.1$]{les foncteurs derives de inverse limit}.}

\begin{prop}\label{dual} Let $M$ be an $R$-module with $R$ an Noetherian ring. Then the following statements are true.
\begin{itemize}
\item[$\left(i\right)$] For all $i \geq 0$ and for all $\mathfrak{a} \in \tilde{W}\left(I,J\right)$, the local homology module ${\rm H}_{i}^{I,J}\left(M\right)$ with respect to the pair of ideals $\left(I,J\right)$ is $\mathfrak{a}$-separated, i.e.,
\begin{center}
$\bigcap_{s > 0} \mathfrak{a}^{s}{\rm H}_{i}^{I,J}\left(M\right) = 0$, for all $\mathfrak{a} \in \tilde{W}\left(I,J\right)$.
\end{center}
\item[$\left(ii\right)$] Suppose that $\left(R,\mathfrak{m}\right)$ is a local ring. Then for all $i \geq 0$,
\begin{center}
${\rm H}_{i}^{I,J}\left({\rm D}\left(M\right)\right) \cong {\rm D}\left({\rm H}^{i}_{I,J}\left(M\right)\right)$,
\end{center}
where ${\rm D}\left(M\right) = {\rm Hom}_{R}\left(M,E\right)$ is the Matlis dual module of $M$ and $E = {\rm E}\left(R/\mathfrak{m}\right)$ is the injective envelope of the residue field $R/\mathfrak{m}$.
\end{itemize}
\end{prop}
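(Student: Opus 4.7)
The plan is as follows. For part $(i)$, I would first invoke the well-known fact that for a single ideal $\mathfrak{a}$, the module ${\rm H}_{i}^{\mathfrak{a}}\left(M\right) = \varprojlim_{t} {\rm Tor}_{i}^{R}\left(R/\mathfrak{a}^{t},M\right)$ is already $\mathfrak{a}$-separated. This follows from the observation that each ${\rm Tor}_{i}^{R}\left(R/\mathfrak{a}^{t},M\right)$ is annihilated by $\mathfrak{a}^{t}$: for any $y \in \bigcap_{s} \mathfrak{a}^{s}{\rm H}_{i}^{\mathfrak{a}}\left(M\right)$ and any $t$, a witness $y = \sum_{j} a_{j}z_{j}$ with $a_{j} \in \mathfrak{a}^{t}$ forces the $t$-th coordinate $y_{t}$ to vanish, whence $y = 0$. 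Next I would write ${\rm H}_{i}^{I,J}\left(M\right) = \varprojlim_{\mathfrak{c}\in\tilde{W}(I,J)} {\rm H}_{i}^{\mathfrak{c}}\left(M\right)$ so that any $x \in \bigcap_{s}\mathfrak{a}^{s}{\rm H}_{i}^{I,J}\left(M\right)$ has each coordinate $x_{\mathfrak{c}} \in \bigcap_{s}\mathfrak{a}^{s}{\rm H}_{i}^{\mathfrak{c}}\left(M\right)$. To conclude $x_{\mathfrak{c}} = 0$ for every $\mathfrak{c}$, I would rewrite the double inverse limit at the level of Tor-modules, ${\rm H}_{i}^{I,J}\left(M\right) \cong \varprojlim_{(\mathfrak{c},t)} {\rm Tor}_{i}^{R}\left(R/\mathfrak{c}^{t},M\right)$, and exploit that $\mathfrak{a}+\mathfrak{c}$, $\mathfrak{a}\mathfrak{c}$, and $\mathfrak{a}\cap\mathfrak{c}$ all lie in $\tilde{W}(I,J)$ in order to pass to a cofinal subsystem where the $\mathfrak{a}$-adic and $\mathfrak{c}$-adic filtrations are comparable, delivering the annihilation $\mathfrak{a}^{s}\cdot {\rm Tor}_{i}^{R}\left(R/\mathfrak{c}^{t},M\right) = 0$ at sufficiently high level.

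For part $(ii)$, the approach uses Matlis duality. Since $E$ is injective, ${\rm D}(-) = {\rm Hom}_{R}(-,E)$ is exact and converts direct limits to inverse limits. Applying ${\rm D}$ to the identification ${\rm H}^{i}_{I,J}\left(M\right) \cong \varinjlim_{\mathfrak{a}\in\tilde{W}(I,J)} {\rm H}^{i}_{\mathfrak{a}}\left(M\right)$ recalled just above the proposition gives
\[
{\rm D}\bigl({\rm H}^{i}_{I,J}\left(M\right)\bigr) \;\cong\; \varprojlim_{\mathfrak{a}\in\tilde{W}(I,J)} {\rm D}\bigl({\rm H}^{i}_{\mathfrak{a}}\left(M\right)\bigr).
\]
I would then invoke the classical single-ideal Matlis duality ${\rm D}\bigl({\rm H}^{i}_{\mathfrak{a}}(M)\bigr) \cong {\rm H}_{i}^{\mathfrak{a}}\bigl({\rm D}(M)\bigr)$, which over a Noetherian local ring follows from the description ${\rm H}^{i}_{\mathfrak{a}}(M) = \varinjlim_{t} {\rm Ext}^{i}_{R}\left(R/\mathfrak{a}^{t},M\right)$ together with the Hom-tensor adjunction ${\rm D}\bigl({\rm Ext}^{i}_{R}(R/\mathfrak{a}^{t},M)\bigr) \cong {\rm Tor}_{i}^{R}\bigl(R/\mathfrak{a}^{t},{\rm D}(M)\bigr)$ and the commutation of $\varprojlim_{t}$ with ${\rm D}$. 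Substituting into the display and recognising the right-hand side as the definition of ${\rm H}_{i}^{I,J}\left({\rm D}(M)\right)$ completes the identification.

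The main obstacle lies in part $(i)$: the projection ${\rm H}_{i}^{I,J}\left(M\right) \to {\rm H}_{i}^{\mathfrak{a}}\left(M\right)$ is not known to be injective, so the easy observation that its image lands in $\bigcap_{s}\mathfrak{a}^{s}{\rm H}_{i}^{\mathfrak{a}}\left(M\right)=0$ does not suffice; one genuinely has to kill every coordinate $x_{\mathfrak{c}}$ uniformly, which forces the cofinal-subsystem reduction and careful bookkeeping sketched above. Part $(ii)$, by contrast, is essentially routine once the single-ideal Matlis duality is accepted as input and one is careful about the limit-exchange ${\rm D} \circ \varinjlim \cong \varprojlim \circ {\rm D}$.
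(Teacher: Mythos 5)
Your part (ii) is correct and is essentially the paper's own argument: the paper likewise combines ${\rm D}\left(\varinjlim_{t} N_{t}\right) \cong \varprojlim_{t} {\rm D}\left(N_{t}\right)$ with the duality ${\rm Tor}_{i}^{R}\left(R/\mathfrak{a}^{t},{\rm D}\left(M\right)\right) \cong {\rm D}\left({\rm Ext}^{i}_{R}\left(R/\mathfrak{a}^{t},M\right)\right)$ (citing Strooker), merely writing out the single-ideal Matlis duality that you quote as a black box; nothing more is needed there.

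The genuine gap is in part (i), at exactly the step you flagged, and it cannot be repaired, because statement (i) is false in the stated generality. Your plan requires, on some cofinal family of $\mathfrak{c} \in \tilde{W}\left(I,J\right)$, an annihilation $\mathfrak{a}^{s}\,{\rm Tor}_{i}^{R}\left(R/\mathfrak{c}^{t},M\right) = 0$ for large $s$. Already for $i = 0$ and $M = R$ this forces $\mathfrak{a}^{s} \subseteq \mathfrak{c}^{t}$, i.e. $\mathfrak{a} \subseteq \sqrt{\mathfrak{c}}$. But $I^{n} \in \tilde{W}\left(I,J\right)$ for every $n$, so any cofinal subfamily must contain ideals $\mathfrak{c} \subseteq I^{n}$, whose radicals lie in $\sqrt{I}$; hence whenever $\mathfrak{a} \not\subseteq \sqrt{I}$ the annihilation fails on every cofinal subfamily. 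Replacing $\mathfrak{c}$ by $\mathfrak{a}\mathfrak{c}$ or $\mathfrak{a}\cap\mathfrak{c}$ does not help, since $\mathfrak{a} \subseteq \sqrt{\mathfrak{a}\mathfrak{c}} = \sqrt{\mathfrak{a}}\cap\sqrt{\mathfrak{c}}$ again forces $\mathfrak{a} \subseteq \sqrt{\mathfrak{c}}$; and the components at the large ideals $\mathfrak{a}+\mathfrak{c}$, where annihilation does hold, are useless because the transition maps run from small ideals to large ones, so vanishing there controls nothing cofinal. Concretely: take $R = k[[x,y]]$, $I = \left(x\right)$, $J = 0$, $\mathfrak{a} = \left(x,y\right) \in \tilde{W}\left(I,J\right)$, and $M = k\left(\left(y\right)\right)$ viewed as an $R$-module through $R \rightarrow R/\left(x\right) = k[[y]] \subseteq k\left(\left(y\right)\right)$ (this $M$ is even a semidiscrete linearly compact $R$-module, i.e. of the kind the paper concentrates on). Since $xM = 0$, every $\Lambda_{I^{n}}\left(M\right) = M$ with identity transition maps, and $\left\{I^{n}\right\}_{n}$ is cofinal in $\tilde{W}\left(I,0\right)$, so ${\rm H}_{0}^{I,J}\left(M\right) \cong M \neq 0$; on the other hand $\mathfrak{a}M = yM = M$, whence $\bigcap_{s > 0}\mathfrak{a}^{s}{\rm H}_{0}^{I,J}\left(M\right) = M \neq 0$. (Even more trivially, $R$ itself lies in $\tilde{W}\left(I,J\right)$, and for $\mathfrak{a} = R$ statement (i) would force ${\rm H}_{i}^{I,J}\left(M\right) = 0$ for every $M$.)

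You should also know that the paper's own proof of (i) contains the same gap, concealed by notation: it uses the letter $\mathfrak{a}$ both for the fixed ideal and for the running index of $\varprojlim_{\mathfrak{a} \in \tilde{W}\left(I,J\right)}$, so its concluding step, ``$\mathfrak{a}^{s}{\rm Tor}_{i}^{R}\left(R/\mathfrak{a}^{t},M\right) = 0$ for $s \geq t$,'' is valid only on the single component indexed by $\mathfrak{a}$ itself. What that computation actually proves is that $\bigcap_{s > 0}\mathfrak{a}^{s}{\rm H}_{i}^{I,J}\left(M\right)$ maps to zero under the canonical projection ${\rm H}_{i}^{I,J}\left(M\right) \rightarrow {\rm H}_{i}^{\mathfrak{a}}\left(M\right)$ --- precisely the non-injectivity problem you correctly identified as the crux. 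What survives of (i) is: the simultaneous intersection $\bigcap_{\mathfrak{a} \in \tilde{W}\left(I,J\right)}\bigcap_{s > 0}\mathfrak{a}^{s}{\rm H}_{i}^{I,J}\left(M\right)$ is zero, and the stated separatedness holds for those $\mathfrak{a}$ admitting a cofinal family of $\mathfrak{c} \in \tilde{W}\left(I,J\right)$ with $\mathfrak{a} \subseteq \sqrt{\mathfrak{c}}$ (for instance $\mathfrak{a} = I$ when $J = 0$, recovering the classical $I$-separatedness of ${\rm H}_{i}^{I}\left(M\right)$). Since the paper reuses (i) for arbitrary $\mathfrak{a} \in \tilde{W}\left(I,J\right)$ (e.g. in Lemma \ref{estavel}, Proposition \ref{co-localizacao nula}, Theorem \ref{teorema de generalization of local homology functors} and the vanishing results of Section 3), the defect propagates; but the point for your proposal is that no cofinality bookkeeping can establish (i) as stated, because it is not true.
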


\begin{proof}
$\left(i\right)$ Note first that for any inverse system of $R$-modules $\left\{M_{t}\right\}_{t \in \mathbb{N}}$, we have that
\begin{center}
$\mathfrak{a} \varprojlim_{t \in \mathbb{N}} M_{t}\subseteq \varprojlim_{t \in \mathbb{N}} \mathfrak{a}M_{t}$, for all $\mathfrak{a} \in \tilde{W}\left(I,J\right)$.
\end{center}
Thus,
\begin{center}
$\bigcap_{s > 0} \mathfrak{a}^{s}{\rm H}_{i}^{I,J}\left(M\right) \cong \varprojlim_{s \in \mathbb{N}} \mathfrak{a}^{s} \varprojlim_{\mathfrak{a} \in \tilde{W}\left(I,J\right)} {\rm H}_{i}^{\mathfrak{a}}\left(M\right)$,
\end{center}
and by initial observation we have that
\begin{center}
$\varprojlim_{s \in \mathbb{N}} \mathfrak{a}^{s} \varprojlim_{\mathfrak{a} \in \tilde{W}\left(I,J\right)} {\rm H}_{i}^{\mathfrak{a}}\left(M\right)\subseteq \varprojlim_{s \in \mathbb{N}} \varprojlim_{\mathfrak{a} \in \tilde{W}\left(I,J\right)} \mathfrak{a}^{s}{\rm H}_{i}^{\mathfrak{a}}\left(M\right)$,
\end{center}
where ${\rm H}_{i}^{\mathfrak{a}}\left(M\right) = \varprojlim_{t \in \mathbb{N}} {\rm Tor}_{i}^{R}\left(R/\mathfrak{a}^{t},M\right)$. Therefore,
\begin{center}
$\bigcap_{s > 0} \mathfrak{a}^{s}{\rm H}_{i}^{I,J}\left(M\right)\subseteq \varprojlim_{s \in \mathbb{N}} \varprojlim_{\mathfrak{a} \in \tilde{W}\left(I,J\right)} \varprojlim_{t \in \mathbb{N}} \mathfrak{a}^{s}{\rm Tor}_{i}^{R}\left(R/\mathfrak{a}^{t},M\right)$.
\end{center}
As any two inverse limits commute \cite[Theorem $2.26$]{rotman} we have that
$$
\begin{array}{lll}
\bigcap_{s > 0} \mathfrak{a}^{s}{\rm H}_{i}^{I,J}\left(M\right)&\subseteq &\varprojlim_{\mathfrak{a} \in \tilde{W}\left(I,J\right)} \varprojlim_{t \in \mathbb{N}} \varprojlim_{s \in \mathbb{N}} \mathfrak{a}^{s}{\rm Tor}_{i}^{R}\left(R/\mathfrak{a}^{t},M\right)\\& =& 0,
\end{array}
$$
since $\mathfrak{a}^{s}{\rm Tor}_{i}^{R}\left(R/\mathfrak{a}^{t},M\right) = 0$, for all $s \geq t$. Thus, $\bigcap_{s > 0} \mathfrak{a}^{s}{\rm H}_{i}^{I,J}\left(M\right) = 0$.\newline
$\left(ii\right)$ We have that for a direct system of $R$-modules $\left\{N_{t}\right\}_{t \in \mathbb{N}}$,
\begin{center}
$\varprojlim_{t \in \mathbb{N}} {\rm D}\left(N_{t}\right)\cong {\rm D}\left(\varinjlim_{t \in \mathbb{N}} N_{t}\right)$, by \cite[Theorem $2.27$]{rotman} and
\end{center}
\begin{center}
${\rm Tor}_{i}^{R}\left(R/\mathfrak{a}^{t},{\rm D}\left(M\right)\right) = {\rm D}\left({\rm Ext}^{i}_{R}\left(R/\mathfrak{a}^{t},M\right)\right)$ for all $\mathfrak{a} \in \tilde{W}\left(I,J\right)$
\end{center}
by \cite[Proposition 3.4.14 (ii)]{strooker}. Thus,
$$
\begin{array}{lll}
{\rm H}_{i}^{I,J}\left({\rm D}\left(M\right)\right) &\cong &\varprojlim_{\mathfrak{a} \in \tilde{W}\left(I,J\right)} {\rm H}_{i}^{\mathfrak{a}}\left({\rm D}\left(M\right)\right)\\ & =& \varprojlim_{\mathfrak{a} \in \tilde{W}\left(I,J\right)} \varprojlim_{t \in \mathbb{N}} {\rm Tor}_{i}^{R}\left(R/\mathfrak{a}^{t},{\rm D}\left(M\right)\right)
\end{array}
$$
and so we have that,
\begin{center}
${\rm H}_{i}^{I,J}\left({\rm D}\left(M\right)\right) \cong \varprojlim_{\mathfrak{a} \in \tilde{W}\left(I,J\right)} \varprojlim_{t \in \mathbb{N}} {\rm D}\left({\rm Ext}^{i}_{R}\left(R/\mathfrak{a}^{t},M\right)\right)$.
\end{center}
Therefore,
$$
\begin{array}{lll}
{\rm H}_{i}^{I,J}\left({\rm D}\left(M\right)\right)& \cong & \varprojlim_{\mathfrak{a} \in \tilde{W}\left(I,J\right)} {\rm D}\left(\varinjlim_{t \in \mathbb{N}}{\rm Ext}^{i}_{R}\left(R/\mathfrak{a}^{t},M\right)\right)\\& \cong & \varprojlim_{\mathfrak{a} \in \tilde{W}\left(I,J\right)} {\rm D}\left({\rm H}^{i}_{\mathfrak{a}}\left(M\right)\right).
\end{array}
$$
Thus,
\begin{center}
${\rm H}_{i}^{I,J}\left({\rm D}\left(M\right)\right) \cong {\rm D}\left(\varinjlim_{\mathfrak{a} \in \tilde{W}\left(I,J\right)} {\rm H}^{i}_{\mathfrak{a}}\left(M\right)\right) \cong {\rm D}\left({\rm H}^{i}_{I,J}\left(M\right)\right)$,
\end{center}
where, $\varinjlim_{\mathfrak{a} \in \tilde{W}\left(I,J\right)} {\rm H}^{i}_{\mathfrak{a}}\left(M\right) \cong {\rm H}^{i}_{I,J}\left(M\right)$, for all $i \geq 0$, by \cite[Theorem $3.2$]{cohomologia local de um par de ideais}. Thus, ${\rm H}_{i}^{I,J}\left({\rm D}\left(M\right)\right) \cong {\rm D}\left({\rm H}^{i}_{I,J}\left(M\right)\right)$.
\end{proof}

{\rm The following result is a generalization, to the case of a pair of ideals, of \cite[Corollary 2.3]{a generalization of local homology functors}.}

\begin{cor}\label{Corollary2.3} Let $M$ be an $R$-module where $R$ is a local ring and Noetherian. Then we have,
\begin{itemize}
\item[$\left(i\right)$] ${\rm H}^{i}_{I,J}\left(M\right) = 0$ if and only if ${\rm H}_{i}^{I,J}\left({\rm D}(M)\right) = 0$.
\item[$\left(ii\right)$] If $M$ is an Artinian $R$-module, then ${\rm H}_i^{I,J}\left(M\right)=0$ if and only if ${\rm H}^{i}_{I,J}\left({\rm D}(M)\right) = 0$.
\end{itemize}
\end{cor}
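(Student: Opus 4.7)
The plan is to reduce both parts to Proposition \ref{dual}(ii), combined with the observation that the Matlis dual functor ${\rm D}(-) = {\rm Hom}_R(-, E)$ is faithful, i.e., ${\rm D}(N) = 0$ if and only if $N = 0$ for every $R$-module $N$. This faithfulness holds because $E = {\rm E}(R/\mathfrak{m})$ is an injective cogenerator of the category of $R$-modules over a local ring: for any non-zero $n \in N$, Nakayama's lemma applied to the cyclic submodule $Rn$ gives $Rn/\mathfrak{m}(Rn) \neq 0$, producing a non-zero homomorphism $Rn \to R/\mathfrak{m} \hookrightarrow E$, which extends to a non-zero homomorphism $N \to E$ by injectivity of $E$.

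For (i), the forward implication is immediate since ${\rm D}$ sends $0$ to $0$. Conversely, assuming ${\rm H}_i^{I,J}({\rm D}(M)) = 0$, Proposition \ref{dual}(ii) forces ${\rm D}({\rm H}^{i}_{I,J}(M)) = 0$, and hence ${\rm H}^{i}_{I,J}(M) = 0$ by faithfulness of ${\rm D}$.

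For (ii), I would substitute ${\rm D}(M)$ in place of $M$ in Proposition \ref{dual}(ii) to obtain the isomorphism ${\rm H}_i^{I,J}({\rm D}({\rm D}(M))) \cong {\rm D}({\rm H}^{i}_{I,J}({\rm D}(M)))$. Since $M$ is Artinian, Matlis biduality supplies a natural isomorphism ${\rm D}({\rm D}(M)) \cong M$; here I rely on the fact that Artinian $R$-modules carry a canonical $\hat{R}$-module structure and $E_R(R/\mathfrak{m}) = E_{\hat{R}}(\hat{R}/\hat{\mathfrak{m}})$, so the classical Matlis duality over the $\mathfrak{m}$-adic completion applies. Consequently ${\rm H}_i^{I,J}(M) \cong {\rm D}({\rm H}^{i}_{I,J}({\rm D}(M)))$, and faithfulness of ${\rm D}$ again yields the biconditional.

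The only point requiring genuine care is the biduality ${\rm D}({\rm D}(M)) \cong M$ invoked in (ii), since the paper does not assume $R$ is complete; once this is in place, the rest is a purely formal consequence of Proposition \ref{dual}(ii). I do not expect any obstacle beyond citing the appropriate form of Matlis duality for Artinian modules.
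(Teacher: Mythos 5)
Your proposal is correct and takes essentially the same route as the paper: both arguments reduce the corollary to Proposition \ref{dual}(ii) combined with the faithfulness of ${\rm D}$ and, for part (ii), the Matlis biduality ${\rm D}\left({\rm D}\left(M\right)\right) \cong M$ for Artinian $M$ (which the paper simply cites as Ooishi's Theorem 1.6(5)). The only difference is that you prove the two background facts (faithfulness via the injective cogenerator property, biduality via the canonical $\hat{R}$-structure on Artinian modules) rather than quoting them, which is a matter of detail, not of method.
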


\begin{proof}
It is well know that for a $R$-module $N$ we have that, $N = 0$ if and only if ${\rm D}\left(N\right) = 0$. The corollary is now immediate from \cite[Theorem $1.6$,$\left(5\right)$]{Ooishi}.
\end{proof}

\begin{prop}\label{comutatividade} Let $\left\{M_{s}\right\}_{s \in \mathbb{N}}$ be an inverse system of linearly compact $R$-modules with the continuous homomorphisms. Then,
\begin{center}
${\rm H}_{i}^{I,J}\left(\varprojlim_{s \in \mathbb{N}} M_{s}\right) \cong \varprojlim_{s \in \mathbb{N}} {\rm H}_{i}^{I,J}\left(M_{s}\right)$.
\end{center}
\end{prop}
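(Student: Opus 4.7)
By definition,
$$\mathrm{H}_i^{I,J}(N) = \varprojlim_{\mathfrak{a} \in \tilde W(I,J)} \varprojlim_{t \in \mathbb{N}} \mathrm{Tor}_i^R(R/\mathfrak{a}^t, N),$$
so after substituting $N = \varprojlim_{s} M_s$ the only nontrivial step is to move the inverse limit $\varprojlim_{s}$ past the Tor functor. Concretely, the argument reduces to establishing the natural isomorphism
$$\mathrm{Tor}_i^R\bigl(R/\mathfrak{a}^t, \varprojlim_{s} M_s\bigr) \;\cong\; \varprojlim_{s} \mathrm{Tor}_i^R(R/\mathfrak{a}^t, M_s)$$
for each fixed $\mathfrak{a} \in \tilde W(I,J)$ and integer $t \geq 1$. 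Once this is in hand, the conclusion follows by inserting it inside the nested double limit over $(\mathfrak{a},t)$ and then interchanging $\varprojlim_{s}$ with $\varprojlim_{\mathfrak{a}}$ and $\varprojlim_{t}$ using the commutativity of inverse limits \cite[Theorem $2.26$]{rotman}, producing $\varprojlim_{s} \mathrm{H}_i^{I,J}(M_s)$.

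To establish the Tor commutation, I would compute $\mathrm{Tor}_i^R(R/\mathfrak{a}^t,-)$ from a resolution $\mathbf{F}_\bullet \to R/\mathfrak{a}^t$ by finitely generated free $R$-modules, which is available under the paper's standing Noetherian hypothesis. Writing $F_n = R^{k_n}$, tensoring with an arbitrary $R$-module is a finite direct sum, and finite direct sums commute with every inverse limit, giving the identification of complexes
$$\mathbf{F}_\bullet \otimes_R \varprojlim_{s} M_s \;\cong\; \varprojlim_{s}\bigl(\mathbf{F}_\bullet \otimes_R M_s\bigr).$$
It then remains to move $\varprojlim_{s}$ inside the homology functor, i.e., past the kernel-modulo-image operation.

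The pivotal step is the exactness of $\varprojlim_{s}$ on the inverse system at hand. For each $n$, the modules $F_n \otimes_R M_s \cong M_s^{k_n}$ are finite direct sums of linearly compact $R$-modules with continuous transition maps, hence again form an inverse system of linearly compact modules with continuous homomorphisms. The vanishing $\varprojlim_{s}^{1} = 0$ recorded just before Proposition \ref{dual} (cited from \cite{les foncteurs derives de inverse limit}) then makes $\varprojlim_{s}$ exact on this system, and an exact functor commutes with taking homology of the complex $\mathbf{F}_\bullet \otimes_R M_s$. Stitching the identifications together yields the required Tor commutation.

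The main obstacle I anticipate is purely bookkeeping: verifying the hypotheses of the $\varprojlim_{s}^{1}=0$ vanishing for the tensored complex, namely that linear compactness and continuity of the transition maps are inherited by $\{F_n \otimes_R M_s\}_{s}$ from $\{M_s\}_{s}$. This should follow from the same lemmas of \cite{uma teoria de homologia local para modulo linearmente compacto} already used in the proof of Lemma \ref{limite inverso comuta}, specialized to the case of a finite direct sum; the rest of the proof is a routine rearrangement of nested inverse limits.
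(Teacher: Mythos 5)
Your proposal is correct and takes essentially the same route as the paper's proof: unfold the definition of ${\rm H}_{i}^{I,J}$, commute ${\rm Tor}_{i}^{R}\left(R/\mathfrak{a}^{t},-\right)$ with $\varprojlim_{s}$, and then interchange the nested inverse limits by \cite[Theorem $2.26$]{rotman}. The single point of divergence is that the paper obtains the key isomorphism ${\rm Tor}_{i}^{R}\left(R/\mathfrak{a}^{t},\varprojlim_{s} M_{s}\right) \cong \varprojlim_{s} {\rm Tor}_{i}^{R}\left(R/\mathfrak{a}^{t},M_{s}\right)$ by citing \cite[Lemma $2.7$]{uma teoria de homologia local para modulo linearmente compacto} as a black box, whereas you re-derive it from a finitely generated free resolution of $R/\mathfrak{a}^{t}$ together with the exactness of $\varprojlim_{s}$ on inverse systems of linearly compact modules; that derivation is sound --- it is in substance the proof of the cited lemma, and it is the same technique the paper itself uses to prove Lemma \ref{limite inverso comuta} --- so your version buys self-containedness at the cost of the bookkeeping (cycles, boundaries and homologies of the tensored complexes are again linearly compact with continuous transition maps) that the citation packages away.
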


\begin{proof} By definition we have that
\begin{center}
${\rm H}_{i}^{I,J}\left(\varprojlim_{s \in \mathbb{N}} M_{s}\right) = \varprojlim_{\mathfrak{a} \in \tilde{W}\left(I,J\right)} {\rm H}_{i}^{\mathfrak{a}}\left(\varprojlim_{s \in \mathbb{N}} M_{s}\right)$.
\end{center}
Now, note that
\begin{center}
${\rm H}_{i}^{\mathfrak{a}}\left(\varprojlim_{s \in \mathbb{N}} M_{s}\right) = \varprojlim_{t \in \mathbb{N}} {\rm Tor}_{i}^{R}\left(R/\mathfrak{a}^{t},\varprojlim_{s \in \mathbb{N}} M_{s}\right)$,
\end{center}
and by \cite[Lemma $2.7$]{uma teoria de homologia local para modulo linearmente compacto} we have that:
\begin{center}
${\rm Tor}_{i}^{R}\left(R/\mathfrak{a}^{t},\varprojlim_{s \in \mathbb{N}} M_{s}\right) \cong \varprojlim_{s \in \mathbb{N}} {\rm Tor}_{i}^{R}\left(R/\mathfrak{a}^{t},M_{s}\right)$.
\end{center}
Therefore,
\begin{center}
${\rm H}_{i}^{\mathfrak{a}}\left(\varprojlim_{s \in \mathbb{N}} M_{s}\right) \cong \varprojlim_{t \in \mathbb{N}} \varprojlim_{s \in \mathbb{N}} {\rm Tor}_{i}^{R}\left(R/\mathfrak{a}^{t},M_{s}\right)$.
\end{center}
Since by \cite[Theorem $2.26$]{rotman} inverse limits are commuted, we have that
\begin{center}
${\rm H}_{i}^{\mathfrak{a}}\left(\varprojlim_{s \in \mathbb{N}} M_{s}\right) \cong \varprojlim_{s \in \mathbb{N}} \varprojlim_{t \in \mathbb{N}} {\rm Tor}_{i}^{R}\left(R/\mathfrak{a}^{t},M_{s}\right) = \varprojlim_{s \in \mathbb{N}} {\rm H}_{i}^{\mathfrak{a}}\left(M_{s}\right)$.
\end{center}
Therefore,
\begin{center}
${\rm H}_{i}^{I,J}\left(\varprojlim_{s \in \mathbb{N}} M_{s}\right) = \varprojlim_{\mathfrak{a} \in \tilde{W}\left(I,J\right)} \varprojlim_{s \in \mathbb{N}} {\rm H}_{i}^{\mathfrak{a}}\left(M_{s}\right)$,
\end{center}
and then
\begin{center}
${\rm H}_{i}^{I,J}\left(\varprojlim_{s \in \mathbb{N}} M_{s}\right) \cong \varprojlim_{s \in \mathbb{N}} \varprojlim_{\mathfrak{a} \in \tilde{W}\left(I,J\right)} {\rm H}_{i}^{\mathfrak{a}}\left(M_{s}\right) = \varprojlim_{s \in \mathbb{N}} {\rm H}_{i}^{I,J}\left(M_{s}\right)$,
\end{center}
as required.
\end{proof}

In the following theorem, we assume that $f: R\rightarrow R^{'}$ is a homomorphism of rings. Also, for an ideal $I$ of $R$, we denote its extension to $R^{'}$ by $I^{e}$.

\begin{thm}\label{extensao}Let $R$ be a Noetherian ring. Let $M$ be a linearly compact $R^{'}$-module with $\left(R^{'},\mathfrak{m}\right)$ local Noetherian ring. Furthermore, let $f: R\rightarrow R^{'}$ be a ring homomorphism such that $f\left(J\right) = JR^{'} = J^{e}$. Then we have the following isomorphism of $R^{'}$-modules
\begin{center}
${\rm H}_{i}^{I,J}\left(M\right) \cong {\rm H}_{i}^{I^{e},J^{e}}\left(M\right)$
\end{center}
for all $0 \leq i \in \mathbb{Z}$.
\end{thm}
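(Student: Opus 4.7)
My strategy has two parts, combined by a final assembly. First, I will show that $\{\mathfrak{a}^{e}:\mathfrak{a}\in\tilde{W}(I,J)\}$ is cofinal in $\tilde{W}(I^{e},J^{e})$, so that the outer limit defining ${\rm H}_{i}^{I^{e},J^{e}}(M)$ can be rewritten as a limit over $\tilde{W}(I,J)$. Second, for each $\mathfrak{a}\in\tilde{W}(I,J)$, I will identify ${\rm H}_{i}^{\mathfrak{a}}(M)\cong {\rm H}_{i}^{\mathfrak{a}^{e}}(M)$ as $R'$-modules. Assembling gives $\varprojlim_{\mathfrak{a}} {\rm H}_{i}^{\mathfrak{a}}(M)\cong \varprojlim_{\mathfrak{a}} {\rm H}_{i}^{\mathfrak{a}^{e}}(M)\cong\varprojlim_{\mathfrak{b}\in\tilde{W}(I^{e},J^{e})} {\rm H}_{i}^{\mathfrak{b}}(M)$, which is the stated isomorphism.

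For the cofinality, the forward direction is immediate by extending $I^{n}\subseteq\mathfrak{a}+J$ to $R'$. Conversely, given $\mathfrak{b}\in\tilde{W}(I^{e},J^{e})$, taking $\mathfrak{a}=f^{-1}(\mathfrak{b})$ yields $\mathfrak{a}^{e}\subseteq\mathfrak{b}$ automatically. Membership $\mathfrak{a}\in\tilde{W}(I,J)$ is where the hypothesis $f(J)=J^{e}$ becomes essential: if $(I^{e})^{n}\subseteq\mathfrak{b}+J^{e}$, then for $r\in I^{n}$ one has $f(r)=b+y$ with $b\in\mathfrak{b}$ and $y\in J^{e}=f(J)$; writing $y=f(j)$ with $j\in J$ places $r-j$ in $f^{-1}(\mathfrak{b})=\mathfrak{a}$, so $r\in\mathfrak{a}+J$ and $I^{n}\subseteq\mathfrak{a}+J$.

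For the termwise identification, the starting point is that the $R$-action on $M$ factors through $f$, so $\mathfrak{a}^{t}M=(\mathfrak{a}^{e})^{t}M$ for every $t\geq 1$; this settles the $i=0$ case, both sides equalling $\varprojlim_{t}M/\mathfrak{a}^{t}M$. For $i\geq 1$ I would compare the inverse systems $\{{\rm Tor}_{i}^{R}(R/\mathfrak{a}^{t},M)\}_{t}$ and $\{{\rm Tor}_{i}^{R'}(R'/(\mathfrak{a}^{e})^{t},M)\}_{t}$ via the base-change identification $R/\mathfrak{a}^{t}\otimes_{R}R'=R'/(\mathfrak{a}^{e})^{t}$, invoking that both systems consist of linearly compact $R'$-modules with continuous transition maps and combining the vanishing of $\varprojlim^{1}$ from \cite[Theorem $7.1$]{les foncteurs derives de inverse limit} with \cite[Lemma $2.7$]{uma teoria de homologia local para modulo linearmente compacto} to force the two inverse limits to coincide. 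This $i\geq 1$ step is the main obstacle: since $f$ is not assumed flat, the individual Tor modules need not agree at a fixed $t$, so the isomorphism must emerge only after passing to $\varprojlim_{t}$, and extracting it requires essential use of the linearly compact hypothesis on $M$.
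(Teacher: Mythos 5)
Your architecture is genuinely different from the paper's. The paper never compares the two index sets $\tilde{W}\left(I,J\right)$ and $\tilde{W}\left(I^{e},J^{e}\right)$ directly: it first reduces to the Artinian case by writing $M = \varprojlim_{U \in \mathfrak{M}} \left(M/U\right)$ and invoking Proposition \ref{comutatividade}, and then, for Artinian $M$, transfers the whole problem to local \emph{cohomology} via Matlis duality (Proposition \ref{dual}, item $\left(ii\right)$), quotes the known base-change theorem \cite[Theorem $2.7$]{cohomologia local de um par de ideais} there, and dualizes back. Your first step --- cofinality of $\left\{\mathfrak{a}^{e} \;|\; \mathfrak{a} \in \tilde{W}\left(I,J\right)\right\}$ in $\tilde{W}\left(I^{e},J^{e}\right)$, with the converse direction handled by $\mathfrak{a} = f^{-1}\left(\mathfrak{b}\right)$ --- is correct as written, and it isolates exactly where the hypothesis $f\left(J\right) = J^{e}$ is needed (if $J^{e}$ were merely the ideal generated by $f\left(J\right)$, the decomposition $f\left(r\right) = b + f\left(j\right)$ would fail). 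This is a clean, self-contained alternative to quoting the cohomological theorem; it is in fact the same combinatorial core as in Takahashi--Yoshino--Yoshizawa's own argument.

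The gap is your second step, the termwise identification ${\rm H}_{i}^{\mathfrak{a}}\left(M\right) \cong {\rm H}_{i}^{\mathfrak{a}^{e}}\left(M\right)$ for $i \geq 1$, and you essentially concede it. The systems $\left\{{\rm Tor}_{i}^{R}\left(R/\mathfrak{a}^{t},M\right)\right\}_{t}$ and $\left\{{\rm Tor}_{i}^{R^{'}}\left(R^{'}/\left(\mathfrak{a}^{e}\right)^{t},M\right)\right\}_{t}$ are different inverse systems, and the tools you invoke --- vanishing of $\varprojlim^{1}$ for linearly compact systems \cite[Theorem $7.1$]{les foncteurs derives de inverse limit} and commutation of Tor with inverse limits \cite[Lemma $2.7$]{uma teoria de homologia local para modulo linearmente compacto} --- are exactness statements about a \emph{single} system; they produce no comparison map between the two systems, let alone show that a map becomes an isomorphism after $\varprojlim_{t}$. (There is a natural base-change map ${\rm Tor}_{i}^{R}\left(R/\mathfrak{a}^{t},M\right) \rightarrow {\rm Tor}_{i}^{R^{'}}\left(R^{'}/\left(\mathfrak{a}^{e}\right)^{t},M\right)$, but without flatness of $f$ nothing you cite controls its kernel or cokernel in the limit.) To close this you need an actual mechanism, for instance: $\left(a\right)$ the Koszul-complex description of local homology of linearly compact modules over a Noetherian ring --- writing $\mathfrak{a} = \left(x_{1},\ldots,x_{r}\right)$, the Koszul homology of $f\left(x_{1}\right)^{t},\ldots,f\left(x_{r}\right)^{t}$ acting on $M$ does not depend on whether the scalars are taken in $R$ or $R^{'}$, and $\mathfrak{a}^{e}$ is generated by the $f\left(x_{j}\right)$; or $\left(b\right)$ Matlis duality applied ideal-by-ideal together with the single-ideal independence theorem for local cohomology, which is essentially the paper's proof in disguise. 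In either case you must also verify that the termwise isomorphisms commute with the transition maps as $\mathfrak{a}$ varies, since otherwise the final assembly $\varprojlim_{\mathfrak{a}} {\rm H}_{i}^{\mathfrak{a}}\left(M\right) \cong \varprojlim_{\mathfrak{a}} {\rm H}_{i}^{\mathfrak{a}^{e}}\left(M\right)$ does not follow from objectwise isomorphisms alone.
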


\begin{proof}
We first prove in the special case $M$ is an Artinian $R^{'}$-module. In this case, by \cite[Theorem $1.6$, item $\left(5\right)$]{Ooishi}, we have that ${\rm D}\left({\rm D}\left(M\right)\right) \cong M$. Therefore, ${\rm H}_{i}^{I,J}\left(M\right) \cong {\rm H}_{i}^{I,J}\left({\rm D}\left({\rm D}\left(M\right)\right)\right) \cong {\rm D}\left({\rm H}^{i}_{I,J}\left({\rm D}\left(M\right)\right)\right)$, by Proposition \ref{dual}, item $\left(ii\right)$. Now, by \cite[Theorem $2.7$]{cohomologia local de um par de ideais}, we have that ${\rm H}^{i}_{I,J}\left({\rm D}\left(M\right)\right) \cong {\rm H}^{i}_{I^{e},J^{e}}\left({\rm D}\left(M\right)\right)$. Thus, ${\rm H}_{i}^{I,J}\left(M\right) \cong {\rm D}\left({\rm H}^{i}_{I^{e},J^{e}}\left({\rm D}\left(M\right)\right)\right) \cong {\rm H}_{i}^{I^{e},J^{e}}\left(M\right)$, by Proposition \ref{dual}, item $\left(ii\right)$, for all $i \geq 0$.

Let $M$ be a linearly compact $R^{'}$-module. In this case denote by $\mathfrak{M}$ a nuclear base of $M$. It follows from \cite[Property $4.7$]{macdonald} that $M = \varprojlim_{U \in \mathfrak{M}} \left(M/U\right)$, where the modules $M/U$ ($U \in \mathfrak{M}$) are Artinian $R$-modules. In virtue of Proposition \ref{comutatividade} we have that
\begin{center}
${\rm H}_{i}^{I,J}\left(M\right) = {\rm H}_{i}^{I,J}\left(\varprojlim_{U \in \mathfrak{M}} \left(M/U\right)\right) \cong \varprojlim_{U \in \mathfrak{M}} {\rm H}_{i}^{I,J}\left(M/U\right)$
\end{center}
e
\begin{center}
${\rm H}_{i}^{I^{e},J^{e}}\left(M\right) = {\rm H}_{i}^{I^{e},J^{e}}\left(\varprojlim_{U \in \mathfrak{M}} \left(M/U\right)\right) \cong \varprojlim_{U \in \mathfrak{M}} {\rm H}_{i}^{I^{e},J^{e}}\left(M/U\right)$.
\end{center}
By our claim above ${\rm H}_{i}^{I,J}\left(M/U\right) \cong {\rm H}_{i}^{I^{e},J^{e}}\left(M/U\right)$ for all $i \geq 0$. Applying then $\varprojlim_{U \in \mathfrak{M}}$ we obtain that ${\rm H}_{i}^{I,J}\left(M\right) \cong {\rm H}_{i}^{I^{e},J^{e}}\left(M\right)$, for all $i \geq 0$, as required.
\end{proof}


In the next result for $N$ a $R$-module we denote by $\hat{N}$ the $\mathfrak{m}$-adic completion of $N$. Moreover, note that if $N$ is an Artinian module over a local ring $R$, then $N$ has a natural structure as an Artinian module over $\hat{R}$, according to \cite[$1.11$]{sharp}.

\begin{cor}\label{completo} Let $\left(R,\mathfrak{m}\right)$ be a local Noetherian ring with the $\mathfrak{m}$-adic topology, $\phi: R\rightarrow \hat{R}$ be a ring homomorphism and $M$ an Artinian $R$-module. Suppose that $\phi$ satisfies the equality $\phi\left(J\right) = J\hat{R} = \hat{J}$. Then
\begin{center}
${\rm H}_{i}^{I,J}\left(M\right) \cong {\rm H}_{i}^{\hat{I},\hat{J}}\left(M\right)$ \ \ \ \ (as $R$-modules)
\end{center}
for all $0 \leq i \in \mathbb{Z}$.
\end{cor}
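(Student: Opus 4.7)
The plan is to deduce this corollary as a direct application of Theorem \ref{extensao} with $R^{'} = \hat{R}$ and $f = \phi$. First I would verify that all the hypotheses of that theorem are present in the current setting. The ring $R$ is Noetherian by assumption; the completion $\hat{R}$ is a local Noetherian ring with maximal ideal $\hat{\mathfrak{m}}$; and the hypothesis $\phi(J) = J\hat{R} = \hat{J}$ is exactly the extension condition $f(J) = JR^{'} = J^{e}$ required by Theorem \ref{extensao}. The extended ideals are $I^{e} = I\hat{R} = \hat{I}$ and $J^{e} = J\hat{R} = \hat{J}$.

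The one nontrivial input is the requirement that $M$ be a linearly compact $\hat{R}$-module. Here I would invoke the remark preceding the corollary, which tells us that if $M$ is an Artinian $R$-module over a local ring $R$, then $M$ acquires a natural Artinian $\hat{R}$-module structure via $\phi$, following \cite[$1.11$]{sharp}. Since Artinian modules are linearly compact when equipped with the discrete topology, $M$ becomes a linearly compact $\hat{R}$-module, and the original $R$-action on $M$ coincides with the restriction of the $\hat{R}$-action along $\phi$.

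With all hypotheses verified, Theorem \ref{extensao} yields the isomorphism
\[
{\rm H}_{i}^{I,J}(M) \;\cong\; {\rm H}_{i}^{I^{e},J^{e}}(M) \;=\; {\rm H}_{i}^{\hat{I},\hat{J}}(M)
\]
for every $i \geq 0$, initially as $\hat{R}$-modules. Restriction of scalars along $\phi$ then gives the asserted $R$-module isomorphism. The main obstacle is essentially bookkeeping of module structures rather than any serious computation: one must be careful that the Artinian $R$-structure and the induced Artinian $\hat{R}$-structure agree after restriction along $\phi$, so that the two local homology modules (defined a priori with respect to different base rings) can be compared as $R$-modules; once this is noted, the result follows immediately from Theorem \ref{extensao}.
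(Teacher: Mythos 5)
Your proposal is correct, but it does not follow the paper's own route: the paper never invokes Theorem \ref{extensao} in its proof of Corollary \ref{completo}. Instead, it re-runs (with $\hat{R}$ in place of $R'$) the Matlis-duality argument that constitutes the Artinian special case of that theorem's proof: since $M$ is Artinian, ${\rm D}\left({\rm D}\left(M\right)\right) \cong M$ by \cite[Theorem $1.6$, item $\left(5\right)$]{Ooishi}; Proposition \ref{dual}, item $\left(ii\right)$, converts ${\rm H}_{i}^{I,J}\left(M\right)$ into ${\rm D}\left({\rm H}^{i}_{I,J}\left({\rm D}\left(M\right)\right)\right)$; the change-of-rings theorem for local cohomology with respect to a pair of ideals \cite[Theorem $2.7$]{cohomologia local de um par de ideais} yields ${\rm H}^{i}_{I,J}\left({\rm D}\left(M\right)\right) \cong {\rm H}^{i}_{\hat{I},\hat{J}}\left({\rm D}\left(M\right)\right)$; and a second application of Proposition \ref{dual}, item $\left(ii\right)$, recovers ${\rm H}_{i}^{\hat{I},\hat{J}}\left(M\right)$. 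You instead treat the corollary as a genuine corollary: you check the hypotheses of Theorem \ref{extensao} for $R' = \hat{R}$, $f = \phi$, the only nontrivial one being that $M$ is a linearly compact $\hat{R}$-module, which you get from the natural Artinian $\hat{R}$-structure of \cite[$1.11$]{sharp} (compatible with the original $R$-action) plus the fact that Artinian modules are linearly compact and discrete, and you correctly note that the $\hat{R}$-module isomorphism of the theorem restricts along $\phi$ to the asserted $R$-module isomorphism. Your route buys economy and the expected logical architecture: no duplication of the duality argument, and the corollary is visibly the specialization $R' = \hat{R}$ of the theorem, with the transfer of module structure isolated as the only new ingredient. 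The paper's route buys self-containedness at the level of tools: it needs only the duality machinery for Artinian modules, not the full statement of Theorem \ref{extensao}; and indeed, since your $M$ is Artinian over $\hat{R}$, unrolling your appeal to Theorem \ref{extensao} lands exactly in its first (Artinian) case, so the two proofs coincide once expanded.
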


\begin{proof}
Since $M$ is an Artinian $R$-module, by \cite[Theorem $1.6$, item $\left(5\right)$]{Ooishi}, we have that ${\rm D}\left({\rm D}\left(M\right)\right) \cong M$. Therefore, ${\rm H}_{i}^{I,J}\left(M\right) \cong {\rm H}_{i}^{I,J}\left({\rm D}\left({\rm D}\left(M\right)\right)\right) \cong {\rm D}\left({\rm H}^{i}_{I,J}\left({\rm D}\left(M\right)\right)\right)$, by Proposition \ref{dual}, item $\left(ii\right)$. Thus, by \cite[Theorem $2.7$]{cohomologia local de um par de ideais}, we have that ${\rm H}^{i}_{I,J}\left({\rm D}\left(M\right)\right) \cong {\rm H}^{i}_{\hat{I},\hat{J}}\left({\rm D}\left(M\right)\right)$ as $R$-modules. Therefore, ${\rm H}_{i}^{I,J}\left(M\right) \cong {\rm D}\left({\rm H}^{i}_{\hat{I},\hat{J}}\left({\rm D}\left(M\right)\right)\right) \cong {\rm H}_{i}^{\hat{I},\hat{J}}\left(M\right)$, by Proposition \ref{dual}, item $\left(ii\right)$ for all $i \geq 0$ as $R$-modules, as required.
\end{proof}

\begin{rem}\label{observacao de ryo takahashi}{\rm (\cite[Example of the Theorem 2.7]{cohomologia local de um par de ideais})  Here we should remark that the hypothesis $\phi\left(J\right) = JR^{'}$ in the Theorem \ref{extensao} and Corollary \ref{completo} is necessary. Let $k$ be a field, $R = k\left[x,y\right]$ and $R^{'} = k\left[x,y,z\right]/\left(xz - yz^{2}\right)$. Set $I = xR$ and $J = yR$. For a natural ring homomorphism $\phi$ from $R$ to $R^{'}$, we have $\phi\left(J\right)\subset JR^{'}$ but $\phi\left(J\right) \neq JR^{'}$ and ${\rm H}^{0}_{I,J}\left(R^{'}\right) \neq {\rm H}^{0}_{IR^{'},JR^{'}}\left(R^{'}\right)$. Thus, we have that  ${\rm D}\left({\rm H}^{0}_{I,J}\left(R^{'}\right)\right) \neq {\rm D}\left({\rm H}^{0}_{IR^{'},JR^{'}}\left(R^{'}\right)\right)$ and then, by Proposition \ref{dual}, item $\left(ii\right)$, it follows that ${\rm H}_{0}^{I,J}\left({\rm D}\left(R^{'}\right)\right) \neq {\rm H}_{0}^{IR^{'},JR^{'}}\left({\rm D}\left(R^{'}\right)\right)$.}
\end{rem}

Let $M$ be a linearly compact $R$-module. Then, for all $\mathfrak{a} \in \tilde{W}\left(I,J\right)$, ${\rm Tor}_{i}^{R}\left(R/\mathfrak{a}^{t},M\right)$ is also a linearly compact $R$-module by the topology defined as in \cite[Lemma $2.6$]{uma teoria de homologia local para modulo linearmente compacto}; so we have an induced topology on the local homology module ${\rm H}_{i}^{I,J}\left(M\right)$.

\begin{thm}\label{linearmente compacto} Let $M$ be a linearly compact $R$-module. Then for all $i \geq 0$ we have that ${\rm H}_{i}^{I,J}\left(M\right)$ is a linearly compact $R$-module.
\end{thm}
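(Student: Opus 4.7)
The plan is to deduce linear compactness of ${\rm H}_{i}^{I,J}(M)$ by exploiting the double inverse limit appearing in its definition and invoking, at each stage, the stability of the class of linearly compact modules under inverse limits of continuous systems. In effect, I will verify the hypotheses of \cite[Lemma 2.3(iv)]{uma teoria de homologia local para modulo linearmente compacto} twice: once for the inner limit in $t$, and once for the outer limit in $\mathfrak{a}$.

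First I would fix an arbitrary $\mathfrak{a} \in \tilde{W}(I,J)$ and treat the inner limit. The paragraph preceding the theorem already records, via \cite[Lemma 2.6]{uma teoria de homologia local para modulo linearmente compacto}, that each ${\rm Tor}_{i}^{R}(R/\mathfrak{a}^{t},M)$ carries a natural topology making it a linearly compact $R$-module; moreover, that construction is functorial in the first argument, so the surjections $R/\mathfrak{a}^{k} \twoheadrightarrow R/\mathfrak{a}^{t}$ (for $t \leq k$) induce continuous homomorphisms of the Tor-modules. Thus $\{{\rm Tor}_{i}^{R}(R/\mathfrak{a}^{t},M)\}_{t \in \mathbb{N}}$ is an inverse system of linearly compact $R$-modules with continuous maps, and \cite[Lemma 2.3(iv)]{uma teoria de homologia local para modulo linearmente compacto} yields that
$${\rm H}_{i}^{\mathfrak{a}}(M) \;=\; \varprojlim_{t \in \mathbb{N}} {\rm Tor}_{i}^{R}(R/\mathfrak{a}^{t},M)$$
is a linearly compact $R$-module, with a natural induced linear topology.

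Next I would assemble the family $\{{\rm H}_{i}^{\mathfrak{a}}(M)\}_{\mathfrak{a} \in \tilde{W}(I,J)}$ into an inverse system indexed by $\tilde{W}(I,J)$. Given $\mathfrak{a} \leq \mathfrak{b}$ in $\tilde{W}(I,J)$ (so $\mathfrak{b} \subseteq \mathfrak{a}$), we have $\mathfrak{b}^{t} \subseteq \mathfrak{a}^{t}$ and the induced surjections $R/\mathfrak{b}^{t} \twoheadrightarrow R/\mathfrak{a}^{t}$; the same functoriality of the topology from \cite[Lemma 2.6]{uma teoria de homologia local para modulo linearmente compacto} shows that the resulting transition map ${\rm H}_{i}^{\mathfrak{b}}(M) \to {\rm H}_{i}^{\mathfrak{a}}(M)$ is a continuous homomorphism of linearly compact $R$-modules. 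Applying \cite[Lemma 2.3(iv)]{uma teoria de homologia local para modulo linearmente compacto} once more to this outer inverse system gives
$${\rm H}_{i}^{I,J}(M) \;=\; \varprojlim_{\mathfrak{a} \in \tilde{W}(I,J)} {\rm H}_{i}^{\mathfrak{a}}(M),$$
linearly compact, as required.

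The main obstacle I anticipate is checking that the continuity of the transition maps holds uniformly as $\mathfrak{a}$ varies across $\tilde{W}(I,J)$: the linearly compact topology on each Tor-module is built from a free resolution of $M$, and one must see that this construction is genuinely functorial in the first variable so that the connecting homomorphisms in both inverse systems are continuous. This is precisely what \cite[Lemma 2.6]{uma teoria de homologia local para modulo linearmente compacto} is designed to provide, since the resolution of $M$ can be chosen once and for all and does not depend on $\mathfrak{a}$; so once this functoriality is invoked, the two applications of Lemma 2.3(iv) assemble into the proof with no further computation.
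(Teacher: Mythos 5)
Your proposal is correct and follows essentially the same route as the paper: linear compactness of each ${\rm Tor}_{i}^{R}\left(R/\mathfrak{a}^{t},M\right)$ via \cite[Lemma $2.6$]{uma teoria de homologia local para modulo linearmente compacto}, then two successive applications of \cite[Lemma $2.3$, item $\left(iv\right)$]{uma teoria de homologia local para modulo linearmente compacto}, first to the inner limit over $t$ and then to the outer limit over $\mathfrak{a} \in \tilde{W}\left(I,J\right)$. The only difference is that you spell out the continuity and functoriality of the transition maps, which the paper simply asserts; this is a welcome addition, not a deviation.
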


\begin{proof} For all $\mathfrak{a} \in \tilde{W}\left(I,J\right)$, we have by \cite[Lemma $2.6$]{uma teoria de homologia local para modulo linearmente compacto} that the family of $R$-modules $\left\{{\rm Tor}_{i}^{R}\left(R/\mathfrak{a}^{t},M\right)\right\}_{t \in \mathbb{N}}$ is a family that forms an inverse system of linearly compact $R$-modules with continuous homomorphisms. By the \cite[Lemma $2.3$, item $\left(iv\right)$]{uma teoria de homologia local para modulo linearmente compacto} we have that
\begin{center}
$\varprojlim_{t \in \mathbb{N}} {\rm Tor}_{i}^{R}\left(R/\mathfrak{a}^{t},M\right) = {\rm H}_{i}^{\mathfrak{a}}\left(M\right)$,
\end{center}
is also a linearly compact $R$-module. Now, the family of $R$-modules of local homology $\left\{{\rm H}_{i}^{\mathfrak{a}}\left(M\right)\right\}_{\mathfrak{a} \in \tilde{W}\left(I,J\right)}$ forms an inverse system of linearly compact $R$-modules with continuous homomorphisms. Again by \cite[Lemma $2.3$, item $\left(iv\right)$]{uma teoria de homologia local para modulo linearmente compacto}, we have that
\begin{center}
$\varprojlim_{\mathfrak{a} \in \tilde{W}\left(I,J\right)} {\rm H}_{i}^{\mathfrak{a}}\left(M\right) = {\rm H}_{i}^{I,J}\left(M\right)$,
\end{center}
is linearly compact $R$-module.
\end{proof}

\begin{rem}{\rm Let $R$ be an Artinian ring and $M$ a finitely generated $R$-module. Then, we have that ${\rm H}_{0}^{I,J}\left(M\right)$ is a linearly compact $R$-module.}
\end{rem}

\begin{thm}\label{isomorfo a funtor} Let $R$ be a ring and $M$ be a linearly compact $R$-module. Then, we have that ${\rm H}_{i}^{I,J}\left(M\right) \cong \mathfrak{L}_{i}^{I,J}\left(M\right)$ for all $i \geq 0$.
\end{thm}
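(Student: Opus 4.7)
The plan is to reduce the statement to its single-ideal analog and then glue the pieces together using Lemma \ref{limite inverso comuta}. Concretely, I first aim to show that for each fixed $\mathfrak{a} \in \tilde{W}(I,J)$, the linear compactness of $M$ already forces the isomorphism $\mathfrak{L}_{i}^{\mathfrak{a}}(M) \cong {\rm H}_{i}^{\mathfrak{a}}(M)$ for every $i \geq 0$. Once this local-in-$\mathfrak{a}$ statement is in hand, Lemma \ref{limite inverso comuta} and the very definition of ${\rm H}_{i}^{I,J}(M)$ yield
\[
\mathfrak{L}_{i}^{I,J}(M) \;\cong\; \varprojlim_{\mathfrak{a} \in \tilde{W}(I,J)} \mathfrak{L}_{i}^{\mathfrak{a}}(M) \;\cong\; \varprojlim_{\mathfrak{a} \in \tilde{W}(I,J)} {\rm H}_{i}^{\mathfrak{a}}(M) \;=\; {\rm H}_{i}^{I,J}(M),
\]
which is exactly the desired conclusion.

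To establish the single-ideal isomorphism, I would fix $\mathfrak{a} \in \tilde{W}(I,J)$ and pick a free resolution ${\bf F}_{\bullet} \to M \to 0$. By definition of the left derived functor,
\[
\mathfrak{L}_{i}^{\mathfrak{a}}(M) \;=\; H_{i}\bigl(\Lambda_{\mathfrak{a}}({\bf F}_{\bullet})\bigr) \;=\; H_{i}\Bigl(\varprojlim_{t \in \mathbb{N}} {\bf F}_{\bullet} \otimes_{R} R/\mathfrak{a}^{t}\Bigr),
\]
exactly as in the proof of Lemma \ref{limite inverso comuta}. Because $M$ is linearly compact, the same reasoning used there (citing \cite[Lemma~$2.3$(iii)]{uma teoria de homologia local para modulo linearmente compacto}) ensures that for each index $i$ the family $\{F_{i} \otimes_{R} R/\mathfrak{a}^{t}\}_{t \in \mathbb{N}}$ is an inverse system of linearly compact $R$-modules with continuous transition maps. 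This is the key structural input, because \cite[Lemma~$2.4$]{uma teoria de homologia local para modulo linearmente compacto} tells us that $\varprojlim_{t}$ is an exact functor on such systems, hence it commutes with the homology functor $H_{i}$. Thus
\[
H_{i}\Bigl(\varprojlim_{t} {\bf F}_{\bullet} \otimes_{R} R/\mathfrak{a}^{t}\Bigr) \;\cong\; \varprojlim_{t} H_{i}\bigl({\bf F}_{\bullet} \otimes_{R} R/\mathfrak{a}^{t}\bigr) \;=\; \varprojlim_{t} {\rm Tor}_{i}^{R}(R/\mathfrak{a}^{t}, M) \;=\; {\rm H}_{i}^{\mathfrak{a}}(M),
\]
giving the required $\mathfrak{L}_{i}^{\mathfrak{a}}(M) \cong {\rm H}_{i}^{\mathfrak{a}}(M)$.

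The step that does the real work is the commutation of $\varprojlim_{t}$ with $H_{i}$, and the only substantive obstacle to it is verifying that linear compactness really propagates from $M$ to every $F_{i} \otimes_{R} R/\mathfrak{a}^{t}$ despite the free modules $F_{i}$ being possibly of infinite rank. This is the content the citation to \cite[Lemma~$2.3$(iii)]{uma teoria de homologia local para modulo linearmente compacto} is carrying, and it is precisely the same ingredient already invoked inside the proof of Lemma \ref{limite inverso comuta}, so the argument is internally consistent. After this delicate point, the remainder is routine bookkeeping: exactness of $\varprojlim_{t}$ on linearly compact systems pushes it past $H_{i}$, and Lemma \ref{limite inverso comuta} pushes the outer $\varprojlim_{\mathfrak{a}}$ past the derived functor, completing the proof for all $i \geq 0$ simultaneously (including $i = 0$, where one recovers $\mathfrak{L}_{0}^{I,J}(M) \cong \Lambda_{I,J}(M)$ without needing the stationarity hypothesis of Theorem \ref{pertence a categoria nova}).
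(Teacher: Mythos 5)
Your overall scaffolding matches the paper's: both arguments reduce the theorem to the single-ideal isomorphism ${\rm H}_i^{\mathfrak{a}}(M) \cong \mathfrak{L}_i^{\mathfrak{a}}(M)$ for each $\mathfrak{a} \in \tilde{W}(I,J)$, and then pass to the outer limit via Lemma \ref{limite inverso comuta}. The difference is that the paper obtains the single-ideal step simply by citing \cite[Theorem 3.6]{left derived functors}, whereas you attempt to prove it from scratch, and your proof of that step has a genuine gap. The claim that linear compactness of $M$ forces $\{F_j \otimes_R R/\mathfrak{a}^t\}_{t \in \mathbb{N}}$ to be an inverse system of \emph{linearly compact} modules is false: the terms of a free resolution of $M$ inherit no structure whatsoever from the topology of $M$. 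For a general linearly compact $M$ the free modules $F_j$ have infinite rank, so $F_j \otimes_R R/\mathfrak{a}^t \cong F_j/\mathfrak{a}^t F_j$ is an infinite direct sum of copies of $R/\mathfrak{a}^t$; a discrete module is linearly compact only if it is Artinian, so such a direct sum is not linearly compact (and even a single copy of $R/\mathfrak{a}^t$ need not be, e.g.\ $R/\mathfrak{a} = k[y]$ for $R = k[x,y]$, $\mathfrak{a} = (x)$). Consequently the hypothesis of \cite[Lemma 2.4]{uma teoria de homologia local para modulo linearmente compacto} is not satisfied by your complex, and you cannot use exactness of $\varprojlim_t$ on linearly compact systems to push it past ${\rm H}_i$. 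No choice of free resolution repairs this, so the "delicate point" you flagged is not carried by the citation; it is simply wrong.

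The correct mechanism behind the result you are re-proving is different: the chain-level systems $\{F_j/\mathfrak{a}^t F_j\}_t$ have surjective transition maps, hence satisfy Mittag-Leffler, which yields the exact sequence $0 \to \varprojlim^1_t {\rm Tor}_{i+1}^R(R/\mathfrak{a}^t,M) \to {\rm H}_i(\varprojlim_t (\mathbf{F}_\bullet \otimes_R R/\mathfrak{a}^t)) \to \varprojlim_t {\rm Tor}_i^R(R/\mathfrak{a}^t,M) \to 0$; one then kills the $\varprojlim^1$ term because the \emph{homology} modules ${\rm Tor}_{i+1}^R(R/\mathfrak{a}^t,M)$ — not the chain modules — are linearly compact when $M$ is (\cite[Lemma 2.6]{uma teoria de homologia local para modulo linearmente compacto}), and $\varprojlim^1$ vanishes on inverse systems of linearly compact modules with continuous maps (\cite[Theorem 7.1]{les foncteurs derives de inverse limit}); both facts are quoted elsewhere in the paper. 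In short, linear compactness of $M$ enters through the Tor modules, not through the resolution. You are right that the same unjustified claim about $F_j \otimes_R R/\mathfrak{a}^t$ appears inside the paper's own proof of Lemma \ref{limite inverso comuta}, so your argument is "internally consistent" with the paper in that sense — but that consistency inherits a latent flaw rather than repairing it, and the paper's proof of this particular theorem rests the single-ideal step on the external citation, not on that claim.
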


\begin{proof} By \cite[Theorem $3.6$]{left derived functors}, we have the isomorphism ${\rm H}_{i}^{\mathfrak{a}}\left(M\right) \cong \mathfrak{L}_{i}^{\mathfrak{a}}\left(M\right)$ for all $i \geq 0$ and for all $\mathfrak{a} \in \tilde{W}\left(I,J\right)$. Thus,
\begin{center}
$\varprojlim_{\mathfrak{a} \in \tilde{W}\left(I,J\right)} {\rm H}_{i}^{\mathfrak{a}}\left(M\right) \cong \varprojlim_{\mathfrak{a} \in \tilde{W}\left(I,J\right)} \mathfrak{L}_{i}^{\mathfrak{a}}\left(M\right) \cong \mathfrak{L}_{i}^{I,J}\left(M\right)$,
\end{center}
by Lemma \ref{limite inverso comuta}. Therefore, ${\rm H}_{i}^{I,J}\left(M\right) \cong \mathfrak{L}_{i}^{I,J}\left(M\right)$ for all $i \geq 0$, as required.
\end{proof}

\begin{thm}\label{sequencia exata de linearmente compacto} Let $R$ be a Noetherian ring and
$0\rightarrow M^{'}\rightarrow M\rightarrow M^{''}\rightarrow 0$
a short exact sequence of linearly compact $R$-modules. Then we have a long exact sequence of the local homology modules
\begin{flushleft}
$\ldots \rightarrow {\rm H}_{i}^{I,J}\left(M^{'}\right)\rightarrow {\rm H}_{i}^{I,J}\left(M\right)\rightarrow {\rm H}_{i}^{I,J}\left(M^{''}\right)\rightarrow \ldots \rightarrow {\rm H}_{0}^{I,J}\left(M^{'}\right)\rightarrow {\rm H}_{0}^{I,J}\left(M\right)\rightarrow {\rm H}_{0}^{I,J}\left(M^{''}\right)\rightarrow 0$.
\end{flushleft}
Moreover, each module this sequence is linearly compact.
\end{thm}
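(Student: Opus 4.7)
My plan is to produce the long exact sequence first at the level of the left derived functors $\mathfrak{L}_i^{I,J}$ and then transport it to the local homology modules ${\rm H}_i^{I,J}$ via the identification provided by Theorem \ref{isomorfo a funtor}. Linear compactness of every term will then follow immediately from Theorem \ref{linearmente compacto}.

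For the first step, apply the horseshoe lemma to $0 \to M' \to M \to M'' \to 0$ to obtain projective resolutions $P_\bullet' \to M'$ and $P_\bullet'' \to M''$ together with a degree-wise split short exact sequence of complexes $0 \to P_\bullet' \to P_\bullet \to P_\bullet'' \to 0$ where $P_n = P_n' \oplus P_n''$. Since $\Lambda_{I,J}$ is additive, it preserves this degree-wise split exactness, giving a short exact sequence of complexes $0 \to \Lambda_{I,J}(P_\bullet') \to \Lambda_{I,J}(P_\bullet) \to \Lambda_{I,J}(P_\bullet'') \to 0$. Taking homology yields the standard long exact sequence
$$\ldots \to \mathfrak{L}_i^{I,J}(M') \to \mathfrak{L}_i^{I,J}(M) \to \mathfrak{L}_i^{I,J}(M'') \to \mathfrak{L}_{i-1}^{I,J}(M') \to \ldots \to \mathfrak{L}_0^{I,J}(M'') \to 0,$$
where the termination on the right uses the right exactness of $\mathfrak{L}_0^{I,J}$ noted in the remark following Definition \ref{funtor de dois ideais}.

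For the second step, since $M'$, $M$, $M''$ are all linearly compact, Theorem \ref{isomorfo a funtor} supplies natural isomorphisms $\mathfrak{L}_i^{I,J}(-) \cong {\rm H}_i^{I,J}(-)$ in every degree $i \geq 0$ on these modules. Substituting these isomorphisms converts the long exact sequence of the $\mathfrak{L}_i^{I,J}$ into the desired long exact sequence of the ${\rm H}_i^{I,J}$. Then Theorem \ref{linearmente compacto} applied to each of $M'$, $M$, $M''$ shows that each of ${\rm H}_i^{I,J}(M')$, ${\rm H}_i^{I,J}(M)$, ${\rm H}_i^{I,J}(M'')$ is a linearly compact $R$-module for every $i \geq 0$, so every module appearing in the sequence is linearly compact.

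The point I expect to require most care is the compatibility of the natural isomorphism $\mathfrak{L}_i^{I,J} \cong {\rm H}_i^{I,J}$ with the maps induced by $M' \to M \to M''$ and with the connecting homomorphisms. This is not a problem in practice: the underlying isomorphism \cite[Theorem $3.6$]{left derived functors} identifies $\mathfrak{L}_i^{\mathfrak{a}}$ with ${\rm H}_i^{\mathfrak{a}}$ naturally for each $\mathfrak{a} \in \tilde{W}(I,J)$, and the identification at the level of $(I,J)$ is obtained by taking $\varprojlim_{\mathfrak{a} \in \tilde{W}(I,J)}$, as in Lemma \ref{limite inverso comuta}, which preserves naturality. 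As a safety net, should any subtlety arise, I could instead apply the classical single-ideal long exact sequence for ${\rm H}_i^{\mathfrak{a}}$ to $0 \to M' \to M \to M'' \to 0$ for each $\mathfrak{a}$, and then take $\varprojlim_{\mathfrak{a} \in \tilde{W}(I,J)}$, using the exactness of this inverse limit on inverse systems of linearly compact $R$-modules with continuous morphisms (\cite[Lemma $2.4$]{uma teoria de homologia local para modulo linearmente compacto}) to preserve the long exact sequence.
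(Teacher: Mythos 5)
Your proposal is correct, but it takes a genuinely different route from the paper. The paper works entirely at the level of the definition ${\rm H}_{i}^{I,J}\left(M\right) = \varprojlim_{\mathfrak{a}}\varprojlim_{t}{\rm Tor}_{i}^{R}\left(R/\mathfrak{a}^{t},M\right)$: it starts from the classical Tor long exact sequence, observes that each ${\rm Tor}_{i}^{R}\left(R/\mathfrak{a}^{t},-\right)$ of a linearly compact module is linearly compact (since $R/\mathfrak{a}^{t}$ is finitely generated), applies the exactness of $\varprojlim_{t}$ on linearly compact modules to get the long exact sequence of the ${\rm H}_{i}^{\mathfrak{a}}$, checks these are again linearly compact, and then applies $\varprojlim_{\mathfrak{a}}$ the same way --- in other words, the paper's proof is essentially your ``safety net'' fallback, pushed one level further down to Tor. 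Your primary route instead builds the long exact sequence abstractly for the left derived functors $\mathfrak{L}_{i}^{I,J}$ via the horseshoe lemma (valid for any additive functor; note the termination at $\mathfrak{L}_{0}^{I,J}\left(M''\right)\rightarrow 0$ already follows from the homology sequence, so the appeal to right exactness of $\mathfrak{L}_{0}^{I,J}$ is unnecessary) and then transports it through Theorem \ref{isomorfo a funtor}. What your route buys is that exactness comes for free from standard homological algebra, with no need to verify linear compactness of the intermediate Tor modules; what it costs is precisely the point you flag: Theorem \ref{isomorfo a funtor} as stated in the paper is only a pointwise isomorphism, and to obtain a sequence whose non-connecting maps are the functorially induced ones (which the paper needs in later applications, e.g.\ where multiplication by $x^{t}$ is tracked through the sequence) you must know that the isomorphisms ${\rm H}_{i}^{\mathfrak{a}} \cong \mathfrak{L}_{i}^{\mathfrak{a}}$ of \cite[Theorem $3.6$]{left derived functors} are natural and that naturality survives $\varprojlim_{\mathfrak{a}}$; your justification of this is plausible but rests on a property the paper never makes explicit. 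Interestingly, the paper runs the implication in the opposite direction: its remark after the theorem derives the long exact sequence for $\mathfrak{L}_{i}^{I,J}$ as a corollary of the one for ${\rm H}_{i}^{I,J}$ via Theorem \ref{isomorfo a funtor}.
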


\begin{proof} The short exact sequence of $R$-modules
\begin{center}
$0\rightarrow M^{'}\rightarrow M\rightarrow M^{''}\rightarrow 0$
\end{center}
gives rise to a long exact sequence, by \cite[Theorem $8.3$]{rotman pequeno}
\begin{center}
$\ldots \rightarrow {\rm Tor}_{i}^{R}\left(R/\mathfrak{a}^{t},M^{'}\right)\rightarrow {\rm Tor}_{i}^{R}\left(R/\mathfrak{a}^{t},M\right)\rightarrow {\rm Tor}_{i}^{R}\left(R/\mathfrak{a}^{t},M^{''}\right)\rightarrow \ldots \rightarrow {\rm Tor}_{0}^{R}\left(R/\mathfrak{a}^{t},M^{'}\right)\rightarrow {\rm Tor}_{0}^{R}\left(R/\mathfrak{a}^{t},M\right)\rightarrow {\rm Tor}_{0}^{R}\left(R/\mathfrak{a}^{t},M^{''}\right)\rightarrow 0$
\end{center}
for any $\mathfrak{a} \in \tilde{W}\left(I,J\right)$. As $R/\mathfrak{a}^{t}$ is finitely generated $R$-module and $M^{'}$, $M$, $M^{''}$ are linearly compact $R$-modules, we have by \cite[Lemma $2.8$]{co-associados finitos} that for all $i \geq 0$, ${\rm Tor}_{i}^{R}\left(R/\mathfrak{a}^{t},M^{'}\right)$, ${\rm Tor}_{i}^{R}\left(R/\mathfrak{a}^{t},M\right)$ and ${\rm Tor}_{i}^{R}\left(R/\mathfrak{a}^{t},M^{''}\right)$, are linearly compact $R$-modules. Now, by \cite[Lemma $2.4$]{uma teoria de homologia local para modulo linearmente compacto}, we have that $\varprojlim_{t \in \mathbb{N}}$ is exact functor on linearly compact $R$-modules. Thus, applying $\varprojlim_{t \in \mathbb{N}}$ the previous long exact sequence, we have that
\begin{flushleft}
$\ldots \rightarrow {\rm H}_{i}^{\mathfrak{a}}\left(M^{'}\right)\rightarrow {\rm H}_{i}^{\mathfrak{a}}\left(M\right)\rightarrow {\rm H}_{i}^{\mathfrak{a}}\left(M^{''}\right)\rightarrow \ldots \rightarrow {\rm H}_{0}^{\mathfrak{a}}\left(M^{'}\right)\rightarrow {\rm H}_{0}^{\mathfrak{a}}\left(M\right)\rightarrow {\rm H}_{0}^{\mathfrak{a}}\left(M^{''}\right)\rightarrow 0$
\end{flushleft}
is exact sequence. As $M^{'}$, $M$ and $M^{''}$ are linearly compact $R$-modules, by \cite[Proposition $3.3$]{uma teoria de homologia local para modulo linearmente compacto}, the modules in previous long exact sequence are linearly compact $R$-modules. Then, applying $\varprojlim_{\mathfrak{a} \in \tilde{W}\left(I,J\right)}$ the previous sequence, we obtain the long exact sequence:
\begin{flushleft}
$\ldots \rightarrow {\rm H}_{i}^{I,J}\left(M^{'}\right)\rightarrow {\rm H}_{i}^{I,J}\left(M\right)\rightarrow {\rm H}_{i}^{I,J}\left(M^{''}\right)\rightarrow \ldots \rightarrow {\rm H}_{0}^{I,J}\left(M^{'}\right)\rightarrow {\rm H}_{0}^{I,J}\left(M\right)\rightarrow {\rm H}_{0}^{I,J}\left(M^{''}\right)\rightarrow 0$.
\end{flushleft}
By the Theorem \ref{linearmente compacto}, we have that each module this sequence is linearly compact $R$-module, as required.
\end{proof}

\begin{rem} {\rm According to Theorem \ref{isomorfo a funtor} we have, by the Theorem \ref{sequencia exata de linearmente compacto}, that we obtain a long exact sequence for $\mathfrak{L}_{i}^{I,J}\left(M\right)$ for all $i \geq 0$ the $i$th left derived module of $\Lambda_{I,J}\left(M\right)$.}
\end{rem}

\begin{cor}\label{sequencia exata de artinianos} Let $0\rightarrow M^{'}\rightarrow M\rightarrow M^{''}\rightarrow 0$ be a short exact sequence of Artinian $R$-modules, where $R$ is Noetherian ring. Then, for all $\mathfrak{a} \in \tilde{W}\left(I,J\right)$, we have a long exact sequence of modules of local homology with respect to the pair of ideals $\left(I,J\right)$
\begin{center}
$\ldots \rightarrow {\rm H}_{i}^{I,J}\left(M^{'}\right)\rightarrow {\rm H}_{i}^{I,J}\left(M\right)\rightarrow {\rm H}_{i}^{I,J}\left(M^{''}\right)\rightarrow \ldots \rightarrow {\rm H}_{0}^{I,J}\left(M^{'}\right)\rightarrow {\rm H}_{0}^{I,J}\left(M\right)\rightarrow {\rm H}_{0}^{I,J}\left(M^{''}\right)\rightarrow 0$.
\end{center}
\end{cor}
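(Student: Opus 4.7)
The plan is to deduce this corollary as an immediate consequence of Theorem \ref{sequencia exata de linearmente compacto}, using the fact (recalled in Section 2, citing \cite[Theorem $2.1$]{modulos linearmente compactos sobre aneis noetherianos}) that every Artinian $R$-module is linearly compact in its discrete topology. Thus the three terms $M'$, $M$, $M''$ of the given short exact sequence belong to the class of linearly compact $R$-modules for which Theorem \ref{sequencia exata de linearmente compacto} applies.

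The steps I would carry out, in order, are the following. First, observe that because each $M'$, $M$, $M''$ carries the discrete topology, every $R$-homomorphism between them is automatically continuous, so the sequence $0 \to M' \to M \to M'' \to 0$ is genuinely a short exact sequence in the category of (Hausdorff) linearly topologized $R$-modules. Second, invoke Theorem \ref{sequencia exata de linearmente compacto}, which, under the hypothesis that $R$ is Noetherian and that the three modules are linearly compact, produces the desired long exact sequence
\[
\cdots \to {\rm H}_{i}^{I,J}(M') \to {\rm H}_{i}^{I,J}(M) \to {\rm H}_{i}^{I,J}(M'') \to \cdots \to {\rm H}_{0}^{I,J}(M'') \to 0.
\]
Third, remark (if one wants to emphasize it) that by Theorem \ref{linearmente compacto} each term in this sequence is automatically a linearly compact $R$-module, although for Artinian inputs one could in principle hope for Artinianness of the outputs as well; this finer point is not required by the statement of the corollary.

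There is essentially no obstacle here: the result is a direct specialization of Theorem \ref{sequencia exata de linearmente compacto} along the inclusion (Artinian modules) $\subset$ (semidiscrete linearly compact modules). The only conceptual point that deserves an explicit mention is why the passage from ``Artinian short exact sequence'' to ``linearly compact short exact sequence'' is legitimate, which is handled once and for all by Macdonald's observation that the discrete topology turns an Artinian module into a linearly compact one with continuous structure maps. Once this is noted, the corollary follows in a single line.
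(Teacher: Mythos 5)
Your proposal is correct and follows exactly the paper's own argument: the paper likewise deduces the corollary in one line from Theorem \ref{sequencia exata de linearmente compacto} by citing \cite[Theorem $2.1$]{modulos linearmente compactos sobre aneis noetherianos} to view the Artinian modules as linearly compact modules with the discrete topology. Your additional remark on the automatic continuity of homomorphisms in the discrete topology is a harmless (and reasonable) elaboration of the same step.
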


\begin{proof}
Since Artinian modules are linearly compact modules with the discrete topology \cite[Theorem $2.1$]{modulos linearmente compactos sobre aneis noetherianos}, the result it follows from Theorem \ref{sequencia exata de linearmente compacto}.
\end{proof}

\begin{thm}\label{teorema de generalization of local homology functors} Let $\left(R,\mathfrak{m}\right)$ be a Noetherian local ring, with a unique maximal ideal $\mathfrak{m}$ and $M$ be a Artinian $R$-module. Then, for a positive integer $s$, the following statements are equivalent:
\begin{itemize}
\item[$\left(i\right)$] ${\rm H}_{i}^{I,J}\left(M\right)$ is Artinian $R$-module, for all $i < s$, $i \geq 0$;
\item[$\left(ii\right)$] $\mathfrak{a}\subseteq {\rm Rad}\left({\rm Ann}_{R}\left({\rm H}_{i}^{I,J}\left(M\right)\right)\right)$, for all $i < s$, $i \geq 0$ and for all $\mathfrak{a} \in \tilde{W}\left(I,J\right)$.
\end{itemize}
\end{thm}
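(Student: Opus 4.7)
The plan is to prove the two implications separately, with the bulk of the work lying in $(ii)\Rightarrow(i)$.

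For $(i)\Rightarrow(ii)$, fix $i<s$, $\mathfrak{a}\in\tilde{W}(I,J)$, and set $N:={\rm H}_i^{I,J}(M)$. Proposition \ref{dual}(i) gives $\bigcap_{t>0}\mathfrak{a}^tN=0$, while hypothesis $(i)$ says $N$ is Artinian. Hence the descending chain of submodules $\{\mathfrak{a}^tN\}_{t\ge 1}$ stabilises at some $t_{0}$, and its stable value must coincide with the intersection $\bigcap_{t}\mathfrak{a}^tN=0$. Therefore $\mathfrak{a}^{t_{0}}\subseteq\Ann_{R}(N)$, which is precisely $\mathfrak{a}\subseteq{\rm Rad}(\Ann_{R}(N))$.

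For $(ii)\Rightarrow(i)$, the decisive preliminary remark is that $\mathfrak{m}$ itself belongs to $\tilde{W}(I,J)$: since $I\subseteq\mathfrak{m}$ we have $I^{1}\subseteq\mathfrak{m}\subseteq\mathfrak{m}+J$. Specialising $(ii)$ to $\mathfrak{a}=\mathfrak{m}$ and using that $\mathfrak{m}$ is finitely generated (as $R$ is Noetherian), for each $i<s$ there is a positive integer $n_{i}$ with $\mathfrak{m}^{n_{i}}\cdot{\rm H}_i^{I,J}(M)=0$. By Theorem \ref{linearmente compacto}, ${\rm H}_i^{I,J}(M)$ is a linearly compact $R$-module; being annihilated by $\mathfrak{m}^{n_{i}}$, it is naturally a linearly compact module over the Artinian local ring $R/\mathfrak{m}^{n_{i}}$, and I would then invoke the standard fact that any linearly compact module over an Artinian local ring is Artinian (realising it as the Matlis dual of a finitely generated module over a complete ring of finite length).

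The main obstacle is this last step: upgrading from linear compactness plus $\mathfrak{m}$-power annihilation to genuine Artinianness, for which one needs the structural theory of linearly compact modules over Artinian rings. A conceptually cleaner alternative route is via Matlis duality: Proposition \ref{dual}(ii), together with $D(D(M))\cong M$ for Artinian $M$, yields ${\rm H}_i^{I,J}(M)\cong {\rm D}({\rm H}^{i}_{I,J}({\rm D}(M)))$, and since ${\rm D}(M)$ is finitely generated over $\hat{R}$ while ${\rm H}^{i}_{I,J}({\rm D}(M))$ is annihilated by $\mathfrak{m}^{n_{i}}$, one can appeal to a cofiniteness/finite generation statement for local cohomology with respect to a pair of ideals to conclude that ${\rm H}^{i}_{I,J}({\rm D}(M))$ is finitely generated over $\hat{R}$, whose Matlis dual is then Artinian by standard Matlis duality.
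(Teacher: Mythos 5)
Your implication $\left(i\right)\Rightarrow\left(ii\right)$ is correct and is essentially the paper's own argument (stabilisation of the descending chain $\left\{\mathfrak{a}^{t}N\right\}$ plus $\mathfrak{a}$-separatedness from Proposition \ref{dual}, item $\left(i\right)$). The genuine gap is in the main step of $\left(ii\right)\Rightarrow\left(i\right)$: the ``standard fact'' you invoke --- that a linearly compact module over an Artinian local ring is Artinian --- is false. Take the residue field $k=R/\mathfrak{m}$ and the countable product $N=k^{\mathbb{N}}$ with the product of discrete topologies: $N$ is Hausdorff, linearly topologized and linearly compact (a product of linearly compact modules is linearly compact), and it is killed by $\mathfrak{m}$, yet it is an infinite-dimensional $k$-vector space and hence not Artinian. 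Your parenthetical justification is exactly where this breaks: over a complete local ring, Macdonald--Matlis duality pairs linearly compact modules with \emph{discrete} modules, not with \emph{finitely generated} ones; indeed $k^{\mathbb{N}}\cong{\rm D}\left(k^{\left(\mathbb{N}\right)}\right)$, and the direct sum $k^{\left(\mathbb{N}\right)}$ is not finitely generated. So linear compactness plus annihilation by $\mathfrak{m}^{n_{i}}$ does not force Artinianness; note also that in your argument the Artinianness of $M$ itself is used only to get linear compactness of the homology modules, which is too weak to carry the finiteness you need.

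Your fallback route via Matlis duality has the same gap in different clothing: once ${\rm H}^{i}_{I,J}\left({\rm D}\left(M\right)\right)$ is known to be killed by $\mathfrak{m}^{n_{i}}$, it is a module over the Artinian ring $R/\mathfrak{m}^{n_{i}}$, so being finitely generated is \emph{equivalent} to being Artinian; the ``cofiniteness/finite generation statement'' you would appeal to is therefore precisely the assertion to be proved, and no such result for the pair $\left(I,J\right)$ is proved or cited in the paper. What the paper actually does is an induction on $s$ that exploits the Artinianness of $M$ throughout. For $s=1$: since $\left\{\mathfrak{a}^{t}M\right\}$ stabilises, ${\rm H}_{0}^{\mathfrak{a}}\left(M\right)=M/\mathfrak{a}^{m}M$ is Artinian, and one passes to the inverse limit over $\tilde{W}\left(I,J\right)$. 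For $s>1$: one reduces to the case $\mathfrak{a}M=M$, picks $x\in\mathfrak{a}$ with $xM=M$, uses $\left(ii\right)$ to get $x^{t}{\rm H}_{i}^{I,J}\left(M\right)=0$, and then the long exact sequence of Corollary \ref{sequencia exata de artinianos} associated to $0\rightarrow\left(0:_{M}x^{t}\right)\rightarrow M\stackrel{x^{t}}{\rightarrow}M\rightarrow 0$ degenerates into short exact sequences $0\rightarrow{\rm H}_{i+1}^{I,J}\left(M\right)\rightarrow{\rm H}_{i}^{I,J}\left(\left(0:_{M}x^{t}\right)\right)\rightarrow{\rm H}_{i}^{I,J}\left(M\right)\rightarrow 0$; the inductive hypothesis applied to the Artinian module $\left(0:_{M}x^{t}\right)$ makes the middle term Artinian, hence so is ${\rm H}_{i+1}^{I,J}\left(M\right)$. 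To salvage your approach you would have to bring the Artinianness of $M$ back into the argument through some such d\'evissage, rather than relying on linear compactness alone.
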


\begin{proof}
$\left(i\right) \Rightarrow \left(ii\right)$: Suppose that $i < s$, $i \geq 1$. Since ${\rm H}_{i}^{I,J}\left(M\right)$ is Artinian $R$-module for all $i < s$, there exists a positive integer $n$ such that $\mathfrak{a}^{t}{\rm H}_{i}^{I,J}\left(M\right) = \mathfrak{a}^{n}{\rm H}_{i}^{I,J}\left(M\right)$, for all $t \geq n$. Therefore,
\begin{center}
$\mathfrak{a}^{n}{\rm H}_{i}^{I,J}\left(M\right) = \bigcap_{t > 0} \mathfrak{a}^{t}{\rm H}_{i}^{I,J}\left(M\right) = 0$, since ${\rm H}_{i}^{I,J}\left(M\right)$ is $\mathfrak{a}$-separated.
\end{center}
Thus, $\mathfrak{a}\subseteq {\rm Rad}\left({\rm Ann}_{R}\left({\rm H}_{i}^{I,J}\left(M\right)\right)\right)$, for all $i < s$, $i \geq 1$ and for all $\mathfrak{a} \in \tilde{W}\left(I,J\right)$.\newline
$\left(ii\right) \Rightarrow \left(i\right)$: We use induction on $s$. When $s = 1$, we must show that ${\rm H}_{0}^{I,J}\left(M\right)$ is Artinian $R$-module. Since $M$ is Artinian there is a positive integer $m$ such that $\mathfrak{a}^{t}M = \mathfrak{a}^{m}M$, for all $t \geq m$. Then ${\rm H}_{0}^{\mathfrak{a}}\left(M\right) = R/\mathfrak{a}^{m} \otimes_{R} M$. Since $R/\mathfrak{a}^{m} \otimes_{R} M$ is Artinian \cite[Proposition $2.13$]{Ooishi} (since $M$ is Artinian $R$-module), we have by the proof of \cite[Theorem $3.4$]{a generalization of local homology functors} that ${\rm H}_{0}^{\mathfrak{a}}\left(M\right)$ is Artinian $R$-module. We have then that $\left\{{\rm H}_{0}^{\mathfrak{a}}\left(M\right)\right\}_{\mathfrak{a} \in \tilde{W}\left(I,J\right)}$ is an inverse system of Artinian $R$-modules; therefore, ${\rm H}_{0}^{I,J}\left(M\right)$ is Artinian $R$-module, by \cite[Properties $2.4$, $3.6$ and $3.3$]{macdonald} and \cite[Properties $27.2$ and $27.3$]{lefschetz}. Suppose that $s > 1$. By \cite[Theorem $3.3$]{a generalization of local homology functors} we can replace $M$ by $\bigcap_{t > 0} \mathfrak{a}^{t}M$. Since $M$ is Artinian the last module is just equal to $\mathfrak{a}^{n}M$, for sufficiently large $n$. Therefore, we may assume that $\mathfrak{a}M = M$. Since $M$ is Artinian, there is an element $x \in \mathfrak{a}$ such that $xM = M$ (\cite[Proposition $1.1$, item $\left(i\right)$]{torsion theory co-cohen macaulay}). Thus, by the hypothesis, there exists positive integer $t$ such that $x^{t}{\rm H}_{i}^{I,J}\left(M\right) = 0$ for all $i < s$, $i \geq 1$. Then the short exact sequence
\begin{center}
$0\rightarrow \left(0 :_{M} x^{t}\right)\rightarrow M\stackrel{x^{t}}{\rightarrow} M\rightarrow 0$
\end{center}
provides us a long exact sequence, by Corollary \ref{sequencia exata de artinianos}:
\begin{center}
$0\rightarrow {\rm H}_{i + 1}^{I,J}\left(M\right)\rightarrow {\rm H}_{i}^{I,J}\left(\left(0 :_{M} x^{t}\right)\right)\rightarrow {\rm H}_{i}^{I,J}\left(M\right)\rightarrow 0$
\end{center}
for all $i < s - 1$. It follows that $\mathfrak{a}\subseteq {\rm Rad}\left({\rm Ann}_{R}\left({\rm H}_{i}^{I,J}\left(0 :_{M} x^{t}\right)\right)\right)$, and by inductive hypothesis we have that ${\rm H}_{i}^{I,J}\left(\left(0 :_{M} x^{t}\right)\right)$ is Artinian, for all $i < s - 1$. Thus, ${\rm H}_{i}^{I,J}\left(M\right)$ is Artinian $R$-module for all $i < s$. This finishes the inductive step.
\end{proof}

\begin{rem} {\rm We note that in the implication $\left(i\right)\Rightarrow \left(ii\right)$ in the proof of Theorem \ref{teorema de generalization of local homology functors} we need not assume that $M$ is Artinian $R$-module.}
\end{rem}

\section{Vanishing Results}

\begin{defn} {\rm (\cite[Definition $1.1$]{cohomologia local de um par de ideais}) For an $R$-module $M$ we denote by $\Gamma_{I,J}\left(M\right)$ the set of elements $x$ of $M$ such that $I^{n}x\subseteq Jx$ for some integer $n \geq 1$, i.e.,
\begin{center}
$\Gamma_{I,J}\left(M\right) = \left\{x \in M \;|\; I^{n}x\subseteq Jx \ \textrm{for some integer} \ n \geq 1\right\}$.
\end{center}
We say that $M$ is $\left(I,J\right)$-torsion (respectively $\left(I,J\right)$-torsion-free) precisely when $\Gamma_{I,J}\left(M\right) = M$ (respectively $\Gamma_{I,J}\left(M\right) = 0$). Note that when we have the ideal $J = 0$, we say that $M$ is $I$-torsion when $\Gamma_{I}\left(M\right) = 0$, and we say that $M$ is $I$-torsion-free when $\Gamma_{I}\left(M\right) = 0$.}
\end{defn}

We now recall the concept of noetherian dimension of an $R$-module $M$, denoted by ${\rm Ndim}\left(M\right)$. This notion was first introduced by R.N. Roberts \cite[Definitions]{roberts} by the name Krull dimension. Later, Kirby \cite[Definitions]{kirby} changed this terminology of Roberts and refereed to noetherian dimension to avoid confusion with well-known Krull dimension of finitely generated modules. Let $M$ be an $R$-module. When $M = 0$ we put ${\rm Ndim}\left(M\right) = -1$. Then by induction, for any ordinal $\alpha$, we put ${\rm Ndim}\left(M\right) = \alpha$ when
\begin{itemize}
\item[$\left(i\right)$] ${\rm Ndim}\left(M\right) < \alpha$ is false; and
\item[$\left(ii\right)$] for every ascending chain $M_{0}\subseteq M_{1}\subseteq \ldots$ of submodules of $M$, there exists a positive integer $m_{0}$ such that ${\rm Ndim}\left(M_{m + 1}/M_{m}\right) < \alpha$, for all $m \geq m_{0}$.
\end{itemize}
Thus $M$ is non-zero and Noetherian if and only if ${\rm Ndim}\left(M\right) = 0$.

Recall that a module $M$ is simple if it is non-zero and does not admit a proper non-zero submodule. Simplicity of a module $M$ is equivalent to say that $Rm = M$, for every $m$ non-zero in $M$. The ${\rm Soc}\left(M\right)$ the socle of $M$ is the sum of all simple submodules of $M$, i.e., is the submodule
\begin{center}
${\rm Soc}\left(M\right) = \sum \left\{N \;|\; N \ \textrm{is simple submodule of} \ M\right\}$.
\end{center}
We recall also that a module $M$ is said to be semisimple if it satisfies any of the equivalent conditions:
\begin{itemize}
\item[$\left(i\right)$] it is a sum of simple submodules.
\item[$\left(ii\right)$] it is a direct sum of simple submodules.
\end{itemize}
So the socle of $M$ is the largest submodule of $M$ generated by simple modules, or equivalently, it is the largest semisimple submodule of $M$.

{\rm The following result is a generalization, to the case of a pair of ideals, of \cite[Theorem $3.16$]{left derived functors}.}

\begin{thm} Let $\left(R,\mathfrak{m}\right)$ be a local ring with the $\mathfrak{m}$-adic topology such that $R$ is Noetherian. Let $M$ a linearly compact $R$-module. Then ${\rm H}_{i}^{\mathfrak{m},J}\left(M\right) = 0$ for all $i > \dim\left(R\right)$.
\end{thm}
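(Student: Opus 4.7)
The plan is to reduce the statement to the ordinary local homology vanishing via the defining inverse limit. By construction,
\begin{center}
${\rm H}_i^{\mathfrak{m},J}(M) = \varprojlim_{\mathfrak{a} \in \tilde{W}(\mathfrak{m},J)} {\rm H}_i^{\mathfrak{a}}(M)$,
\end{center}
so it suffices to verify that each term in this inverse system vanishes for $i > \dim R$.

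For the termwise vanishing I would invoke the cited [Theorem $3.16$] of \emph{left derived functors}, of which the present theorem is advertised as the pair-of-ideals generalization: for any ideal $\mathfrak{a}$ of the Noetherian local ring $R$ and any linearly compact $R$-module $M$, one has ${\rm H}_i^{\mathfrak{a}}(M) = 0$ whenever $i > \dim R$. Applied termwise to the inverse system above, this forces the limit ${\rm H}_i^{\mathfrak{m},J}(M)$ to be zero in the required range, and no further argument is needed.

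The only delicate point is whether the cited theorem applies to \emph{every} $\mathfrak{a} \in \tilde{W}(\mathfrak{m},J)$ rather than merely to the maximal ideal $\mathfrak{m}$. If [Theorem $3.16$] is formulated only for $\mathfrak{m}$, I would instead use Theorem \ref{isomorfo a funtor} to identify ${\rm H}_i^{\mathfrak{a}}(M)$ with the left derived functor $\mathfrak{L}_i^{\mathfrak{a}}(M)$ of the $\mathfrak{a}$-adic completion, and then invoke a Greenlees-May-style bound on the cohomological dimension of $\Lambda_{\mathfrak{a}}$ acting on linearly compact $R$-modules in terms of the noetherian dimension, together with the inequality ${\rm Ndim}(M) \leq \dim R$. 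Either route produces the same conclusion, so I expect no substantive obstacle beyond locating the appropriate form of the cited vanishing.
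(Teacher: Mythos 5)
Your overall strategy---show that each term ${\rm H}_i^{\mathfrak{a}}(M)$, $\mathfrak{a} \in \tilde{W}(\mathfrak{m},J)$, vanishes for $i > \dim(R)$ and then pass to the inverse limit---is sound in shape (an inverse limit of zero modules is zero) and is genuinely different from the paper's proof, which never argues termwise over $\tilde{W}(\mathfrak{m},J)$: the paper writes $M = \varprojlim_{U \in \mathfrak{M}} M/U$ with Artinian quotients, commutes ${\rm H}_i^{\mathfrak{m},J}$ with that limit (Proposition \ref{comutatividade}), and settles the Artinian case by Matlis duality (Proposition \ref{dual}, Corollaries \ref{Corollary2.3} and \ref{completo}) together with vanishing of ${\rm H}^i_{\mathfrak{a}}$ on the Noetherian module ${\rm D}(M)$. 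If the theorem you cite really held for \emph{every} ideal $\mathfrak{a}$ of a Noetherian local ring, your first route would be complete and shorter. But that termwise vanishing is the entire content of the theorem, and the paper's own framing argues against your reading: its generalization is stated only for the pair $(\mathfrak{m},J)$, not for an arbitrary pair $(I,J)$, which indicates the cited result concerns ${\rm H}_i^{\mathfrak{m}}$ alone. Meanwhile, for $J \neq 0$ the set $\tilde{W}(\mathfrak{m},J)$ contains ideals that are far from $\mathfrak{m}$-primary; e.g.\ for $R = k[[x,y]]$, $\mathfrak{m} = (x,y)$, $J = (y)$ one has $(x) \in \tilde{W}(\mathfrak{m},J)$, and a vanishing theorem for ${\rm H}_i^{\mathfrak{m}}$ says nothing about ${\rm H}_i^{(x)}(M)$. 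So your main route is conditional on exactly the point in dispute, as you yourself flag.

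The fallback you propose does not close this gap, because it rests on a false inequality: ${\rm Ndim}(M) \leq \dim(R)$ fails for linearly compact modules that are not semidiscrete. Over $R = k[[x]]$, the module $M = \prod_{n \in \mathbb{N}} k$ (product topology, $x$ acting by zero) is linearly compact; partitioning a $k$-basis of $M$ into infinitely many infinite subsets $B_0, B_1, \ldots$ and setting $M_n = {\rm span}\left(B_0 \cup \cdots \cup B_n\right)$ gives an ascending chain all of whose successive quotients are infinite-dimensional, hence non-Noetherian, so ${\rm Ndim}(M) \geq 2 > 1 = \dim(R)$ (iterating the argument shows ${\rm Ndim}(M)$ is not finite at all). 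Consequently no bound of the form ``${\rm H}_i^{\mathfrak{a}}(M) = 0$ for $i > {\rm Ndim}(M)$'' can prove the theorem for such $M$. The reason the termwise statement is nevertheless true for an arbitrary ideal $\mathfrak{a}$ is the Artinian d\'evissage: ${\rm H}_i^{\mathfrak{a}}$ commutes with inverse limits of linearly compact modules (the single-ideal step inside the proof of Proposition \ref{comutatividade}), $M = \varprojlim_U M/U$ with $M/U$ Artinian, and for Artinian modules ${\rm Ndim} \leq \dim(R)$ does hold. But once you invoke that reduction you have reproduced the skeleton of the paper's proof, with your limit over $\tilde{W}(\mathfrak{m},J)$ as an extra layer rather than a shortcut; as written, the proposal has a genuine gap at its crux, and the natural repair collapses it into the paper's argument.
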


\begin{proof}
We first prove in the special case $M$ is an Artinian $R$-module. From Corollary \ref{completo}, we may assume in this case that $\left(R,\mathfrak{m}\right)$ is a complete ring. By \cite[Theorem $10.2.12$, item $\left(iii\right)$]{brodmann} we have that ${\rm D}\left({\rm D}\left(M\right)\right) \cong M$ and ${\rm D}\left(M\right)$ is a Noetherian $R$-module. Thus, we have ${\rm D}\left(M\right)$ a finitely generated $R$-module. By the Proposition \ref{dual}, item $\left(ii\right)$, we have that
\begin{center}
${\rm D}\left({\rm H}^{i}_{\mathfrak{m},J}\left({\rm D}\left(M\right)\right)\right) \cong {\rm H}_{i}^{\mathfrak{m},J}\left({\rm D}\left({\rm D}\left(M\right)\right)\right) \cong {\rm H}_{i}^{\mathfrak{m},J}\left(M\right)$.
\end{center}
Now, by \cite[Theorem $3.2$]{duality and vanishing of generalized local cohomology}, we have that ${\rm H}^{i}_{\mathfrak{a}}\left({\rm D}\left(M\right)\right) = 0$ for all $i > \dim\left(R\right)$ and for all $\mathfrak{a} \in \tilde{W}\left(I,J\right)$. By \cite[Theorem $3.2$]{cohomologia local de um par de ideais} it follows that
\begin{center}
${\rm H}^{i}_{\mathfrak{m},J}\left({\rm D}\left(M\right)\right) \cong \varinjlim_{\mathfrak{a} \in \tilde{W}\left(\mathfrak{m},J\right)} {\rm H}^{i}_{\mathfrak{a}}\left({\rm D}\left(M\right)\right)$.
\end{center}
Therefore, ${\rm H}^{i}_{\mathfrak{m},J}\left({\rm D}\left(M\right)\right) = 0$ for all $i > \dim\left(R\right)$. Thus, by the Corollary \ref{Corollary2.3}, item $\left(i\right)$, it follows that ${\rm D}\left({\rm H}^{i}_{\mathfrak{m},J}\left({\rm D}\left(M\right)\right)\right) = 0$ for all $i > \dim\left(R\right)$. Hence, ${\rm H}_{i}^{\mathfrak{m},J}\left(M\right) = 0$ for all $i > \dim\left(R\right)$.

Let $M$ be a linearly compact $R$-module. In this case, denote by $\mathfrak{M}$ a nuclear base of $M$. It follows from \cite[Property $4.7$]{macdonald} that $M = \varprojlim_{U \in \mathfrak{M}} \left(M/U\right)$, where the modules $M/U$ ($U \in \mathfrak{M}$) are Artinian $R$-modules. In virtue of Proposition \ref{comutatividade} we have that
\begin{center}
${\rm H}_{i}^{\mathfrak{m},J}\left(M\right) = {\rm H}_{i}^{\mathfrak{m},J}\left(\varprojlim_{U \in \mathfrak{M}} \left(M/U\right)\right) \cong \varprojlim_{U \in \mathfrak{M}} {\rm H}_{i}^{\mathfrak{m},J}\left(M/U\right)$.
\end{center}
By the part initial we have that ${\rm H}_{i}^{\mathfrak{m},J}\left(M/U\right) = 0$ for all $i > \dim\left(R\right)$ and for all $U \in \mathfrak{M}$. Therefore, ${\rm H}_{i}^{\mathfrak{m},J}\left(M\right) = 0$ for all $i > \dim\left(R\right)$. We conclude the proof.
\end{proof}

{\rm The following result is a generalization, to the case of a pair of ideals, of \cite[Lemma $4.2$]{uma teoria de homologia local para modulo linearmente compacto}.}

\begin{prop} Let $M$ be a semidiscrete linearly compact $R$-module where $R$ is Noetherian ring, and with ${\rm Soc}\left(M\right) = 0$. Then ${\rm H}_{i}^{I,J}\left(M\right) = 0$ for all $i > 0$.
\end{prop}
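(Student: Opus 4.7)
The plan is to reduce this statement directly to the analogous result for a single ideal, namely \cite[Lemma $4.2$]{uma teoria de homologia local para modulo linearmente compacto}, and then pass to the inverse limit using the defining formula
\[
{\rm H}_{i}^{I,J}\left(M\right) \;=\; \varprojlim_{\mathfrak{a} \in \tilde{W}\left(I,J\right)} {\rm H}_{i}^{\mathfrak{a}}\left(M\right).
\]

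First, I would verify that the hypotheses on $M$ are preserved when we look at $M$ relative to an arbitrary ideal $\mathfrak{a} \in \tilde{W}(I,J)$. The properties of being semidiscrete and linearly compact, as well as the assumption ${\rm Soc}(M) = 0$, are intrinsic properties of $M$ as an $R$-module and do not depend on the choice of ideal. Consequently, \cite[Lemma $4.2$]{uma teoria de homologia local para modulo linearmente compacto} applies to $M$ for each $\mathfrak{a} \in \tilde{W}(I,J)$, yielding ${\rm H}_{i}^{\mathfrak{a}}(M) = 0$ for every $i > 0$ and every $\mathfrak{a} \in \tilde{W}(I,J)$.

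Next, I would invoke the definition of local homology with respect to a pair of ideals. Since every term of the inverse system $\bigl\{{\rm H}_{i}^{\mathfrak{a}}(M)\bigr\}_{\mathfrak{a} \in \tilde{W}(I,J)}$ vanishes for $i > 0$, its inverse limit is zero, and therefore ${\rm H}_{i}^{I,J}(M) = 0$ for all $i > 0$. This is a purely formal step, since the inverse limit of the constant zero system is trivially zero.

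The only conceptual point to be careful about is that the ambient ring is Noetherian (which is required in the quoted Lemma $4.2$) and that the class $\tilde{W}(I,J)$ is nonempty; both are part of our standing hypotheses. No additional ingredient such as the linear compactness of ${\rm H}_{i}^{\mathfrak{a}}(M)$ (established in Theorem \ref{linearmente compacto}) or exactness of $\varprojlim$ on linearly compact modules is needed here, because the sole task is to take the inverse limit of a zero system. Thus the proof is essentially a one-line reduction, and I anticipate no substantive obstacle.
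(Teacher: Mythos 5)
Your proof is correct, but it takes a genuinely different (and shorter) route than the paper's. You apply \cite[Lemma $4.2$]{uma teoria de homologia local para modulo linearmente compacto} as a black box once for every $\mathfrak{a} \in \tilde{W}\left(I,J\right)$ --- its hypotheses ($R$ Noetherian, $M$ semidiscrete linearly compact, ${\rm Soc}\left(M\right) = 0$) do not involve the ideal --- obtaining ${\rm H}_{i}^{\mathfrak{a}}\left(M\right) = 0$ for all $i > 0$, and then conclude because the inverse limit of an identically zero system is zero. The paper instead argues in two stages: for Artinian $M$ it extracts from that same lemma an element $x \in \mathfrak{a}$ acting bijectively on $M$, hence on ${\rm H}_{i}^{I,J}\left(M\right)$, and then kills ${\rm H}_{i}^{I,J}\left(M\right)$ via $\mathfrak{a}$-separatedness (Proposition \ref{dual}, item $\left(i\right)$); for general $M$ it writes $M = \varprojlim_{U \in \mathfrak{M}} \left(M/U\right)$ with Artinian quotients, commutes ${\rm H}_{i}^{I,J}$ with the inverse limit (Proposition \ref{comutatividade}), and claims ${\rm Soc}\left(M/U\right) \subseteq {\rm Soc}\left(M\right) = 0$ to reduce to the first stage. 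Your route buys economy and robustness: it needs neither the separatedness result nor the commutation with inverse limits, and it sidesteps the paper's weakest step, since the inclusion ${\rm Soc}\left(M/U\right) \subseteq {\rm Soc}\left(M\right)$ fails in general --- a nonzero Artinian module always has nonzero socle, so nonzero Artinian quotients of $M$ cannot be socle-free (for the same reason, an Artinian module with zero socle is zero, so the paper's first stage concerns only the trivial module). The only point you should pin down is the exact statement of the quoted Lemma $4.2$: your argument needs it as the single-ideal ($J = 0$) vanishing statement, which is precisely what the paper's framing of this proposition as a ``generalization to a pair of ideals'' of that lemma asserts, even though the paper's own proof cites the lemma only for the multiplication-isomorphism fact.
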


\begin{proof}
The proof will be in two parts. Consider first that $M$ is an Artinian $R$-module. By \cite[Lemma $4.2$]{uma teoria de homologia local para modulo linearmente compacto} we have an isomorphism $M \stackrel{x}{\rightarrow} M$ for some $x \in \mathfrak{a}$ and for all $\mathfrak{a} \in \tilde{W}\left(I,J\right)$; it induces an isomorphism ${\rm H}_{i}^{\mathfrak{a}}\left(M\right)\stackrel{x}{\rightarrow} {\rm H}_{i}^{\mathfrak{a}}\left(M\right)$ for all $i > 0$ and for all $\mathfrak{a} \in \tilde{W}\left(I,J\right)$; since, by definition, we have that ${\rm H}_{i}^{I,J}\left(M\right) = \varprojlim_{\mathfrak{a} \in \tilde{W}\left(I,J\right)} {\rm H}_{i}^{\mathfrak{a}}\left(M\right)$ it follows that we have an isomorphism
\begin{center}
${\rm H}_{i}^{I,J}\left(M\right)\stackrel{x}{\rightarrow} {\rm H}_{i}^{I,J}\left(M\right)$ for all $i > 0$.
\end{center}
By Proposition \ref{dual} item $\left(i\right)$, we have
\begin{center}
${\rm H}_{i}^{I,J}\left(M\right) \cong x{\rm H}_{i}^{I,J}\left(M\right) = \bigcap_{s > 0} \ x^{s}{\rm H}_{i}^{I,J}\left(M\right)\subseteq \bigcap_{s > 0} \ \mathfrak{a}^{s}{\rm H}_{i}^{I,J}\left(M\right) = 0$
\end{center}
for all $i > 0$.

Now the second part. Let $M$ be a linearly compact $R$-module. In this case, denote by $\mathfrak{M}$ a nuclear base of $M$. It follows from \cite[Property $4.7$]{macdonald} that $M = \varprojlim_{U \in \mathfrak{M}} \left(M/U\right)$, where the modules $M/U$ ($U \in \mathfrak{M}$) are Artinian $R$-modules. In virtue of Proposition \ref{comutatividade} we have that
\begin{center}
${\rm H}_{i}^{I,J}\left(M\right) = {\rm H}_{i}^{I,J}\left(\varprojlim_{U \in \mathfrak{M}} \left(M/U\right)\right) \cong \varprojlim_{U \in \mathfrak{M}} {\rm H}_{i}^{I,J}\left(M/U\right)$.
\end{center}
Since ${\rm Soc}\left(M/U\right)\subseteq {\rm Soc}\left(M\right)$, for all $U \in \mathfrak{M}$, and by hypothesis ${\rm Soc}\left(M\right) = 0$ it follows that ${\rm Soc}\left(M/U\right) = 0$ for all $U \in \mathfrak{M}$. Thus, by the part initial we have that ${\rm H}_{i}^{I,J}\left(M/U\right) = 0$ for all $i > 0$ and for all $U \in \mathfrak{M}$. Therefore, ${\rm H}_{i}^{I,J}\left(M\right) = 0$, for all $i > 0$, as required.
\end{proof}

{\rm The following result is a generalization, to the case of a pair of ideals, of \cite[Theorem $4.8$]{uma teoria de homologia local para modulo linearmente compacto}.}

\begin{thm} Let $M$ be a linearly compact $R$-module where $R$ is Noetherian ring, with ${\rm N}\dim\left(M\right) = d$. Then, we have that ${\rm H}_{i}^{I,J}\left(M\right) = 0$ for all $i > d$.
\end{thm}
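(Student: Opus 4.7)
The plan is to reduce this statement to the already known single-ideal vanishing theorem \cite[Theorem $4.8$]{uma teoria de homologia local para modulo linearmente compacto}, which asserts that for a linearly compact $R$-module $M$ over a Noetherian ring $R$ with $\mathrm{Ndim}(M) = d$, the ordinary local homology modules ${\rm H}_i^{\mathfrak{a}}(M)$ vanish for every $i > d$ and for every ideal $\mathfrak{a}$ of $R$. This is exactly the ingredient we need, since our local homology with respect to a pair of ideals is by definition built out of the ordinary ones.

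First, I would fix $i > d$ and apply the Cuong--Nam vanishing theorem to each ideal $\mathfrak{a} \in \tilde{W}(I,J)$, observing that the hypothesis $\mathrm{Ndim}(M) = d$ is intrinsic to $M$ and does not depend on the choice of $\mathfrak{a}$; this yields ${\rm H}_i^{\mathfrak{a}}(M) = 0$ for every such $\mathfrak{a}$. Then, by the definition of the local homology with respect to $(I,J)$,
$$
{\rm H}_i^{I,J}(M) = \varprojlim_{\mathfrak{a} \in \tilde{W}(I,J)} {\rm H}_i^{\mathfrak{a}}(M) = \varprojlim_{\mathfrak{a} \in \tilde{W}(I,J)} 0 = 0,
$$
for every $i > d$, which is what was claimed.

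An alternative route, which would follow the two-step pattern of the previous two theorems in this section, is to first settle the case where $M$ is Artinian by using Matlis duality (Proposition \ref{dual}(ii)) together with a dual vanishing statement for the local cohomology ${\rm H}^i_{I,J}({\rm D}(M))$ in the range $i > \mathrm{Ndim}(M)$, and then to extend to a general linearly compact $M$ by writing $M = \varprojlim_{U \in \mathfrak{M}}(M/U)$ for a nuclear base $\mathfrak{M}$ (so each $M/U$ is Artinian) and applying Proposition \ref{comutatividade} to commute ${\rm H}_i^{I,J}$ with the inverse limit.

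I expect the direct approach above to be the cleanest, with no essential obstacle beyond invoking the Cuong--Nam theorem. In the two-step approach, the genuine work is in the Artinian case, where one must produce a local cohomology vanishing bound governed by $\mathrm{Ndim}$ on the dual side; this is the kind of step that could fail if one did not have the Matlis duality isomorphism already available as Proposition \ref{dual}(ii). Either way, the linearly compact hypothesis is used precisely to guarantee the inverse-limit commutations needed in both proofs.
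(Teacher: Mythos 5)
Your direct argument is correct, and it takes a genuinely different (and shorter) route than the paper's proof. Since ${\rm H}_{i}^{I,J}\left(M\right)$ is \emph{defined} as $\varprojlim_{\mathfrak{a} \in \tilde{W}\left(I,J\right)} {\rm H}_{i}^{\mathfrak{a}}\left(M\right)$, and the hypotheses of the single-ideal theorem of Cuong--Nam (\cite[Theorem $4.8$]{uma teoria de homologia local para modulo linearmente compacto}) concern only $M$ and $R$, not the ideal, you get ${\rm H}_{i}^{\mathfrak{a}}\left(M\right) = 0$ for all $i > d$ and all $\mathfrak{a} \in \tilde{W}\left(I,J\right)$ at once, and an inverse limit of zero modules is zero; no commutation of limits or exactness property is even needed. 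The paper instead re-runs the Cuong--Nam argument inside the pair-of-ideals setting: first the Artinian case, by induction on $d$ (base case $d = 0$ from the Artinian vanishing result of \cite{completamento I-adico}; inductive step via an element $x \in \mathfrak{a}$ with $xM = M$, the long exact sequence of Corollary \ref{sequencia exata de artinianos} applied to $0\rightarrow \left(0:_{M}x\right)\rightarrow M\stackrel{x}{\rightarrow} M\rightarrow 0$, the bound ${\rm N}\dim\left(0:_{M}x\right) \leq d - 1$, and the $\mathfrak{a}$-separatedness of Proposition \ref{dual}, item $\left(i\right)$, to kill the resulting $x$-divisible module); then the general linearly compact case by writing $M = \varprojlim_{U \in \mathfrak{M}}\left(M/U\right)$ over a nuclear base and invoking Proposition \ref{comutatividade} --- which is essentially your alternative two-step route, except that the Artinian case is settled by induction rather than by Matlis duality. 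What the paper's longer route buys is self-containedness in the pair-of-ideals machinery developed earlier (and it only uses the Artinian single-ideal result plus facts quoted from the \emph{proof} of Cuong--Nam's theorem); what your route buys is economy, by using that theorem as a black box exactly where its hypotheses apply. Both are valid proofs.
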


\begin{proof}
We do the proof in two cases. First the case in that $M$ is Artinian $R$-module. We prove this by induction on $d = {\rm N}\dim\left(M\right)$. When $d = 0$, by \cite[Proposition $4.8$]{completamento I-adico} it follows that ${\rm H}_{i}^{\mathfrak{a}}\left(M\right) = 0$ for all $i > 0$ and for all $\mathfrak{a} \in \tilde{W}\left(I,J\right)$; since, by definition, we have that ${\rm H}_{i}^{I,J}\left(M\right) = \varprojlim_{\mathfrak{a} \in \tilde{W}\left(I,J\right)} {\rm H}_{i}^{\mathfrak{a}}\left(M\right)$ it follows that ${\rm H}_{i}^{I,J}\left(M\right) = 0$ for all $i > 0$. Suppose now that $d > 0$. According to the proof of \cite[Proposition $4.8$]{completamento I-adico} we may assume without loss of generality that there exists an $x \in \mathfrak{a}$ such that $xM = M$ for all $\mathfrak{a} \in \tilde{W}\left(I,J\right)$. Thus the short exact sequence of Artinian modules
\begin{center}
$0\rightarrow \left(0 :_{M} x\right)\rightarrow M\stackrel{.x}{\rightarrow} M\rightarrow 0$
\end{center}
gives rise to a long exact sequence, by Corollary \ref{sequencia exata de artinianos}
$$
\begin{array}{lll}
\ldots \rightarrow {\rm H}_{i}^{I,J}\left(\left(0 :_{M} x\right)\right)\rightarrow {\rm H}_{i}^{I,J}\left(M\right)\stackrel{.x}{\rightarrow} {\rm H}_{i}^{I,J}\left(M\right)\rightarrow {\rm H}_{i - 1}^{I,J}\left(\left(0 :_{M} x\right)\right)\rightarrow \ldots.
\end{array}
$$
According to \cite[Lemma $4.7$]{uma teoria de homologia local para modulo linearmente compacto} we have ${\rm N}\dim\left(\left(0 :_{M} x\right)\right) \leq {\rm N}\dim\left(M\right) - 1 = d - 1$. It then follows from the inductive hypothesis that ${\rm H}_{i}^{I,J}\left(\left(0 :_{M} x\right)\right) = 0$ for all $i > d - 1$. Hence ${\rm H}_{i}^{I,J}\left(M\right) \cong x{\rm H}_{i}^{I,J}\left(M\right)$ for all $i > d$, therefore $${\rm H}_{i}^{I,J}\left(M\right) \cong \bigcap_{s > 0} \ x^{s}{\rm H}_{i}^{I,J}\left(M\right)\subseteq \bigcap_{s > 0} \ \mathfrak{a}^{s}{\rm H}_{i}^{I,J}\left(M\right) = 0$$ \noindent by Proposition \ref{dual}, item $\left(i\right)$. This completes the inductive step.

Now the second case. Let $M$ be a linearly compact $R$-module. In this case, denote by $\mathfrak{M}$ a nuclear base of $M$. It follows from \cite[Property $4.7$]{macdonald} that $M = \varprojlim_{U \in \mathfrak{M}} \left(M/U\right)$, where the modules $M/U$ ($U \in \mathfrak{M}$) are Artinian $R$-modules. In virtue of Proposition \ref{comutatividade} we have that
\begin{center}
${\rm H}_{i}^{I,J}\left(M\right) = {\rm H}_{i}^{I,J}\left(\varprojlim_{U \in \mathfrak{M}} \left(M/U\right)\right) \cong \varprojlim_{U \in \mathfrak{M}} {\rm H}_{i}^{I,J}\left(M/U\right)$.
\end{center}
According to the proof of \cite[Theorem $4.8$]{uma teoria de homologia local para modulo linearmente compacto} we have that ${\rm N}\dim\left(M/U\right) \leq {\rm N}\dim\left(M\right)$. By the part initial we have that ${\rm H}_{i}^{I,J}\left(M/U\right) = 0$ for all $i > d$ and for all $U \in \mathfrak{M}$. Therefore, ${\rm H}_{i}^{I,J}\left(M\right) = 0$ for all $i > {\rm N}\dim\left(M\right)$. We conclude the proof.
\end{proof}

\begin{prop} Let $\left(R,\mathfrak{m}\right)$ be a local Noetherian ring and $M$ a non-zero semidiscrete linearly compact $R$-module. Thus, there exists an element $x \in \mathfrak{a}$, for some $\mathfrak{a} \in \tilde{W}\left(\mathfrak{m},J\right)$, such that $xM = M$ and $\left(0 :_{M} x\right) = 0$ if and only if ${\rm H}_{i}^{\mathfrak{m},J}\left(M\right) = 0$ for all $i \geq 0$.
\end{prop}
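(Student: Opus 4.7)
I would split the biconditional into its two implications. For the direction (existence of $x$) $\Rightarrow$ (vanishing), the argument is parallel to the one used in the preceding proposition on $\mathrm{Soc}\left(M\right) = 0$. Given $x \in \mathfrak{a}$ with $xM = M$ and $\left(0 :_M x\right) = 0$, multiplication by $x$ is an automorphism of $M$; by functoriality it induces an automorphism of ${\rm H}_i^{\mathfrak{m},J}\left(M\right)$ for every $i \geq 0$. Hence $x^s \cdot {\rm H}_i^{\mathfrak{m},J}\left(M\right) = {\rm H}_i^{\mathfrak{m},J}\left(M\right)$ for every $s \geq 1$, and combining with the $\mathfrak{a}$-separation from Proposition \ref{dual}, item $\left(i\right)$, gives
\[
{\rm H}_i^{\mathfrak{m},J}\left(M\right) = \bigcap_{s > 0} x^s {\rm H}_i^{\mathfrak{m},J}\left(M\right) \subseteq \bigcap_{s > 0} \mathfrak{a}^s {\rm H}_i^{\mathfrak{m},J}\left(M\right) = 0.
\]

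For the converse, my plan is to first reduce to the case where $M$ is Artinian in the now-standard manner of the preceding two vanishing theorems: write $M = \varprojlim_{U \in \mathfrak{M}} M/U$ for a nuclear base $\mathfrak{M}$ and use Proposition \ref{comutatividade}, which gives ${\rm H}_i^{\mathfrak{m},J}\left(M\right) \cong \varprojlim_{U} {\rm H}_i^{\mathfrak{m},J}\left(M/U\right)$. For Artinian $M/U$, the inverse system $\left\{\mathfrak{a}^{t}\left(M/U\right)\right\}_{t}$ is stationary for each $\mathfrak{a} \in \tilde{W}\left(\mathfrak{m},J\right)$, so $\Lambda_{\mathfrak{m},J}\left(M/U\right) = 0$ translates into $\mathfrak{a}\left(M/U\right) = M/U$ for some such $\mathfrak{a}$; \cite[Proposition $1.1$, item $\left(i\right)$]{torsion theory co-cohen macaulay} then yields $x \in \mathfrak{a}$ with $x\left(M/U\right) = M/U$. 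I would promote this to a single $x \in \mathfrak{a}$ acting surjectively on $M$ by a compatibility argument based on the semidiscrete and linearly compact hypotheses. Having such an $x$, the equality $\left(0 :_M x\right) = 0$ follows by applying Theorem \ref{sequencia exata de linearmente compacto} to
\[
0 \rightarrow \left(0 :_M x\right) \rightarrow M \xrightarrow{\;\cdot x\;} M \rightarrow 0
\]
(a short exact sequence precisely because $xM = M$), whose associated long exact sequence together with the hypothesis ${\rm H}_i^{\mathfrak{m},J}\left(M\right) = 0$ forces ${\rm H}_i^{\mathfrak{m},J}\left(\left(0 :_M x\right)\right) = 0$ for every $i \geq 0$; an induction on the Noetherian dimension of $\left(0 :_M x\right)$, together with the already-established forward implication applied to this submodule, then delivers $\left(0 :_M x\right) = 0$.

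The main obstacle I expect is exactly the step in which surjective multiplications on the individual Artinian quotients $M/U$ are assembled into a single surjective multiplication-by-$x$ on $M$: inverse limits of surjections are not in general surjective, so this cannot be done by pure diagram chasing. The hypothesis that $M$ is simultaneously semidiscrete and linearly compact should be what makes the passage possible, through continuity of multiplication by $x$ and closedness of every submodule, in the spirit of Proposition \ref{homomorfismo sobrejetivo}.
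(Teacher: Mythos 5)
Your forward implication ($xM = M$ and $\left(0:_M x\right) = 0$ imply vanishing) is correct and is essentially verbatim the paper's argument: bijectivity of multiplication by $x$ passes to ${\rm H}_i^{\mathfrak{m},J}\left(M\right)$ by functoriality, and $\mathfrak{a}$-separatedness (Proposition \ref{dual}, item $\left(i\right)$) kills the module. The converse is where your proposal breaks down, and the fatal gap occurs one step \emph{earlier} than the obstacle you flagged. Proposition \ref{comutatividade} gives ${\rm H}_i^{\mathfrak{m},J}\left(M\right) \cong \varprojlim_{U} {\rm H}_i^{\mathfrak{m},J}\left(M/U\right)$, so your hypothesis only says that this inverse limit vanishes; you cannot conclude that ${\rm H}_0^{\mathfrak{m},J}\left(M/U\right) = \Lambda_{\mathfrak{m},J}\left(M/U\right) = 0$ for the individual quotients, which is exactly what your next step uses. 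Inverse limits of non-zero modules are routinely zero --- e.g. $\varprojlim\left(\cdots \stackrel{p}{\longrightarrow} \mathbb{Z} \stackrel{p}{\longrightarrow} \mathbb{Z}\right) = 0$ --- so vanishing hypotheses do not descend from the limit to its terms. This is precisely why the Artinian-reduction template of the paper's preceding vanishing theorems works only in the direction ``vanishing for each $M/U$ implies vanishing for $M$,'' never the reverse. The two further soft spots you mention are also real: the elements $x_U$ (indeed the ideals $\mathfrak{a}_U$) vary with $U$ and no gluing mechanism is supplied, and the closing claim that the forward implication plus induction on Noetherian dimension forces $\left(0:_M x\right) = 0$ is not an argument, since the forward implication deduces vanishing of homology \emph{from} the existence of such an element and can never be used to show that a module is zero.

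The paper's proof of the converse never leaves $M$, and this is what you should imitate. From ${\rm H}_i^{\mathfrak{m},J}\left(M\right) = 0$ for all $i$ it deduces ${\rm H}_i^{\mathfrak{a}}\left(M\right) = 0$ for a \emph{single} ideal $\mathfrak{a} \in \tilde{W}\left(\mathfrak{m},J\right)$ (citing \cite[Remark $4.6$]{on formal local cohomology}); then \cite[Corollary $2.5$]{completamento I-adico}, a statement valid for linearly compact modules, converts $\Lambda_{\mathfrak{a}}\left(M\right) = 0$ into $\mathfrak{a}M = M$, whence $xM = M$ for some $x \in \mathfrak{a}$. The long exact sequence of Theorem \ref{sequencia exata de linearmente compacto} applied to $0 \rightarrow \left(0:_M x\right) \rightarrow M \stackrel{x}{\rightarrow} M \rightarrow 0$ then gives ${\rm H}_i^{\mathfrak{m},J}\left(\left(0:_M x\right)\right) = 0$ for all $i$ (this part of your plan agrees with the paper), and the conclusion $\left(0:_M x\right) = 0$ is obtained not by induction but from two cited facts: $\left(0:_M x\right)$ is Artinian by \cite[Corollary $1$]{zoschinger}, and an Artinian module whose local homology vanishes in all degrees is zero by \cite[Proposition $4.10$]{completamento I-adico}. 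These three ingredients --- descent to a single $\mathfrak{a}$, the criterion $\Lambda_{\mathfrak{a}}\left(M\right) = 0 \Rightarrow \mathfrak{a}M = M$ for linearly compact $M$, and the Artinian-ness of $\left(0:_M x\right)$ --- are exactly what your outline is missing.
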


\begin{proof}
Let ${\rm H}_{i}^{\mathfrak{m},J}\left(M\right) = 0$ for all $i \geq 0$. Thus, by \cite[Remark $4.6$]{on formal local cohomology} we have ${\rm H}_{i}^{\mathfrak{a}}\left(M\right) = 0$ for all $i \geq 0$ and for some $\mathfrak{a} \in \tilde{W}\left(\mathfrak{m},J\right)$. By \cite[Corollary $2.5$]{completamento I-adico} we have $\mathfrak{a}M = M$; we obtain then that $xM = M$ for some $x \in \mathfrak{a}$. On the other hand, it follows from the short exact sequence of linearly compact $R$-modules $0\rightarrow \left(0 :_{M} x\right)\rightarrow M\stackrel{x}{\rightarrow} M\rightarrow 0$ that we have the long exact sequence, by Theorem \ref{sequencia exata de linearmente compacto}
\begin{center}
$\ldots \rightarrow {\rm H}_{i}^{\mathfrak{m},J}\left(\left(0 :_{M} x\right)\right)\rightarrow {\rm H}_{i}^{\mathfrak{m},J}\left(M\right)\stackrel{x}{\rightarrow} {\rm H}_{i}^{\mathfrak{m},J}\left(M\right)\rightarrow {\rm H}_{i - 1}^{\mathfrak{m},J}\left(\left(0 :_{M} x\right)\right)\rightarrow \ldots$
\end{center}
and so it follows that ${\rm H}_{i}^{\mathfrak{m},J}\left(\left(0 :_{M} x\right)\right) = 0$ for all $i \geq 0$. Thus, by \cite[Remark $4.6$]{on formal local cohomology} it follows that ${\rm H}_{i}^{\mathfrak{a}}\left(\left(0 :_{M} x\right)\right) = 0$ for some $\mathfrak{a} \in \tilde{W}\left(\mathfrak{m},J\right)$ and for all $i \geq 0$. Since $\left(0 :_{M} x\right)$ is Artinian by \cite[Corollary $1$]{zoschinger}, $\left(0 :_{M} x\right) = 0$ by \cite[Proposition $4.10$]{completamento I-adico}. Conversely, suppose that $xM = M$ and $\left(0 :_{M} x\right) = 0$, then for all $i \geq 0$
\begin{center}
${\rm H}_{i}^{\mathfrak{m},J}\left(M\right) = x{\rm H}_{i}^{\mathfrak{m},J}\left(M\right) = \bigcap_{s > 0} \ x^{s}{\rm H}_{i}^{\mathfrak{m},J}\left(M\right)\subseteq \bigcap_{s > 0} \ \mathfrak{a}^{s}{\rm H}_{i}^{\mathfrak{m},J}\left(M\right) = 0$
\end{center}
by Proposition \ref{dual}, item $\left(i\right)$.
\end{proof}

\begin{prop} Let $M$ be an Artinian $R$-module where $R$ is a local ring, with maximal ideal $\mathfrak{m}$, and Noetherian. Then the following conditions are equivalent:
\begin{itemize}
\item[$\left(i\right)$] ${\rm D}\left(M\right)$ is $\left(I,J\right)$-torsion $R$-module.
\item[$\left(ii\right)$] ${\rm H}_{i}^{I,J}\left(M\right) = 0$ for all integers $i > 0$.
\end{itemize}
\end{prop}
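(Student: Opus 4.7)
The plan is to transfer the equivalence to the Matlis dual $D(M)$ via the machinery already built up in Section~2, and then invoke a characterization of $(I,J)$-torsion modules by vanishing of higher $(I,J)$-cohomology. First, I would apply Corollary~\ref{Corollary2.3}(ii): since $M$ is Artinian, for each $i \geq 0$ we have $\mathrm{H}_i^{I,J}(M) = 0$ if and only if $\mathrm{H}^i_{I,J}(D(M)) = 0$. So condition (ii) is equivalent to $\mathrm{H}^i_{I,J}(D(M)) = 0$ for every $i > 0$. Note that since $M$ is Artinian over the local Noetherian ring $R$, $N := D(M)$ carries a natural structure of finitely generated $\widehat{R}$-module, a fact that will be used below.

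The problem is thereby reduced to proving, for this $N$, that $N$ is $(I,J)$-torsion if and only if $\mathrm{H}^i_{I,J}(N) = 0$ for every $i > 0$. For the forward direction I would observe that if $N$ is $(I,J)$-torsion, then every associated prime $\mathfrak{p} = \mathrm{Ann}_R(x)$ of $N$ lies in $\widetilde{W}(I,J)$: the relation $I^n x \subseteq Jx$ forces $I^n \subseteq \mathfrak{p} + J$. Consequently the injective hull $E(N) = \bigoplus_{\mathfrak{p} \in \mathrm{Ass}(N)} E(R/\mathfrak{p})^{n_\mathfrak{p}}$ is itself $(I,J)$-torsion, and iterating this on cosyzygies yields an injective resolution $N \to E^\bullet$ lying entirely within the $(I,J)$-torsion subcategory. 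Then $\Gamma_{I,J}$ acts as the identity on $E^\bullet$, and the higher derived functors vanish automatically.

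For the reverse direction I would form the short exact sequence $0 \to \Gamma_{I,J}(N) \to N \to Q \to 0$ with $Q := N/\Gamma_{I,J}(N)$ being $(I,J)$-torsion-free by construction, apply the forward direction to the $(I,J)$-torsion submodule $\Gamma_{I,J}(N)$, and invoke the long exact sequence for $\mathrm{H}^{\bullet}_{I,J}$ to conclude that $\mathrm{H}^i_{I,J}(Q) = 0$ for every $i \geq 0$. Since $N$ is finitely generated over $\widehat{R}$, so is $Q$. Putting back through Corollary~\ref{Corollary2.3}(ii), condition (i) is then the assertion $Q = 0$.

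I expect the main obstacle to be this last point, namely showing that a nonzero finitely generated $(I,J)$-torsion-free module over $\widehat{R}$ cannot have all of its $(I,J)$-cohomology vanish. My approach would be a support argument using the characterization above of associated primes of $(I,J)$-torsion modules: choose a minimal element $\mathfrak{p} \in \mathrm{Ass}(Q)$, which by $(I,J)$-torsion-freeness must lie outside $\widetilde{W}(I,J)$, then localize at $\mathfrak{p}$ and exploit the cofinal description of $\widetilde{W}(I,J)$ from \cite{cohomologia local de um par de ideais} to produce an explicit nonvanishing local cohomology group of $N_{\mathfrak{p}}$, contradicting the hypothesis and forcing $Q = 0$.
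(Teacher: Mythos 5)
Your high-level route is the same as the paper's: use Corollary~\ref{Corollary2.3} and Proposition~\ref{dual}, item $\left(ii\right)$, together with Matlis duality (${\rm D}\left({\rm D}\left(M\right)\right) \cong M$ and ${\rm D}\left(M\right)$ Noetherian) to reduce the proposition to the assertion that the finitely generated module $N = {\rm D}\left(M\right)$ is $\left(I,J\right)$-torsion if and only if ${\rm H}^{i}_{I,J}\left(N\right) = 0$ for all $i > 0$. The paper disposes of that assertion by simply citing \cite[Corollary $4.2$]{cohomologia local de um par de ideais}, whereas you attempt to prove it. Your forward half (torsion implies vanishing), via an injective resolution all of whose terms are $\left(I,J\right)$-torsion, is correct, and so is your reduction of the converse to showing that $Q = N/\Gamma_{I,J}\left(N\right)$, which is finitely generated, $\left(I,J\right)$-torsion-free, and has ${\rm H}^{i}_{I,J}\left(Q\right) = 0$ for all $i \geq 0$, must be zero.

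The genuine gap is precisely at the step you flag as the main obstacle, and your plan for it would fail. You propose to take a minimal $\mathfrak{p} \in {\rm Ass}\left(Q\right)$ --- necessarily outside $W\left(I,J\right)$ --- localize there, and produce a nonvanishing cohomology group. But no contradiction can come from a prime outside $W\left(I,J\right)$: vanishing of all ${\rm H}^{i}_{I,J}$ is perfectly compatible with $Q_{\mathfrak{p}} \neq 0$ for such $\mathfrak{p}$. Concretely, take $R = \mathbb{Z}$, $I = \left(2\right)$, $J = \left(0\right)$, $Q = \mathbb{Z}/3\mathbb{Z}$: this $Q$ is nonzero, finitely generated, $\left(I,J\right)$-torsion-free, and ${\rm H}^{i}_{I,J}\left(Q\right) = 0$ for every $i \geq 0$, since $2$ acts invertibly on $Q$; here ${\rm Ass}\left(Q\right) = \left\{\left(3\right)\right\}$ lies outside $W\left(I,J\right) = \left\{\left(2\right)\right\}$, and localizing at $\left(3\right)$ turns $I$ into the unit ideal, so every module has vanishing cohomology there and no contradiction is available. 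Thus the implication you need is false over non-local rings, and any correct argument must use locality; it enters in exactly one way, namely $\mathfrak{m} \in W\left(I,J\right)$ always (because $I \subseteq \mathfrak{m} \subseteq \mathfrak{m} + J$). Hence if $Q \neq 0$ then $\mathfrak{m} \in {\rm Supp}\left(Q\right) \cap W\left(I,J\right)$ and ${\rm depth}_{R}\left(Q\right) < \infty$, so the depth formula $\inf\left\{i \;|\; {\rm H}^{i}_{I,J}\left(Q\right) \neq 0\right\} = \inf\left\{{\rm depth}\left(Q_{\mathfrak{p}}\right) \;|\; \mathfrak{p} \in W\left(I,J\right)\right\}$ of \cite[Theorem $4.1$]{cohomologia local de um par de ideais} yields a nonvanishing ${\rm H}^{i}_{I,J}\left(Q\right)$ --- the contradiction comes from a support prime \emph{inside} $W\left(I,J\right)$, not outside. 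A secondary point: you treat $N$ as a finitely generated $\widehat{R}$-module while $\Gamma_{I,J}$ and ${\rm H}^{i}_{I,J}$ are formed over $R$; passing between $\left(I,J\right)$ and $\left(I\widehat{R},J\widehat{R}\right)$ requires the compatibility $\phi\left(J\right) = J\widehat{R}$ of Theorem~\ref{extensao} and Corollary~\ref{completo}, which Remark~\ref{observacao de ryo takahashi} warns is not automatic, so this passage also needs justification.
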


\begin{proof}
$\left(i\right)\Rightarrow \left(ii\right)$: From Corollary \ref{completo}, we may assume in this case that $\left(R,\mathfrak{m}\right)$ is a complete ring. Since $M$ is a Artinian $R$-module it follows, by \cite[Theorem $10.2.12$, item $\left(iii\right)$]{brodmann}, that ${\rm D}\left(M\right)$ is a Noetherian $R$-module. Therefore, ${\rm D}\left(M\right)$ is finitely generated. Since $\Gamma_{I,J}\left({\rm D}\left(M\right)\right) = {\rm D}\left(M\right)$ it follows, by \cite[Corollary $4.2$]{cohomologia local de um par de ideais}, that ${\rm H}^{i}_{I,J}\left({\rm D}\left(M\right)\right) = 0$, for all $i > 0$, $i \in \mathbb{Z}$. Therefore, by the Corollary \ref{Corollary2.3}, item $\left(i\right)$, we have ${\rm D}\left({\rm H}^{i}_{I,J}\left({\rm D}\left(M\right)\right)\right) = 0$, for all $0 < i \in \mathbb{Z}$. On the other hand, we have, by Proposition \ref{dual}, item $\left(ii\right)$, that ${\rm D}\left(H^{i}_{I,J}\left({\rm D}\left(M\right)\right)\right) \cong {\rm H}_{i}^{I,J}\left({\rm D}\left({\rm D}\left(M\right)\right)\right)$; and as, by \cite[Theorem $10.2.12$, item $\left(iii\right)$]{brodmann}, we have that ${\rm D}\left({\rm D}\left(M\right)\right) \cong M$ it follows that ${\rm H}_{i}^{I,J}\left(M\right) = 0$ for all $i > 0$.\newline
$\left(ii\right)\Rightarrow \left(i\right)$: From Corollary \ref{completo}, we may assume in this case that $\left(R,\mathfrak{m}\right)$ is a complete ring. Since ${\rm H}_{i}^{I,J}\left(M\right) = 0$ it follows that ${\rm D}\left({\rm H}_{i}^{I,J}\left(M\right)\right) = 0$; thus, we have that ${\rm H}^{i}_{I,J}\left({\rm D}\left(M\right)\right) = 0$, since ${\rm D}\left({\rm H}_{i}^{I,J}\left(M\right)\right) \cong {\rm H}^{i}_{I,J}\left({\rm D}\left(M\right)\right)$, by Corollary \ref{Corollary2.3}, item $\left(ii\right)$. Now, by the \cite[Theorem $10.2.12$, item $\left(iii\right)$]{brodmann} we have that ${\rm D}\left(M\right)$ is finitely generated $R$-module, because is Noetherian. Therefore, by \cite[Corollary $4.2$]{cohomologia local de um par de ideais}, it follows that ${\rm D}\left(M\right)$ is $\left(I,J\right)$-torsion $R$-module.
\end{proof}

{\rm The following result is a generalization, to the case of a pair of ideals, of \cite[Corollary $4.4$]{cohomologia local de um par de ideais}.}

\begin{thm} Let $M$ be a linearly compact module over a local ring, and Noetherian $R$. Suppose that $J \neq R$. Then ${\rm H}_{i}^{I,J}\left(M\right) = 0$ for any $i > \dim\left(R/J\right)$.
\end{thm}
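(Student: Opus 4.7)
The plan is to follow the two-step reduction pattern used in the preceding theorems of Section~$3$: first reduce to the case of Artinian $M$ via a nuclear base, and then dispatch the Artinian case by Matlis dualizing the corresponding vanishing result for local cohomology with respect to the pair $(I,J)$.

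For the reduction, I would pick a nuclear base $\mathfrak{M}$ of $M$ and use \cite[Property $4.7$]{macdonald} to write $M = \varprojlim_{U \in \mathfrak{M}}(M/U)$ with each quotient $M/U$ Artinian. Commuting local homology past the inverse limit with Proposition \ref{comutatividade} gives
\[
{\rm H}_{i}^{I,J}(M) \cong \varprojlim_{U \in \mathfrak{M}} {\rm H}_{i}^{I,J}(M/U),
\]
so it suffices to prove the vanishing for each Artinian quotient.

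For an Artinian $N$, I would invoke Corollary \ref{completo} to assume $R$ is $\mathfrak{m}$-adically complete (this preserves $\dim(R/J)$ and the assumption $J \neq R$). Under completeness, \cite[Theorem $10.2.12$, item $(iii)$]{brodmann} guarantees that $D(N)$ is a finitely generated (hence Noetherian) $R$-module and $D(D(N)) \cong N$. Applying Proposition \ref{dual}(ii) to $D(N)$ then yields
\[
{\rm H}_{i}^{I,J}(N) \cong {\rm H}_{i}^{I,J}(D(D(N))) \cong D\bigl({\rm H}^{i}_{I,J}(D(N))\bigr).
\]
Since $D(N)$ is finitely generated and $J \neq R$, \cite[Corollary $4.4$]{cohomologia local de um par de ideais} gives ${\rm H}^{i}_{I,J}(D(N)) = 0$ for all $i > \dim(R/J)$, and dualizing a zero module produces a zero module, so ${\rm H}_{i}^{I,J}(N) = 0$ in the same range.

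The main obstacle I anticipate is administrative rather than conceptual: verifying that the hypothesis $\phi(J) = J\hat{R}$ of Corollary \ref{completo} is met in the situation at hand (so the passage to the complete case is legitimate), and confirming that the nuclear-base quotients $M/U$ really are Artinian $R$-modules compatible with the pair-of-ideals local homology. Both points are handled exactly as in the two preceding theorems of Section~$3$, so once they are in place the rest of the argument is a direct dualization of the known finitely-generated vanishing theorem for ${\rm H}^{i}_{I,J}$.
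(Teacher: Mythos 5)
Your proposal matches the paper's own proof essentially step for step: the same reduction to the Artinian case via a nuclear base and Proposition \ref{comutatividade}, followed by the same dualization argument using Corollary \ref{completo}, \cite[Theorem $10.2.12$, item $(iii)$]{brodmann}, Proposition \ref{dual}, item $(ii)$, and \cite[Corollary $4.4$]{cohomologia local de um par de ideais}. The only cosmetic differences are the order of the two cases and that you conclude ${\rm D}\left({\rm H}^{i}_{I,J}\left({\rm D}\left(M\right)\right)\right) = 0$ directly from ${\rm D}(0)=0$ rather than citing Corollary \ref{Corollary2.3}, item $(i)$ as the paper does.
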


\begin{proof}
We divide the proof in two cases. The first case, consider that $M$ is an Artinian $R$-module. From Corollary \ref{completo}, we may assume in this case that $\left(R,\mathfrak{m}\right)$ is a complete ring; by \cite[Theorem $10.2.12$, item $\left(iii\right)$]{brodmann} we have that ${\rm D}\left({\rm D}\left(M\right)\right) \cong M$ and ${\rm D}\left(M\right)$ is a Noetherian $R$-module. Thus, we have ${\rm D}\left(M\right)$ a finitely generated $R$-module. By the Proposition \ref{dual}, item $\left(ii\right)$, we have that
\begin{center}
${\rm D}\left({\rm H}^{i}_{I,J}\left({\rm D}\left(M\right)\right)\right) \cong {\rm H}_{i}^{I,J}\left({\rm D}\left({\rm D}\left(M\right)\right)\right) \cong {\rm H}_{i}^{I,J}\left(M\right)$.
\end{center}
We have, by \cite[Corollary $4.4$]{cohomologia local de um par de ideais}, that ${\rm H}^{i}_{I,J}\left({\rm D}\left(M\right)\right) = 0$, for any $i > \dim\left(R/J\right)$. Therefore, by the Corollary \ref{Corollary2.3}, item $\left(i\right)$, we have ${\rm D}\left({\rm H}^{i}_{I,J}\left({\rm D}\left(M\right)\right)\right) = 0$, for all $i > \dim\left(R/J\right)$. Since, as seen above, we have that ${\rm D}\left({\rm H}^{i}_{I,J}\left({\rm D}\left(M\right)\right)\right) \cong {\rm H}_{i}^{I,J}\left(M\right)$, it follows the result.

The second case, let $M$ be a linearly compact $R$-module. In this case, denote by $\mathfrak{M}$ a nuclear base of $M$. It follows from \cite[Property $4.7$]{macdonald} that $M = \varprojlim_{U \in \mathfrak{M}} \left(M/U\right)$, where the modules $M/U$ ($U \in \mathfrak{M}$) are Artinian $R$-modules. In virtue of Proposition \ref{comutatividade} we have that
\begin{center}
${\rm H}_{i}^{I,J}\left(M\right) = {\rm H}_{i}^{I,J}\left(\varprojlim_{U \in \mathfrak{M}} \left(M/U\right)\right) \cong \varprojlim_{U \in \mathfrak{M}} {\rm H}_{i}^{I,J}\left(M/U\right)$.
\end{center}
By the part initial we have that ${\rm H}_{i}^{I,J}\left(M/U\right) = 0$ for all $i > \dim\left(R/J\right)$ and for all $U \in \mathfrak{M}$. Therefore, ${\rm H}_{i}^{I,J}\left(M\right) = 0$ for all $i > \dim\left(R/J\right)$, as required.
\end{proof}

\begin{prop} Let $n$ be a nonnegative integer. Suppose that ${\rm H}^{i}_{I,J}\left(R\right) = 0$ for all $i > n$, where $R$ is Noetherian local ring. Then ${\rm H}_{i}^{I,J}\left(M\right) = 0$ for all $i > n$ and for any linearly compact $R$-module $M$, which is not necessarily finitely generated.
\end{prop}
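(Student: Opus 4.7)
The plan is to follow the Matlis-duality template of Section 2: reduce the local-homology vanishing to a local-cohomology vanishing, then apply the hypothesis on $R$. First, if $M$ is linearly compact with nuclear base $\mathfrak{M}$, then $M = \varprojlim_{U \in \mathfrak{M}} M/U$ by \cite[Property $4.7$]{macdonald} and each $M/U$ is Artinian, so Proposition \ref{comutatividade} reduces us to the Artinian case via
\[
{\rm H}_i^{I,J}(M) \cong \varprojlim_{U \in \mathfrak{M}} {\rm H}_i^{I,J}(M/U).
\]

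\textbf{Dualization.} Assuming $M$ Artinian, Corollary \ref{completo} lets us work over the completion $\hat{R}$, and the vanishing hypothesis on ${\rm H}^i_{I,J}(R)$ transfers to $\hat{R}$ by flatness of $R \to \hat{R}$. Over $\hat{R}$ the module $D(M)$ is finitely generated (by \cite[Theorem $10.2.12$, item $\left(iii\right)$]{brodmann}) and $D(D(M)) \cong M$. Proposition \ref{dual}(ii) then gives
\[
{\rm H}_i^{I,J}(M) \cong D({\rm H}^i_{I,J}(D(M))),
\]
so the task reduces to showing ${\rm H}^i_{I,J}(D(M)) = 0$ for $i > n$.

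\textbf{Main step and obstacle.} The crux is to propagate the hypothesis ${\rm H}^i_{I,J}(R) = 0$ for $i > n$ from $R$ to the finitely generated module $D(M)$. I would use a prime filtration $0 = N_0 \subset N_1 \subset \cdots \subset N_k = D(M)$ with $N_\ell / N_{\ell-1} \cong R/\mathfrak{p}_\ell$, inducting on $\ell$ via the long exact sequences of ${\rm H}^{*}_{I,J}$ to reduce to the cyclic case $R/\mathfrak{p}$. For the latter, combine $0 \to \mathfrak{p} \to R \to R/\mathfrak{p} \to 0$ with the identity ${\rm H}^i_{I,J}(-) = \varinjlim_{\mathfrak{a} \in \tilde{W}(I,J)} {\rm H}^i_\mathfrak{a}(-)$ (which is exact in $\mathfrak{a}$) to transfer vanishing from $R$ to $R/\mathfrak{p}$, invoking a Hartshorne-type ``cohomological dimension is attained on the ring'' result for each ordinary ${\rm H}^i_\mathfrak{a}$. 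This cohomological-dimension step is the principal obstacle, since the pair-of-ideals cohomology does not come from a single \v{C}ech complex of bounded length and must be controlled through the directed system $\tilde{W}(I,J)$. Once ${\rm H}^i_{I,J}(D(M)) = 0$ for $i > n$ is established, applying $D(-)$ yields ${\rm H}_i^{I,J}(M) = 0$ and, combined with the initial reduction, completes the proof.
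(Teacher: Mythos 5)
Your reduction to the Artinian case via Proposition \ref{comutatividade} and the dualization via Corollary \ref{completo} and Proposition \ref{dual}, item $\left(ii\right)$, match the paper's proof exactly. The genuine gap is precisely the step you flag as ``the principal obstacle'': propagating ${\rm H}^{i}_{I,J}\left(R\right)=0$ for $i>n$ to the finitely generated module ${\rm D}\left(M\right)$. The paper does not prove this step at all; it simply cites \cite[Lemma $4.8$, item $\left(1\right)$]{cohomologia local de um par de ideais}, which is exactly the needed statement (vanishing of ${\rm H}^{i}_{I,J}\left(R\right)$ above degree $n$ forces vanishing of ${\rm H}^{i}_{I,J}\left(N\right)$ above degree $n$ for \emph{every} $R$-module $N$). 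So the hole in your argument is fillable by a known result, but your sketch of how to fill it yourself would fail: the hypothesis gives only the vanishing of the direct limit ${\rm H}^{i}_{I,J}\left(R\right)=\varinjlim_{\mathfrak{a}\in\tilde{W}\left(I,J\right)}{\rm H}^{i}_{\mathfrak{a}}\left(R\right)$, and this does \emph{not} imply ${\rm H}^{i}_{\mathfrak{a}}\left(R\right)=0$ for any individual $\mathfrak{a}$ (a direct limit of nonzero modules can vanish when the transition maps are not injective). Hence there is nothing to feed into a ``Hartshorne-type'' cohomological-dimension result for a single ordinary ${\rm H}^{i}_{\mathfrak{a}}$; that argument has no starting point.

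The prime-filtration route has a second, independent defect: after reducing to $R/\mathfrak{p}$, the sequence $0\to\mathfrak{p}\to R\to R/\mathfrak{p}\to 0$ yields ${\rm H}^{i}_{I,J}\left(R\right)\to{\rm H}^{i}_{I,J}\left(R/\mathfrak{p}\right)\to{\rm H}^{i+1}_{I,J}\left(\mathfrak{p}\right)$, so you still need vanishing for the finitely generated module $\mathfrak{p}$ in degree $i+1$, i.e.\ you are running in a circle. The correct elementary argument (essentially that of the cited lemma) stays at the level of ${\rm H}^{*}_{I,J}$ throughout: it commutes with direct limits, so one reduces to finitely generated $N$; then from $0\to K\to F\to N\to 0$ with $F$ finite free one gets ${\rm H}^{i}_{I,J}\left(F\right)\cong{\rm H}^{i}_{I,J}\left(R\right)^{\oplus r}=0$ for $i>n$, hence ${\rm H}^{i}_{I,J}\left(N\right)$ embeds in ${\rm H}^{i+1}_{I,J}\left(K\right)$, and a descending induction anchored by the bound ${\rm H}^{j}_{I,J}\left(N'\right)=0$ for $j>\dim\left(N'\right)$ (\cite[Theorem $4.7$]{cohomologia local de um par de ideais}) finishes. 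Finally, a smaller point: your appeal to flatness of $R\to\hat{R}$ to transfer the hypothesis needs care, since base change for pair-of-ideals cohomology requires the condition $\phi\left(J\right)=J\hat{R}$ (see Remark \ref{observacao de ryo takahashi}); this is the same hygiene issue that the paper's own proof glosses over when it ``assumes $R$ complete.''
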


\begin{proof}
Consider first the case in that $M$ is an Artinian $R$-module. From Corollary \ref{completo}, we may assume in this case that $\left(R,\mathfrak{m}\right)$ is a complete ring; by \cite[Theorem $10.2.12$, item $\left(iii\right)$]{brodmann} we have that ${\rm D}\left({\rm D}\left(M\right)\right) \cong M$ and ${\rm D}\left(M\right)$ is a Noetherian $R$-module. Thus, we have ${\rm D}\left(M\right)$ a finitely generated $R$-module. By the Proposition \ref{dual}, item $\left(ii\right)$, we have that
\begin{center}
${\rm D}\left({\rm H}^{i}_{I,J}\left({\rm D}\left(M\right)\right)\right) \cong {\rm H}_{i}^{I,J}\left({\rm D}\left({\rm D}\left(M\right)\right)\right) \cong {\rm H}_{i}^{I,J}\left(M\right)$.
\end{center}
By the \cite[Lemma $4.8$, item $\left(1\right)$]{cohomologia local de um par de ideais} we have that ${\rm H}^{i}_{I,J}\left({\rm D}\left(M\right)\right) = 0$, for all $i > n$. Therefore, by the Corollary \ref{Corollary2.3}, item $\left(i\right)$, it follows that ${\rm D}\left({\rm H}^{i}_{I,J}\left({\rm D}\left(M\right)\right)\right) = 0$ for all $i > n$ and then, it follows that ${\rm H}_{i}^{I,J}\left(M\right) = 0$ for all $i > n$, as required.

Now, let $M$ be a linearly compact $R$-module. In this case, denote by $\mathfrak{M}$ a nuclear base of $M$. It follows from \cite[Property $4.7$]{macdonald} that $M = \varprojlim_{U \in \mathfrak{M}} \left(M/U\right)$, where the modules $M/U$ ($U \in \mathfrak{M}$) are Artinian $R$-modules. In virtue of Proposition \ref{comutatividade} we have that
\begin{center}
${\rm H}_{i}^{I,J}\left(M\right) = {\rm H}_{i}^{I,J}\left(\varprojlim_{U \in \mathfrak{M}} \left(M/U\right)\right) \cong \varprojlim_{U \in \mathfrak{M}} {\rm H}_{i}^{I,J}\left(M/U\right)$.
\end{center}
By the part initial we have that ${\rm H}_{i}^{I,J}\left(M/U\right) = 0$ for all $i > n$ and for all $U \in \mathfrak{M}$. Therefore, ${\rm H}_{i}^{I,J}\left(M\right) = 0$ for all $i > n$.
\end{proof}

{\rm We have also the following results on the vanishing of local homology modules with respect to $\left(I,J\right)$, which are immediate consequences of results of \cite{cohomologia local de um par de ideais}.}

\begin{rem} {\rm Recall that the arithmetic rank of an ideal $I$, denoted by ${\rm ara}\left(I\right)$, is defined to be the least number of elements of $R$ required to generate an ideal which has the same radical as $I$.
\begin{itemize}
\item[$\left(i\right)$] Let $M$ be a finitely gerated $R$-module with $R$ be a local ring and Noetherian. Suppose that $J \neq R$. Then ${\rm H}_{i}^{I,J}\left({\rm D}\left(M\right)\right) = 0$ for any $i > \dim\left(M/JM\right)$. (immediate consequence of \cite[Theorem $4.3$]{cohomologia local de um par de ideais})
\item[$\left(ii\right)$] (immediate consequence of \cite[Theorem $4.7$]{cohomologia local de um par de ideais}) Let $M$ be a finitely generated $R$-module, where $R$ is Noetherian local ring. Then
\begin{itemize}
\item[$\left(1\right)$] ${\rm H}_{i}^{I,J}\left({\rm D}\left(M\right)\right) = 0$ for all integers $i > \dim\left(M\right)$.
\item[$\left(2\right)$] ${\rm H}_{i}^{I,J}\left({\rm D}\left(M\right)\right) = 0$ for all integers $i > \dim\left(M/JM\right) + 1$.
\end{itemize}
\item[$\left(iii\right)$] Let $M$ be an $R$-module, where $R$ is Noetherian local ring. Then for any integer $i > {\rm ara}\left(I\bar{R}\right)$, where $\bar{R} = R/\sqrt{J + {\rm Ann}_{R}\left(M\right)}$, we have that ${\rm H}_{i}^{I,J}\left({\rm D}\left(M\right)\right) = 0$. (immediate consequence of \cite[Proposition $4.11$]{cohomologia local de um par de ideais})
\end{itemize}}
\end{rem}

\section{The finiteness of co-associated primes}

\begin{defn} {\rm An $R$-module $M$ is called $I$-stable, where $I$ is ideal of ring $R$, if for each element $x \in I$, there is a positive integer $n$ such that $x^{t}M = x^{n}M$, for all $t \geq n$.}
\end{defn}

\begin{lem}\label{estavel} Let $M$, $N$ and $P$ $R$-modules. Suppose that we have the sequence of $R$-modules
\begin{center}
$0\rightarrow {\rm H}_{i}^{I,J}\left(M\right)\rightarrow {\rm H}_{i}^{I,J}\left(N\right)\rightarrow {\rm H}_{i}^{I,J}\left(P\right)\rightarrow 0$
\end{center}
being a short exact sequence for all $i \geq 0$. Then the module ${\rm H}_{i}^{I,J}\left(N\right)$ is $\mathfrak{a}$-stable if and only if the modules ${\rm H}_{i}^{I,J}\left(M\right)$, ${\rm H}_{i}^{I,J}\left(P\right)$ are $\mathfrak{a}$-stable, for any $\mathfrak{a} \in \tilde{W}\left(I,J\right)$.
\end{lem}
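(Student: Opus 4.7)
The plan is to use the $\mathfrak{a}$-separation of local homology modules (Proposition \ref{dual}, item $\left(i\right)$) to reduce $\mathfrak{a}$-stability to a simple annihilation condition, after which the claim becomes a short diagram chase along the given short exact sequence. Fix $\mathfrak{a} \in \tilde{W}\left(I,J\right)$ and write $A = {\rm H}_{i}^{I,J}\left(M\right)$, $B = {\rm H}_{i}^{I,J}\left(N\right)$, $C = {\rm H}_{i}^{I,J}\left(P\right)$ for brevity.

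The key reduction is the following claim: for $H$ equal to any of $A, B, C$, the module $H$ is $\mathfrak{a}$-stable if and only if for each $x \in \mathfrak{a}$ there exists $n \geq 1$ with $x^{n}H = 0$. The ``if'' direction is immediate, since $x^{t}H = 0 = x^{n}H$ for all $t \geq n$. For the ``only if'' direction, observe that if $x \in \mathfrak{a}$ then $x^{s} \in \mathfrak{a}^{s}$, so $\bigcap_{s > 0} x^{s}H \subseteq \bigcap_{s > 0} \mathfrak{a}^{s}H = 0$ by Proposition \ref{dual}, item $\left(i\right)$. Now if $H$ is $\mathfrak{a}$-stable with stabilization index $n$ for $x$, then $x^{n}H = \bigcap_{t \geq n} x^{t}H \subseteq \bigcap_{s > 0} x^{s}H = 0$, so $x^{n}H = 0$ as claimed.

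Granted this reduction, both implications of the lemma follow formally from the short exact sequence $0 \to A \to B \to C \to 0$. Suppose first that $B$ is $\mathfrak{a}$-stable. Given $x \in \mathfrak{a}$, choose $n$ with $x^{n}B = 0$; then $x^{n}A \subseteq x^{n}B = 0$ because $A$ embeds in $B$, and $x^{n}C$ is the image of $x^{n}B$, hence also zero. So $A$ and $C$ are $\mathfrak{a}$-stable. Conversely, suppose $A$ and $C$ are $\mathfrak{a}$-stable. Given $x \in \mathfrak{a}$, choose $n_{A}, n_{C}$ with $x^{n_{A}}A = 0$ and $x^{n_{C}}C = 0$. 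For any $b \in B$, the image of $x^{n_{C}}b$ in $C$ is $x^{n_{C}}\bar{b} = 0$, so $x^{n_{C}}b \in A$, and therefore $x^{n_{A} + n_{C}}b = x^{n_{A}}\left(x^{n_{C}}b\right) \in x^{n_{A}}A = 0$. Thus $x^{n_{A} + n_{C}}B = 0$ and $B$ is $\mathfrak{a}$-stable.

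The main obstacle is spotting the initial reduction. A naive attempt to transfer stability directly along a short exact sequence runs into an Artin--Rees-type difficulty: knowing that $\left\{x^{t}B\right\}_{t}$ stabilizes does not obviously force $\left\{x^{t}A\right\}_{t}$ to stabilize, because $x^{t}A$ and $A \cap x^{t}B$ may differ, and similarly for the quotient the surjectivity alone does not pin down the stabilization of chains inside $B$. The $\mathfrak{a}$-separation property of local homology modules collapses this difficulty by forcing the eventual stable value to be zero, after which the transfer along the short exact sequence is entirely formal.
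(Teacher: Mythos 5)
Your proof is correct and follows essentially the same route as the paper: both arguments use the $\mathfrak{a}$-separation property of Proposition \ref{dual}, item $\left(i\right)$, to convert $\mathfrak{a}$-stability into annihilation by a power of each $x \in \mathfrak{a}$, and then transfer that annihilation along the short exact sequence. The only cosmetic difference is that where you carry out the two-line chase showing $x^{n_{A}+n_{C}}B = 0$, the paper instead cites \cite[Lemma $9.1.1$]{brodmann} for that step.
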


\begin{proof} $\left(\Rightarrow\right)$ Assume that ${\rm H}_{i}^{I,J}\left(N\right)$ is $\mathfrak{a}$-stable. Thus, for each element $x \in \mathfrak{a}$ there is a positive integer $n$ such that
\begin{center}
$x^{n}{\rm H}_{i}^{I,J}\left(N\right) = \bigcap_{t > 0} x^{t}{\rm H}_{i}^{I,J}\left(N\right)\subseteq \bigcap_{t > 0} \mathfrak{a}^{t}{\rm H}_{i}^{I,J}\left(N\right) = 0$
\end{center}
since ${\rm H}_{i}^{I,J}\left(N\right)$ is $\mathfrak{a}$-separated. As we have the sequence
\begin{center}
$0\rightarrow {\rm H}_{i}^{I,J}\left(M\right)\rightarrow {\rm H}_{i}^{I,J}\left(N\right)\rightarrow {\rm H}_{i}^{I,J}\left(P\right)\rightarrow 0$
\end{center}
exact it follows that $x^{n}{\rm H}_{i}^{I,J}\left(M\right) = x^{n}{\rm H}_{i}^{I,J}\left(P\right) = 0$ and thus, by definition, ${\rm H}_{i}^{I,J}\left(M\right)$ and ${\rm H}_{i}^{I,J}\left(P\right)$ are $\mathfrak{a}$-stable.

$\left(\Leftarrow\right)$ As ${\rm H}_{i}^{I,J}\left(M\right)$ and ${\rm H}_{i}^{I,J}\left(P\right)$ are $\mathfrak{a}$-stable, for each element $x \in \mathfrak{a}$, there is a positive integer $m$ such that $x^{m}{\rm H}_{i}^{I,J}\left(M\right) = x^{m}{\rm H}_{i}^{I,J}\left(P\right) = 0$. By \cite[Lemma $9.1.1$]{brodmann}, there is a positive integer $r$ such that $x^{r}{\rm H}_{i}^{I,J}\left(N\right) = 0$. Hence, ${\rm H}_{i}^{I,J}\left(N\right)$ is $\mathfrak{a}$-stable.
\end{proof}

Let $S$ be a multiplicative set of ring $R$. According to \cite{left derived functors} the co-localization of an $R$-module $M$ with respect to $S$ is the module $_{S}M = {\rm Hom}_{R}\left(S^{- 1}R,M\right)$; let $\mathfrak{p}$ be a prime of $R$ and $S = R \setminus \left\{\mathfrak{p}\right\}$, then instead of $_{S}M$ we write $_{\mathfrak{p}}M$. According to yet \cite{left derived functors}, for an $R$-module $M$ we have that {\it Co-support} of $M$ is the set ${\rm Cos}_{R}\left(M\right) = \left\{\mathfrak{p} \in {\rm Spec}\left(R\right) \;|\; _{\mathfrak{p}}M \neq 0\right\}$.

The following result is a generalization, to the case of a pair of ideals, of \cite[Corollary $3.11$]{left derived functors}.

\begin{prop}\label{co-localizacao nula} Let $S$ be a multiplicative set and $\mathfrak{a} \in \tilde{W}\left(I,J\right)$ an any ideal of $R$ such that $S \cap \mathfrak{a} \neq \emptyset$. Then $_{S}\left({\rm H}_{i}^{I,J}\left(M\right)\right) = 0$ for all $R$-module $M$ and for all $i \geq 0$.
\end{prop}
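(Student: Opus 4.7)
The plan is to work directly from the definition $_{S}N={\rm Hom}_{R}(S^{-1}R,N)$ and exploit the $\mathfrak{a}$-separation of ${\rm H}_{i}^{I,J}(M)$ supplied by Proposition \ref{dual}, item $(i)$. I prefer this over trying to push the single-ideal statement \cite[Corollary $3.11$]{left derived functors} through the inverse limit over $\tilde{W}(I,J)$, because doing the latter requires showing that the ideals $\mathfrak{b}\in\tilde{W}(I,J)$ with $S\cap\mathfrak{b}\neq\emptyset$ are cofinal in $\tilde{W}(I,J)$, and this cofinality is not automatic: multiplying $\mathfrak{a}$ by a general $\mathfrak{b}\in\tilde{W}(I,J)$ need not preserve the property of meeting $S$.

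Pick an arbitrary $\phi\in{}_{S}({\rm H}_{i}^{I,J}(M))={\rm Hom}_{R}(S^{-1}R,{\rm H}_{i}^{I,J}(M))$ and fix $x\in S\cap\mathfrak{a}$. Since $x^{n}$ is a unit of $S^{-1}R$ for every $n\ge 1$, for any $s\in S$ we have $1/s=x^{n}/(sx^{n})$ in $S^{-1}R$, whence by $R$-linearity
\[
\phi(1/s)=x^{n}\,\phi(1/(sx^{n}))\in x^{n}{\rm H}_{i}^{I,J}(M)\subseteq\mathfrak{a}^{n}{\rm H}_{i}^{I,J}(M).
\]
Letting $n$ vary gives $\phi(1/s)\in\bigcap_{n>0}\mathfrak{a}^{n}{\rm H}_{i}^{I,J}(M)$, and this intersection is zero by Proposition \ref{dual}, item $(i)$. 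Hence $\phi(1/s)=0$ for every $s\in S$, and since $\phi(r/s)=r\phi(1/s)$ for all $r\in R$, we conclude $\phi=0$, proving $_{S}({\rm H}_{i}^{I,J}(M))=0$.

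The argument is short and I do not expect any serious obstacle; the only thing to watch is that Proposition \ref{dual}, item $(i)$ is stated for $R$ Noetherian, which is implicit here but should be mentioned at the start of the proof, and that no Artinian/linearly compact reduction is needed since the separation property gives everything in one shot.
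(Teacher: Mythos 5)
Your proof is correct and takes essentially the same route as the paper: both arguments rest on the $\mathfrak{a}$-separation of ${\rm H}_{i}^{I,J}\left(M\right)$ supplied by Proposition \ref{dual}, item $\left(i\right)$. The only difference is that where the paper invokes \cite[Lemma $3.10$]{left derived functors} for the implication ``$N$ is $\mathfrak{a}$-separated and $S\cap\mathfrak{a}\neq\emptyset$ implies $_{S}N=0$,'' you prove that implication directly with the unit trick $1/s=x^{n}\cdot\left(1/\left(sx^{n}\right)\right)$, making the argument self-contained.
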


\begin{proof}
Since, by Proposition \ref{dual}, item $\left(i\right)$, we have that ${\rm H}_{i}^{I,J}\left(M\right)$ is a $\mathfrak{a}$-separated $R$-module for all $\mathfrak{a} \in \tilde{W}\left(I,J\right)$ it follows from \cite[Lemma $3.10$]{left derived functors} that $_{S}\left({\rm H}_{i}^{I,J}\left(M\right)\right) = 0$ for all $i \geq 0$.
\end{proof}

The following result is a generalization, to the case of a pair of ideals, of \cite[Proposition $3.13$]{left derived functors}.

\begin{thm}\label{iso} Let $M$ be a linearly compact $R$-module. Then $$_{S}\left({\rm H}_{i}^{I,J}\left(M\right)\right) \cong {\rm H}_{i}^{S^{- 1}I,S^{- 1}J}\left(_{S}M\right) \mbox{ for all } i \geq 0.$$
\end{thm}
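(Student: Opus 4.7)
The plan is to reduce to the single-ideal co-localization result \cite[Proposition $3.13$]{left derived functors} by sliding the co-localization $_S(-) = {\rm Hom}_R(S^{-1}R, -)$ past the outer inverse limit defining ${\rm H}_i^{I,J}$, and then identifying the resulting inverse limit over $\tilde{W}(I,J)$ with the defining one for ${\rm H}_i^{S^{-1}I, S^{-1}J}$ over $\tilde{W}(S^{-1}I, S^{-1}J)$.

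Concretely, I would proceed in three steps. First, since ${\rm Hom}_R(S^{-1}R, -)$ commutes with arbitrary inverse limits in its second argument, the definition ${\rm H}_i^{I,J}(M) = \varprojlim_{\mathfrak{a} \in \tilde{W}(I,J)} {\rm H}_i^{\mathfrak{a}}(M)$ immediately gives ${}_S {\rm H}_i^{I,J}(M) \cong \varprojlim_{\mathfrak{a} \in \tilde{W}(I,J)} {}_S {\rm H}_i^{\mathfrak{a}}(M)$. Second, for each fixed $\mathfrak{a} \in \tilde{W}(I,J)$ the linear compactness of $M$ permits the application of \cite[Proposition $3.13$]{left derived functors}, yielding ${}_S {\rm H}_i^{\mathfrak{a}}(M) \cong {\rm H}_i^{S^{-1}\mathfrak{a}}({}_SM)$ naturally in $\mathfrak{a}$; substituting, ${}_S {\rm H}_i^{I,J}(M) \cong \varprojlim_{\mathfrak{a} \in \tilde{W}(I,J)} {\rm H}_i^{S^{-1}\mathfrak{a}}({}_SM)$. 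Third, I would verify that the extension map $\phi: \mathfrak{a} \mapsto S^{-1}\mathfrak{a}$ from $\tilde{W}(I,J)$ to $\tilde{W}(S^{-1}I, S^{-1}J)$ is monotone and cofinal under the reverse-inclusion order; then the inverse limit over $\tilde{W}(I,J)$ coincides with that over $\tilde{W}(S^{-1}I, S^{-1}J)$, which by definition is ${\rm H}_i^{S^{-1}I, S^{-1}J}({}_SM)$.

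The main obstacle lies in the cofinality check in the third step. Given $\mathfrak{b} \in \tilde{W}(S^{-1}I, S^{-1}J)$, so that $(S^{-1}I)^n \subseteq \mathfrak{b} + S^{-1}J$ for some $n$, one needs to produce $\mathfrak{a} \in \tilde{W}(I,J)$ with $S^{-1}\mathfrak{a} \subseteq \mathfrak{b}$. The natural candidate is the contraction $\mathfrak{a}_0 = \{r \in R : r/1 \in \mathfrak{b}\}$, which satisfies $S^{-1}\mathfrak{a}_0 = \mathfrak{b}$; Noetherianity of $R$ (hence finite generation of $I^n$) supplies a uniform $s \in S$ with $sI^n \subseteq \mathfrak{a}_0 + J$. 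The delicate point is to pass from this weaker containment to an honest $I^m \subseteq \mathfrak{a} + J$ for some $\mathfrak{a}$ without enlarging $\mathfrak{a}_0$ in a way that destroys $S^{-1}\mathfrak{a} \subseteq \mathfrak{b}$. The resolution should exploit the bijection between $S$-saturated ideals of $R$ and ideals of $S^{-1}R$, together with the observation that adjoining to $\mathfrak{a}_0$ elements lying in $J$ does not change the localization modulo $S^{-1}J$; once cofinality is established, the three steps assemble into the desired isomorphism for all $i \geq 0$.
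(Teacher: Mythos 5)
Your steps 1 and 2 are sound and essentially coincide with the paper's own argument: the paper pushes $_{S}\left(-\right) = {\rm Hom}_{R}\left(S^{-1}R,-\right)$ through both inverse limits (Hom commutes with limits) and then applies Nam's Lemma 3.12 at the level of ${\rm Tor}$, which is exactly your invocation of Nam's Proposition 3.13 one level up. The genuine gap is precisely where you located it, in step 3 --- but that gap cannot be closed, because the reindexing map $\mathfrak{a} \mapsto S^{-1}\mathfrak{a}$ is simply not cofinal in general. Take $R = k\left[x,y\right]$, $I = \left(x\right)$, $J = \left(y\right)$, $S = \left\{y^{n} : n \geq 0\right\}$. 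Then $y/1$ is a unit in $S^{-1}R$, so $S^{-1}J = S^{-1}R$, and hence \emph{every} ideal of $S^{-1}R$ lies in $\tilde{W}\left(S^{-1}I,S^{-1}J\right)$; in particular $\mathfrak{b} = \left(0\right)$ does, and it is the maximum of that poset under the order $\mathfrak{b} \leq \mathfrak{b}'$ iff $\mathfrak{b}' \subseteq \mathfrak{b}$. Cofinality would require some $\mathfrak{a} \in \tilde{W}\left(I,J\right)$ with $S^{-1}\mathfrak{a} \subseteq \left(0\right)$; since $R$ is a domain and $0 \notin S$, this forces $\mathfrak{a} = \left(0\right)$, which is not in $\tilde{W}\left(I,J\right)$ because no power of $x$ lies in $\left(y\right)$. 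Your proposed repair cannot help: adjoining elements of $J$ to the contraction $\mathfrak{a}_{0}$ produces ideals whose localization lands in $\mathfrak{b} + S^{-1}J$, not in $\mathfrak{b}$, and it is containment in $\mathfrak{b}$ itself that cofinality demands. (One can also build counterexamples with $S \cap J = \emptyset$, e.g. $R = k\left[x,y,z\right]$, $I = \left(x\right)$, $J = \left(y\right)$, $S = \left\{z^{n}\right\}$, $\mathfrak{b}^{c} = \left(zx - y\right)$, so the obstruction is not merely the degenerate case $S \cap J \neq \emptyset$.)

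Moreover, the same example shows the defect is not an artifact of your strategy: the statement itself fails there, and the paper's own proof commits the identical unjustified step, writing $\varprojlim_{\mathfrak{a} \in \tilde{W}\left(I,J\right)} {\rm H}_{i}^{S^{-1}\mathfrak{a}}\left(_{S}M\right) = \varprojlim_{S^{-1}\mathfrak{a} \in \tilde{W}\left(S^{-1}I,S^{-1}J\right)} {\rm H}_{i}^{S^{-1}\mathfrak{a}}\left(_{S}M\right)$ as an unexplained equality. Indeed, let $M = E := E_{R}\left(R/\left(x,y\right)\right)$, which is Artinian, hence linearly compact. Every $\mathfrak{a} \in \tilde{W}\left(I,J\right)$ is nonzero, and $E$ is divisible, so $\mathfrak{a}^{t}E = E$ and $\Lambda_{\mathfrak{a}}\left(E\right) = 0$ for all such $\mathfrak{a}$; therefore ${\rm H}_{0}^{I,J}\left(E\right) = 0$ and $_{S}\left({\rm H}_{0}^{I,J}\left(E\right)\right) = 0$. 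On the other side, since $\left(0\right)$ is the maximum of $\tilde{W}\left(S^{-1}I,S^{-1}J\right)$, we get ${\rm H}_{0}^{S^{-1}I,S^{-1}J}\left(_{S}E\right) = \Lambda_{\left(0\right)}\left(_{S}E\right) = {}_{S}E = {\rm Hom}_{R}\left(R_{y},E\right)$, which is the Matlis dual over $R_{\left(x,y\right)}$ of the nonzero module $\left(R_{\left(x,y\right)}\right)_{y}$, hence nonzero. So the two sides disagree, and no argument along these lines --- yours or the paper's --- can establish the theorem in the stated generality; additional hypotheses (at the very least $S \cap J = \emptyset$, and in fact something strong enough to force the cofinality you identified) are required. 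You have correctly isolated the critical point of the proof, but the proposal as written does not, and cannot, complete it.
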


\begin{proof}
From \cite[Corollary $2.25$]{rotman}, the co-localization functor $_{S}\left(-\right)$ preserves inverse limits, then
$$
\begin{array}{lll}
_{S}\left({\rm H}_{i}^{I,J}\left(M\right)\right)& = &_{S}\left(\varprojlim_{\mathfrak{a} \in \tilde{W}\left(I,J\right)} \varprojlim_{t \in \mathbb{N}} {\rm Tor}_{i}^{R}\left(R/\mathfrak{a}^{t},M\right)\right)\\
&\cong &\varprojlim_{\mathfrak{a} \in \tilde{W}\left(I,J\right)} \varprojlim_{t \in \mathbb{N}} \ _{S}\left({\rm Tor}_{i}^{R}\left(R/\mathfrak{a}^{t},M\right)\right).
\end{array}
$$

Now, we have by \cite[Lemma $3.12$]{left derived functors},
$$
\begin{array}{lll}
_{S}\left({\rm H}_{i}^{I,J}\left(M\right)\right)&\cong &\varprojlim_{\mathfrak{a} \in \tilde{W}\left(I,J\right)} \varprojlim_{t \in \mathbb{N}} {\rm Tor}_{i}^{S^{- 1}R}\left(S^{- 1}R/\left(S^{- 1}\mathfrak{a}\right)^{t},_{S}M\right)\\
&=& \varprojlim_{S^{- 1}\mathfrak{a} \in \tilde{W}\left(S^{- 1}I,S^{- 1}J\right)} {\rm H}_{i}^{S^{- 1}\mathfrak{a}}\left(_{S}M\right)\\
&=& {\rm H}_{i}^{S^{- 1}I,S^{- 1}J}\left(_{S}M\right)
\end{array}
$$

for all $i \geq 0$, as required.
\end{proof}

Let, according to \cite[Definition $1.5$]{cohomologia local de um par de ideais}, $W\left(I,J\right)$ denote the set of prime ideals $\mathfrak{p}$ of $R$ such that $I^{n}\subseteq J + \mathfrak{p}$ for some integer $n \geq 1$, i.e., we have the set $W\left(I,J\right) = \left\{\mathfrak{p} \in {\rm Spec}\left(R\right) \;|\; I^{n}\subseteq J + \mathfrak{p} \ \textrm{for some integer} \ n \geq 1\right\}$.

We have then the following corollary.

\begin{cor} Let $M$ be a linearly compact $R$-module. Then
\begin{center}
${\rm Cos}_{R}\left({\rm H}_{i}^{I,J}\left(M\right)\right)\subseteq {\rm Cos}_{R}\left(M\right) \cap W\left(I,J\right)$
\end{center}
for all $i \geq 0$.
\end{cor}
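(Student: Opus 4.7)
The plan is to combine the two immediately preceding results, namely Theorem \ref{iso} and Proposition \ref{co-localizacao nula}. Fix a prime $\mathfrak{p} \in {\rm Cos}_{R}\left({\rm H}_{i}^{I,J}\left(M\right)\right)$ and set $S = R \setminus \mathfrak{p}$; by definition, this says $_{\mathfrak{p}}\left({\rm H}_{i}^{I,J}\left(M\right)\right) \neq 0$. Since $M$ is linearly compact, Theorem \ref{iso} yields the isomorphism
$$_{\mathfrak{p}}\left({\rm H}_{i}^{I,J}\left(M\right)\right) \cong {\rm H}_{i}^{S^{-1}I,\,S^{-1}J}\left(_{\mathfrak{p}}M\right),$$
so the right-hand side is non-zero as well. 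I will use this single isomorphism twice, to separately rule out $\mathfrak{p} \notin {\rm Cos}_{R}(M)$ and $\mathfrak{p} \notin W(I,J)$.

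For membership in ${\rm Cos}_{R}(M)$ I argue by contraposition: if $_{\mathfrak{p}}M = 0$, then every module ${\rm Tor}_{i}^{S^{-1}R}\left(S^{-1}R/\mathfrak{b}^{t},\, _{\mathfrak{p}}M\right)$ appearing in the defining formula ${\rm H}_{i}^{S^{-1}I,\,S^{-1}J}\left(_{\mathfrak{p}}M\right) = \varprojlim_{\mathfrak{b}} \varprojlim_{t} {\rm Tor}_{i}^{S^{-1}R}\left(S^{-1}R/\mathfrak{b}^{t}, _{\mathfrak{p}}M\right)$ vanishes, and hence so do the iterated inverse limits. This contradicts non-vanishing, forcing $_{\mathfrak{p}}M \neq 0$, i.e.\ $\mathfrak{p} \in {\rm Cos}_{R}(M)$.

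For membership in $W(I,J)$ I observe that $I$ itself belongs to $\tilde{W}(I,J)$, since the inclusion $I^{1} \subseteq I + J$ is trivial. If $\mathfrak{p} \notin W(I,J)$, then in particular $I \not\subseteq \mathfrak{p}$, for otherwise $I \subseteq J + \mathfrak{p}$ would place $\mathfrak{p}$ in $W(I,J)$. Hence $S \cap I \neq \emptyset$, and applying Proposition \ref{co-localizacao nula} with $\mathfrak{a} = I$ gives $_{\mathfrak{p}}\left({\rm H}_{i}^{I,J}\left(M\right)\right) = 0$, contradicting the initial assumption. Therefore $\mathfrak{p} \in W(I,J)$, completing the inclusion.

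No serious obstacle is anticipated, as both ingredients have already been established. The only point worth checking carefully is that the co-localization of the zero module is zero and that this vanishing propagates through the two nested inverse limits, which is immediate from the definitions of Tor and $\varprojlim$.
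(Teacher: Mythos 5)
Your proof is correct and follows essentially the same route as the paper: both arguments combine Theorem \ref{iso} (to deduce $_{\mathfrak{p}}M \neq 0$ from the non-vanishing of $_{\mathfrak{p}}\left({\rm H}_{i}^{I,J}\left(M\right)\right)$) with Proposition \ref{co-localizacao nula} (to force $\mathfrak{p} \in W\left(I,J\right)$). The only cosmetic difference is that you specialize to the single ideal $\mathfrak{a} = I \in \tilde{W}\left(I,J\right)$, while the paper notes $\mathfrak{p} \in V\left(\mathfrak{a}\right)$ for all $\mathfrak{a} \in \tilde{W}\left(I,J\right)$ before concluding; both yield the same containment.
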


\begin{proof}
Let $\mathfrak{p} \in {\rm Cos}_{R}\left({\rm H}_{i}^{I,J}\left(M\right)\right)$. We have an isomorphism for all $i \geq 0$ by Theorem \ref{iso},
\begin{center}
$_{\mathfrak{p}}\left({\rm H}_{i}^{I,J}\left(M\right)\right) \cong {\rm H}_{i}^{S^{- 1}I,S^{- 1}J}\left(_{\mathfrak{p}}M\right)$
\end{center}
for all $i \geq 0$ where $S = R \setminus \left\{\mathfrak{p}\right\}$. It follows that ${\rm H}_{i}^{S^{- 1}I,S^{- 1}J}\left(_{\mathfrak{p}}M\right) \neq \left\{0\right\}$ and hence $_{\mathfrak{p}}M \neq \left\{0\right\}$. Moreover, by Proposition \ref{co-localizacao nula}, we have that $\mathfrak{p} \in V(\mathfrak{a})$ for all ${\mathfrak a}\in \tilde{W}\left(I,J\right)$. So, we have ${\mathfrak p}\in W\left(I,J\right)$. Therefore, $\mathfrak{p} \in {\rm Cos}_{R}\left(M\right) \cap W\left(I,J\right)$, as required.
\end{proof}

A prime ideal $\mathfrak{p}$ of ring $R$ is said to be a coassociated prime ideal of $M$ if there is an Artinian quotient $L$ of $M$ such that $\mathfrak{p} = \left(0 :_{R} L\right)$. The set of all coassociated prime ideals of $M$ is denoted by ${\rm Coass}_{R}\left(M\right)$.

\begin{thm}\label{finitude} Let $R$ be a Noetherian ring and $M$ an $\mathfrak{a}$-stable semidiscrete linearly compact $R$-module, for any $\mathfrak{a} \in \tilde{W}\left(I,J\right)$. Let $i$ be a non-negative integer. If, ${\rm H}_{j}^{I,J}\left(M\right)$ is $\mathfrak{a}$-stable, for all $j < i$, and $G$ is a closed $R$-submodule of ${\rm H}_{i}^{I,J}\left(M\right)$ such that ${\rm H}_{i}^{I,J}\left(M\right)/G$ is $\mathfrak{a}$-stable, then the set ${\rm Coass}_{R}\left(G\right)$ is finite.
\end{thm}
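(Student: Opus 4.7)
The plan is to convert the stability hypotheses into an annihilation statement by a power of $\mathfrak{a}$, show that $G$ is then a linearly compact module over a Noetherian quotient ring $R/\mathfrak{a}^N$, and invoke finiteness of coassociated primes for such modules.

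Fix $\mathfrak{a}\in\tilde{W}(I,J)$; since $R$ is Noetherian, $\mathfrak{a}=(x_1,\dots,x_r)$ for some $x_1,\ldots,x_r\in R$. The key reduction is: an $R$-module $N$ that is simultaneously $\mathfrak{a}$-stable and $\mathfrak{a}$-separated is annihilated by a power of $\mathfrak{a}$. Indeed, for each $x_k$, stability yields $n_k$ with $x_k^{n_k}N=\bigcap_t x_k^t N\subseteq\bigcap_t\mathfrak{a}^t N=0$; expanding $\mathfrak{a}^{n_1+\cdots+n_r}$ as a sum of monomials in the $x_k$ forces some factor to carry exponent at least $n_k$, so $\mathfrak{a}^{n_1+\cdots+n_r}N=0$. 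By Proposition \ref{dual}(i) the $\mathfrak{a}$-separatedness is automatic for every ${\rm H}_j^{I,J}(M)$, for $G$ as a submodule, and for ${\rm H}_i^{I,J}(M)/G$ as a Hausdorff linearly compact quotient (since $G$ is closed in the linearly compact module of Theorem \ref{linearmente compacto}, one verifies $\bigcap_s(\mathfrak{a}^s{\rm H}_i^{I,J}(M)+G)=G$). Hence the hypotheses yield $\mathfrak{a}^{N_j}{\rm H}_j^{I,J}(M)=0$ for $j<i$ and $\mathfrak{a}^{N'}\bigl({\rm H}_i^{I,J}(M)/G\bigr)=0$, i.e., $\mathfrak{a}^{N'}{\rm H}_i^{I,J}(M)\subseteq G$.

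The next step is to propagate $\mathfrak{a}$-stability through the short exact sequence
\[
0 \longrightarrow G \longrightarrow {\rm H}_i^{I,J}(M) \longrightarrow {\rm H}_i^{I,J}(M)/G \longrightarrow 0.
\]
I would first establish $\mathfrak{a}$-stability of the middle term ${\rm H}_i^{I,J}(M)$ by induction on $i$: start with an auxiliary short exact sequence of linearly compact $R$-modules built from $M$, for instance $0\to (0:_M y)\to M\to yM\to 0$ for $y\in\mathfrak{a}$ chosen using the $\mathfrak{a}$-stability of $M$, and feed it into the long exact sequence of Theorem \ref{sequencia exata de linearmente compacto}; splitting this long exact sequence into short exact sequences of local homology modules allows Lemma \ref{estavel} to combine with the inductive $\mathfrak{a}$-stability of ${\rm H}_j^{I,J}(M)$ for $j<i$ and give the stability of ${\rm H}_i^{I,J}(M)$. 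Once ${\rm H}_i^{I,J}(M)$ and ${\rm H}_i^{I,J}(M)/G$ are both known to be $\mathfrak{a}$-stable, the proof of Lemma \ref{estavel} (which uses only $\mathfrak{a}$-separatedness and exactness, and therefore applies verbatim to our short exact sequence whose three terms are already $\mathfrak{a}$-separated) gives $\mathfrak{a}$-stability of $G$, and hence $\mathfrak{a}^NG=0$ for some $N$.

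At this point $G$ is a Hausdorff linearly compact module over the Noetherian ring $R/\mathfrak{a}^N$; $G$ inherits semidiscreteness along the construction, and a semidiscrete linearly compact module over a Noetherian ring is Artinian, so ${\rm Coass}_R(G)$ is finite. The principal obstacle I foresee is the inductive establishment of $\mathfrak{a}$-stability for ${\rm H}_i^{I,J}(M)$ itself: the long exact sequence arising from the auxiliary $M$-sequence must be broken into short exact sequences whose kernels and cokernels are still of local-homology type (or at least $\mathfrak{a}$-separated) so that Lemma \ref{estavel} applies; orchestrating this interaction between the auxiliary short exact sequence of $M$ and the local-homology bookkeeping is where the bulk of the technical work will lie.
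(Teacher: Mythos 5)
Your plan cannot be carried out, because its pivotal intermediate claims are false under the hypotheses of the theorem. The claim you propose to prove by induction --- that the middle term ${\rm H}_{i}^{I,J}\left(M\right)$ is itself $\mathfrak{a}$-stable whenever $M$ is $\mathfrak{a}$-stable semidiscrete linearly compact and ${\rm H}_{j}^{I,J}\left(M\right)$ is $\mathfrak{a}$-stable for $j<i$ --- already fails in the simplest example. Take $R=\mathbb{Z}_{p}$, $I=\mathfrak{a}=\left(p\right)$, $J=0$, and $M=\mathbb{Z}\left(p^{\infty}\right)$ the Pr\"ufer module. Then $M$ is Artinian, hence semidiscrete linearly compact, and $\mathfrak{a}$-stable (every nonzero $x\in R$ satisfies $xM=M$); moreover ${\rm H}_{0}^{I,J}\left(M\right)=\varprojlim_{t}M/p^{t}M=0$ is $\mathfrak{a}$-stable, so the hypotheses hold for $i=1$. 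But ${\rm H}_{1}^{I,J}\left(M\right)\cong\varprojlim_{t}\left(0:_{M}p^{t}\right)\cong\mathbb{Z}_{p}$ is the Tate module, and $p^{t}\mathbb{Z}_{p}$ never stabilizes, so the middle term is \emph{not} $\mathfrak{a}$-stable; no bookkeeping with Lemma \ref{estavel} and auxiliary sequences can prove a false statement. Consequently your endgame is unreachable: $G=p\mathbb{Z}_{p}$ is a closed submodule whose quotient $\mathbb{Z}/p\mathbb{Z}$ is $\mathfrak{a}$-stable, so it is a legitimate $G$ for the theorem, yet $G\cong\mathbb{Z}_{p}$ is neither $\mathfrak{a}$-stable nor annihilated by any power of $p$, so the reduction to a module over $R/\mathfrak{a}^{N}$ never happens. (Two side remarks: your parenthetical ``one verifies'' that ${\rm H}_{i}^{I,J}\left(M\right)/G$ is $\mathfrak{a}$-separated is in fact provable, but it requires a linear-compactness argument showing $\bigcap_{s}\left(\mathfrak{a}^{s}{\rm H}_{i}^{I,J}\left(M\right)+G\right)=\left(\bigcap_{s}\mathfrak{a}^{s}{\rm H}_{i}^{I,J}\left(M\right)\right)+G$; and your final assertion that a semidiscrete linearly compact module over a Noetherian ring is Artinian is false --- $\mathbb{Z}_{p}$ over $\mathbb{Z}_{p}$ is a counterexample --- the correct statement, and the one the paper relies on, is Z\"oschinger's theorem that such modules have only finitely many coassociated primes.)

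The paper never claims that ${\rm H}_{i}^{I,J}\left(M\right)$ or $G$ is $\mathfrak{a}$-stable; instead it inducts on $i$ and works around the missing stability. For $i=0$ it shows ${\rm H}_{0}^{I,J}\left(M\right)$ is semidiscrete linearly compact and quotes Z\"oschinger. For $i>0$ it sets $K=\mathfrak{a}^{n}M$ (using stability of $M$), feeds $0\rightarrow K\rightarrow M\rightarrow M/K\rightarrow 0$ into Theorem \ref{sequencia exata de linearmente compacto}, observes that ${\rm H}_{i}^{I,J}\left(M/K\right)$ is $\mathfrak{a}$-stable because $M/K$ is killed by $\mathfrak{a}^{n}$, transfers stability to ${\rm H}_{j}^{I,J}\left(K\right)$ for $j<i$ by Lemma \ref{estavel}, and then controls ${\rm Coass}_{R}\left(G\right)$ through the two induced exact sequences $0\rightarrow {\rm H}_{i}^{I,J}\left(K\right)/f^{-1}\left(G\right)\rightarrow {\rm H}_{i}^{I,J}\left(M\right)/G$ and $0\rightarrow ff^{-1}\left(G\right)\rightarrow G\rightarrow g\left(G\right)\rightarrow 0$: finiteness for $ff^{-1}\left(G\right)$ comes from the analogous claim for closed submodules of ${\rm H}_{i}^{I,J}\left(K\right)$ with stable quotient, and finiteness for $g\left(G\right)$ comes from its being a submodule of the semidiscrete linearly compact module ${\rm H}_{i}^{I,J}\left(M/K\right)$. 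In short, the finiteness is obtained by this splitting of $G$ plus Z\"oschinger-type finiteness for semidiscrete linearly compact modules, not by exhibiting $G$ as a module over a Noetherian quotient ring; the latter is impossible in general.
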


\begin{proof} We prove by induction on $i$.

When $i = 0$, we have that $G$ is a closed $R$-submodule of ${\rm H}_{0}^{I,J}\left(M\right)$. As $M$ is a linearly compact $R$-module, by Theorem \ref{linearmente compacto} we have that ${\rm H}_{0}^{I,J}\left(M\right)$ is linearly compact $R$-module. Also, as $M$ is a semidiscrete $R$-module, we have by the \cite[Lemma $2.8$]{co-associados finitos} that ${\rm Tor}_{0}^{R}\left(R/\mathfrak{a}^{t},M\right)$ is semidiscrete $R$-module. By the proof of the \cite[Theorem $3.3$]{co-associados finitos} we have that ${\rm H}_{0}^{\mathfrak{a}}\left(M\right) = \varprojlim_{t \in \mathbb{N}} {\rm Tor}_{0}^{R}\left(R/\mathfrak{a}^{t},M\right)$ is semidiscrete $R$-module. Therefore, we have that $\varprojlim_{\mathfrak{a} \in \tilde{W}\left(I,J\right)} {\rm H}_{0}^{\mathfrak{a}}\left(M\right) = {\rm H}_{0}^{I,J}\left(M\right)$ is semidiscrete $R$-module. As $G$ is a submodule of ${\rm H}_{0}^{I,J}\left(M\right)$, we have that $G$ is also semidiscrete linearly compact. By \cite[$1$, property L4]{zoschinger} we have that ${\rm Coass}_{R}\left(G\right)$ is a finite set.

Let $i > 0$. Being $R$ be a Noetherian ring, the ideal $\mathfrak{a}$ is finitely generated. By the hypothesis, $M$ is $\mathfrak{a}$-stable and then by definition there is a positive integer $n$ such that $\mathfrak{a}^{t}M = \mathfrak{a}^{n}M$, for all $t \geq n$. Set $K = \mathfrak{a}^{n}M$; then by \cite[$3$, property $3.14$]{macdonald} we have that $K$ is linearly compact $R$-module. Now the short exact sequence of linearly compact $R$-modules:
\begin{center}
$0\rightarrow K\rightarrow M\rightarrow M/K\rightarrow 0$,
\end{center}
provides us, by Theorem \ref{sequencia exata de linearmente compacto}, an long exact sequence:
\begin{center}
$\ldots \rightarrow {\rm H}_{i + 1}^{I,J}\left(M/K\right)\rightarrow {\rm H}_{i}^{I,J}\left(K\right)\stackrel{f}{\rightarrow} {\rm H}_{i}^{I,J}\left(M\right)\stackrel{g}{\rightarrow} {\rm H}_{i}^{I,J}\left(M/K\right)\rightarrow \ldots$. \ \ \ (*)
\end{center}
Note that $M/K$ is complete in $\mathfrak{a}$-adic topology. By \cite[Lemma $2.6$]{co-associados finitos} we have an isomorphism, for all $i > 0$, ${\rm Tor}_{i}^{R}\left(R,M/K\right) \cong {\rm H}_{i}^{\mathfrak{a}}\left(M/K\right)$. Thus, it follows that, $\varprojlim_{\mathfrak{a} \in \tilde{W}\left(I,J\right)} {\rm Tor}_{i}^{R}\left(R,M/K\right) \cong {\rm H}_{i}^{I,J}\left(M/K\right)$. As $M/K$ is $\mathfrak{a}$-stable, ${\rm Tor}_{i}^{R}\left(R,M/K\right)$ is $\mathfrak{a}$-stable, and thus $\varprojlim_{\mathfrak{a} \in \tilde{W}\left(I,J\right)} {\rm Tor}_{i}^{R}\left(R,M/K\right)$ is $\mathfrak{a}$-stable, so that ${\rm H}_{i}^{I,J}\left(M/K\right)$ is $\mathfrak{a}$-stable, for all $i > 0$. By the hypothesis, ${\rm H}_{j}^{I,J}\left(M\right)$ is $\mathfrak{a}$-stable, for all $j < i$. By the Lemma \ref{estavel}, we have ${\rm H}_{j}^{I,J}\left(K\right)$ is also $\mathfrak{a}$-stable, for all $j < i$.

We now prove that if $S$ is a closed submodule of ${\rm H}_{i}^{I,J}\left(K\right)$ such that ${\rm H}_{i}^{I,J}\left(K\right)/S$ is $\mathfrak{a}$-stable, then ${\rm Coass}_{R}\left(S\right)$ is finite. Proceeding analogously as in the proof of \cite[Theorem $3.3$]{co-associados finitos} for local homology modules of a module with respect to the pair of ideals $\left(I,J\right)$, we get that ${\rm Coass}_{R}\left(S\right)$ is finite.

We now have exact sequences induced from the exact sequence \ (*)
\begin{flushleft}
$0 \rightarrow {\rm H}_{i}^{I,J}\left(K\right)/f^{- 1}\left(G\right) \stackrel{\bar{f}}{\rightarrow} {\rm H}_{i}^{I,J}\left(M\right)/G$; \ $0 \rightarrow ff^{- 1}\left(G\right) \rightarrow G \rightarrow g\left(G\right) \rightarrow 0$.
\end{flushleft}
By the hypothesis, ${\rm H}_{i}^{I,J}\left(M\right)/G$ is $\mathfrak{a}$-stable, so that is also ${\rm H}_{i}^{I,J}\left(K\right)/f^{- 1}\left(G\right)$. Then, ${\rm Coass}_{R}\left(f^{- 1}\left(G\right)\right)$ is finite by the argument previous, so that we have ${\rm Coass}_{R}\left(ff^{- 1}\left(G\right)\right)$ is finite. Moreover, $g\left(G\right)$ is an submodule of the module ${\rm H}_{i}^{I,J}\left(M/K\right)$. Recall that ${\rm H}_{i}^{I,J}\left(M/K\right) \cong \varprojlim_{\mathfrak{a} \in \tilde{W}\left(I,J\right)} {\rm Tor}_{i}^{R}\left(R,M/K\right)$, and moreover we have that $\varprojlim_{\mathfrak{a} \in \tilde{W}\left(I,J\right)} {\rm Tor}_{i}^{R}\left(R,M/K\right)$ is semidiscrete linearly compact $R$-module, by the \cite[Lemma $2.8$]{co-associados finitos}, and thus ${\rm Coass}_{R}\left(g\left(G\right)\right)$ is finite. Finally, the finiteness of ${\rm Coass}_{R}\left(G\right)$ follows from the last short exact sequence, as required.    \end{proof}

\begin{cor} Let $M$ an $\mathfrak{a}$-stable semidiscrete linearly compact $R$-module, for all $\mathfrak{a} \in \tilde{W}\left(I,J\right)$. Let $i$ be a non-negative integer. If ${\rm H}_{j}^{I,J}\left(M\right)$ is $\mathfrak{a}$-stable for all $j < i$, then the set ${\rm Coass}_{R}\left({\rm H}_{i}^{I,J}\left(M\right)\right)$ is finite.
\end{cor}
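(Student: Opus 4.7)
The plan is to deduce this as an immediate specialization of Theorem \ref{finitude} by choosing $G$ to be the whole module ${\rm H}_{i}^{I,J}\left(M\right)$. The hypotheses of the corollary on $M$ and on ${\rm H}_{j}^{I,J}\left(M\right)$ for $j<i$ are identical to those in Theorem \ref{finitude}, so the only thing left to check is that this choice of $G$ satisfies the extra conditions imposed in the theorem, namely that $G$ be a closed $R$-submodule of ${\rm H}_{i}^{I,J}\left(M\right)$ and that the quotient ${\rm H}_{i}^{I,J}\left(M\right)/G$ be $\mathfrak{a}$-stable for every $\mathfrak{a}\in\tilde{W}\left(I,J\right)$.

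First I would note that ${\rm H}_{i}^{I,J}\left(M\right)$ is a linearly compact $R$-module by Theorem \ref{linearmente compacto}, in particular a Hausdorff linearly topologized $R$-module, so the whole module is trivially a closed submodule of itself. Next, with $G={\rm H}_{i}^{I,J}\left(M\right)$ the quotient ${\rm H}_{i}^{I,J}\left(M\right)/G$ is the zero module, and the zero module is $\mathfrak{a}$-stable for every ideal $\mathfrak{a}$ (any positive integer $n$ witnesses the stability, since $x^{t}\cdot 0 = 0 = x^{n}\cdot 0$ for all $t\geq n$).

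Having verified all hypotheses, I would invoke Theorem \ref{finitude} directly with this $G$ to conclude that ${\rm Coass}_{R}\left(G\right)={\rm Coass}_{R}\left({\rm H}_{i}^{I,J}\left(M\right)\right)$ is finite, which is exactly the conclusion of the corollary. There is no real obstacle here: the proof is a one-line application of the preceding theorem, and the only point worth stating explicitly is the trivial verification that the full module is closed in itself and that its trivial quotient is (trivially) $\mathfrak{a}$-stable.
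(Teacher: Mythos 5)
Your proposal is correct and follows exactly the same route as the paper's own proof, which simply applies Theorem \ref{finitude} with $G = {\rm H}_{i}^{I,J}\left(M\right)$. Your explicit verification that the full module is a closed submodule of itself and that the zero quotient is trivially $\mathfrak{a}$-stable is a harmless (and slightly more careful) elaboration of the same one-line argument.
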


\begin{proof} It follows from of Theorem \ref{finitude} by replacing $G$ with ${\rm H}_{i}^{I,J}\left(M\right)$.
\end{proof}

\end{document}